\documentclass[a4paper,12pt]{article}

\usepackage{authblk}
\usepackage{parskip}
\usepackage{mathtools}
\usepackage{amssymb}
\usepackage{amsmath}
\usepackage{latexsym}
\usepackage{mathrsfs}  
\usepackage{bbm}       
\usepackage{amsthm}    
\usepackage{relsize}   
\usepackage{graphicx}
\usepackage{thmtools}  
\usepackage{mathrsfs}  
\usepackage{paralist}  
\usepackage{lipsum}    
\usepackage[all]{xy}   
\usepackage{stmaryrd}  

\numberwithin{equation}{section}

\setlength{\textwidth}{16.5cm}
\setlength{\hoffset}{-1.5cm}
\setlength{\textheight}{23cm}	
\setlength{\voffset}{-1cm}

\setlength{\parindent}{10pt}
\addtolength{\parskip}{3mm}


\usepackage{enumitem}  
\setlist{nosep}  
\setitemize[0]{leftmargin=*}
\setenumerate[0]{leftmargin=*}
\setenumerate[1]{label={(\arabic*)}} 


\newcommand{\N}{\mathbb{N}}     
\newcommand{\R}{\mathbb{R}}     
\newcommand{\C}{\mathbb{C}}     
\newcommand{\Prob}{\mathbb{P}}  
\newcommand{\Exp}{\mathbb{E}}   
%
\newcommand{\goth}[1]{\mathfrak{#1}} 
\newcommand{\inner}[2]{\left\langle #1 \, , \, #2 \right\rangle} 
\newcommand{\norm}[1]{\left|\left|#1\right|\right|}              
\newcommand{\triplet}[3]{\left( #1, #2, #3 \right) }             
\newcommand{\ProbSpace}{\triplet{\Omega}{\mathscr{F}}{\Prob}}    
 
\newcommand{\abs}[1]{\left| #1 \right|}                          
\newcommand{\defeq}{\mathrel{\mathop:}=}                         
\newcommand\restr[2]{{
  \left.\kern-\nulldelimiterspace 
  #1 
  \vphantom{\big|} 
  \right|_{#2} 
  }}
  

 

  
\makeatletter
\newsavebox{\@brx}
\newcommand{\llangle}[1][]{\savebox{\@brx}{\(\m@th{#1\langle}\)}%
  \mathopen{\copy\@brx\kern-0.5\wd\@brx\usebox{\@brx}}}
\newcommand{\rrangle}[1][]{\savebox{\@brx}{\(\m@th{#1\rangle}\)}%
  \mathclose{\copy\@brx\kern-0.5\wd\@brx\usebox{\@brx}}}
\makeatother


\theoremstyle{plain} 
\newtheorem{theorem}{Theorem}[section]    
\newtheorem{proposition}[theorem]{Proposition} 
\newtheorem{corollary}[theorem]{Corollary}
\newtheorem{lemma}[theorem]{Lemma}

\theoremstyle{definition} 
\newtheorem{definition}[theorem]{Definition}
\newtheorem{example}[theorem]{Example}
\newtheorem{remark}[theorem]{Remark}


 \title{Vector-Valued Stochastic Integration With Respect to Semimartingales in the Dual of Nuclear Spaces}
\author{C. A. Fonseca-Mora}
\affil{ Centro de Investigaci\'{o}n en Matem\'{a}tica Pura y Aplicada, \\ Escuela de Matem\'{a}tica, Universidad de Costa Rica. \\
\noindent E-mail:  christianandres.fonseca@ucr.ac.cr }
\date{}

\begin{document}

 \maketitle

\abstract{In this work, we investigate a theory of stochastic integration for operator-valued processes with respect to semimartingales taking values in the dual of a nuclear space. Our construction of this particular stochastic integral relies on previous results from [Electron. J. Probab., Volume 26, paper no. 147, 2021], together with specific tools which share some common features with good integrators in finite dimensions. We investigate various properties of this stochastic integral together with applications. In particular we obtain approximations by Riemann sums results, and provide an alternative proof of  \"{U}st\"{u}nel's version of It\^{o}'s formula involving of distributions. }

\smallskip

\emph{2020 Mathematics Subject Classification:} 60H05,  60B11, 60G20, 60G48. 

\emph{Key words and phrases:} semimartingales, stochastic integrals, It\^{o}'s formula, nuclear spaces. 

\section{Introduction}

Many stochastic partial differential equations (SPDEs) appearing in mathematical models of evolutionary phenomena are naturally formulated in spaces of distributions, or more generally, in duals of nuclear spaces.  In some applications, the driving noise for these evolutionary random phenomena might possess jumps (see e.g. \cite{BergerMohamed:2021, DawsonFleischmannGorostiza:1986, FonsecaMora:SPDELevy, KallianpurXiong:1994, KallianpurXiong, LokkaOksendalProske:2004, LuDai:2018}).  Among these jump-type noises, one can choose for example a distribution-valued semimartingale. Properties of SPDEs with semimartingale noise in spaces of distributions have been studied by many autors (e.g.  \cite{BojdeckiGorostiza:1991,  BojdeckiGorostiza:1999, BojdeckiGorostiza:2001,   DawsonGorostiza:1990, FernandezGorostiza:1992, Ito, KallianpurPerezAbreu:1988,  KallianpurPerezAbreu:1989, PerezAbreuTudor:1992, PerezAbreuTudor:1994, Ustunel:1982-2}). In most of these works however,  the driving noise is either a Wiener process or a square integrable martingale, with only few works where a general semimartingale noise is considered  (\cite{BojdeckiGorostiza:1991, DawsonGorostiza:1990, FonsecaMora:StochInteg}) and only for linear SPDEs.
To study solutions of nonlinear SPDEs driven by general semimartingale noise, it is first necessary to develop a robust theory of stochastic integration with respect to semimartingales that take values in spaces of distributions. 

In the context of infinite dimensional spaces, one can distinguish between real-valued and vector-valued (i.e. with values in a infinite dimensional vector space) stochastic integrals. If the underlying space is Hilbert both constructions are very similar and only an augmentation by a small amount of operator theory is needed for the the construction of the vector-valued stochastic integral (see \cite{Metivier}). However, in the case where the infinite dimensional space is the dual of a nuclear space (as for example a space of distributions) important differences arise in the construction of both types stochastic integrals, being the most important the lack of a single norm defining the topology the main complication in the construction of the vector-valued stochastic integral.    


The first theory of real-valued stochastic integration with respect to semimartingales in duals of nuclear spaces was introduced  by 
A. S. \"Ust\"unel in the series of papers \cite{Ustunel:1982, Ustunel:1982-1, Ustunel:1982-2}. There, it is assumed that the nuclear space $\Phi$ is a complete bornological, reflexive nuclear space whose strong dual $\Phi'$ is complete and nuclear and the stochastic integral is defined through the concept of projective system of semimartingales and the theory of stochastic integration for Hilbert space-valued semimartingales. In the recent work \cite{FonsecaMora:StochInteg}, the author uses a tensor product approach to introduce a  theory of real-valued stochastic integration under the assumption that $\Phi$ is a complete, barreled and nuclear (no assumptions on the dual space $\Phi'$) and the integrand is assumed to be a $\Phi'$-valued cylindrical semimartingale. 

For the case of vector valued-stochastic integration, some authors introduced theories with respect to particular classes of semimartingales and under different assumptions on the nuclear space and its strong dual. We can cite for example stochastic integration with respect to Wiener processes \cite{BojdeckiJakubowski:1990, Ito}, square integrable martingales \cite{BrooksKozinski:2010, KorezliogluMartias:1998, Mitoma:1981}, and more recently stochastic integrals with respect to L\'evy processes \cite{FonsecaMora:SPDELevy}. To the best of our knowledge, no theory of vector-valued stochastic integration with respect to general semimartingales in the dual of a nuclear space currently exists. 

\textbf{Our main contribution.}
In this work we introduce a theory of vector-valued stochastic integration for operator-valued processes and with respect to semimartingales taking values in the dual of nuclear spaces. Our main motivation is to develop a theory that can be used to study solutions to stochastic ordinary and stochastic partial differential equations in duals of nuclear spaces. These applications will appear elsewhere.  

Our construction of the stochastic integral uses a regularization argument for cylindrical semimartingales and the theory of real-valued stochastic integration introduced by the author in  \cite{FonsecaMora:StochInteg}. To explain our construction,  assume that  $\Psi$ is a quasi-complete bornological nuclear space and $\Phi$ is a complete barrelled nuclear space (e.g. $\Phi'$ and $\Psi'$ can be one of the classical spaces of distributions  $\mathscr{E}'_{K}$, $\mathscr{E}'$, $\mathscr{S}'(\R^{d})$, $\mathscr{D}'(\R^{d})$). 
Using the construction in \cite{FonsecaMora:StochInteg}, if $X$ is a $\Phi'$-valued semimartingale to every $\Phi$-valued process $H$ which is predictable and bounded we can associate a real-valued semimartingale $\int \, H \, dX$ called the \emph{stochastic integral}.
The \emph{stochastic integral mapping} $I: H \mapsto \int \, H \, dX$ is always linear, but it is not clear that it is continuous from the space of $\Phi$-valued bounded predictable processes into the space $S^{0}$ of real-valued semimartingales. If it happens that the stochastic integral mapping is continuous, we will say that $X$ is a  \emph{$S^{0}$-good integrator}. 

Now, assume that $R$ is a $\mathcal{L}(\Phi',\Psi')$-valued process that is weakly bounded and weakly predictable (our basic class of integrands). We will show as part of our construction that $R$ induces a continuous and linear operator from $\Psi$ into the space of $\Phi$-valued bounded predictable processes. This way,  the composition of $R$ and the stochastic integral mapping $I$ defines a cylindrical semimartingale in $\Phi'$ which can be radonified to a $\Psi'$-valued semimartingale using the regularization theorem for semimartingales introduced in \cite{FonsecaMora:Semi}. The resulting process is the vector-valued stochastic integral $\int \, R \, dX$ of $R$ with respect to $X$. The main advantage of such a construction is that for each $\psi \in \Psi$ we have $\inner{\int \, R \, dX}{\psi}$ is an indistinguishable version of the real-valued stochastic integral $\int \, R'\psi \, dX$ (here $R'$ is the dual operator of $R$), hence the properties of the real-valued stochastic integral in  \cite{FonsecaMora:Semi}  ``can be transferred'' to the vector-valued stochastic integral. Moreover, we show that our integral can be approximated by Riemann sums. 

A fundamental requirement for the above construction of the vector stochastic integral is that the cylindrical semimartingale obtained by the composition $R \circ I$ has to be a continuous operator from $\Psi$ into $S^{0}$; therefore it is necessary that the integrand $X$ be a $S^{0}$-good integrator. 
For that reason, in this paper we carry on a dept study on the concept of $S^{0}$-good integrator by introducing alternative characterizations, giving sufficient conditions, and providing a collection of new examples. The above study complements the developments in \cite{FonsecaMora:UCPConvergence} where a topology for the space of $S^{0}$-good integrators was introduced. 

\textbf{Organization of the paper and further description of our results.}
First, in Section \ref{sectPreliminaries} we recall basic terminology on nuclear spaces and their strong duals, cylindrical and stochastic processes, and we summarize the basic properties of the real-valued stochastic integral introduced in \cite{FonsecaMora:StochInteg}.

In Section \ref{sectGoodIntegrators} we study the properties of $S^{0}$-good integrators. In Theorem \ref{theoCharactGoodIntegrator} we show that for nuclear Fr\'echet spaces or strict inductive limits of nuclear Fr\'echet spaces the definition of $S^{0}$-good integrator coincides with the usual definitions in finite dimensions, in particular, we only require continuity of the stochastic integral mapping from the space of bounded predictable processes into the space of real-valued random variables. 
In Theorem \ref{theoLocalGoodIntegrator} we show that the property of being a $S^{0}$-good integrator is preserved under localizing sequences and that $\Phi'$-valued semimartingales that possesses a version taking values in a Hilbert space are also $S^{0}$-good integrators (Theorem \ref{theoLocalHilbertSemiIsGoodIntegra}).  
Some other properties of $S^{0}$-good integrators as well as several examples are given.  

The construction and study of the properties of the vector-valued stochastic integral is carried out in Section \ref{sectStrongStochInteg}. We start in Section \ref{sectConstruStochaInteg} with the introduction of the basic class of weakly bounded and weakly predictable operator-valued integrands. In particular, we show in Proposition \ref{propIsomorStrongIntBounPredOpeLinOper} and Corollary \ref{coroEquivStrongIntegrands} that each one of our integrands defines a continuous linear operator from $\Psi$ into the space of bounded predictable $\Phi$-valued integrands. As explained above this property if fundamental for the construction of the vector-valued integral in Theorem \ref{theoConstStrongStochInteg}.  
The basic properties of the stochastic integral are given in Section \ref{subSectProperStochInteg}. 

Our next step is our construction is given in Section \ref{sectExtStochIntegral} where we carry out an extension of the vector-valued stochastic integral to locally integrable vector-valued integrands, i.e. that admits a localizing sequence that makes them weakly bounded and weakly predictable.  Examples of such integrands are given and the stochastic integral is constructed in Theorem \ref{theoStochIntegLocallyBounded}. Furthermore,  in  Section \ref{sectRiemannRepresentation} we prove that the $S^{0}$-vector valued stochastic integral can be approximated by Riemman sums (see Theorem \ref{theoRiemannRepresentation}).  

As a final step in our construction, in Section \ref{sectIntegAsAGoodIntegrator} we explore sufficient conditions for the stochastic integral to be a $S^{0}$-good integrator. As one of our main results we show that if we assume additionally that $\Psi$ possesses the bounded approximation property, then the stochastic integral is a $S^{0}$-good integrator (Theorem \ref{theoIntegralGoodIntegrator}). In particular, we conclude that in the space of tempered distributions the stochastic integral is always a $S^{0}$-good integrator (Corollary \ref{coroStocIntegTemperedIsGoodIntegrator}). 

Finally, as an application in Section  \ref{sectItoFormula}, we revisit the proof of   \"{U}st\"{u}nel's version of It\^{o}'s formula in Section II of \cite{Ustunel:1982Ito}, and obtain a full demonstration relaying on our results on $S^{0}$-good integrators and $S^{0}$-stochastic integrals.


\section{Preliminaries}\label{sectPreliminaries}

\subsection{Nuclear spaces and their strong duals}

Let $\Phi$ be a locally convex space (we will only consider vector spaces over $\R$). $\Phi$ is called  \emph{ultrabornological} if it is the inductive limit of a family of Banach  spaces. A \emph{barreled space} is a locally convex space such that every convex, balanced, absorbing and closed subset is a neighborhood of zero. For further details see \cite{Jarchow, NariciBeckenstein}.

We denote by $\Phi'$ the topological dual of $\Phi$ and by $\inner{f}{\phi}$ the canonical pairing of elements $f \in \Phi'$, $\phi \in \Phi$. Unless otherwise specified, $\Phi'$ will always be consider equipped with its \emph{strong topology}, i.e. the topology on $\Phi'$ generated by the family of semi-norms $( \eta_{B} )$, where for each $B \subseteq \Phi$ bounded, $\eta_{B}(f)=\sup \{ \abs{\inner{f}{\phi}}: \phi \in B \}$ for all $f \in \Phi'$.  

We denote by $\mathcal{L}(\Phi,\Psi)$ the linear space of all the linear and continuous operators between any two locally convex spaces (or more generally topological vector spaces) $\Phi$ and $\Psi$. Moreover, $\mathcal{L}_{b}(\Phi,\Psi)$ denotes the space $\mathcal{L}(\Phi,\Psi)$ equipped with the topology of bounded convergence.  For information of the topologies on the space $\mathcal{L}(\Phi,\Psi)$ the reader is referred to e.g. Chapter 32 in \cite{Treves}. If $R \in \mathcal{L}(\Phi,\Psi)$ we denote by $R'$ its \emph{dual operator} and recall that $R' \in \mathcal{L}(\Psi',\Phi')$.

A continuous seminorm (respectively norm) $p$ on $\Phi$ is called \emph{Hilbertian} if $p(\phi)^{2}=Q(\phi,\phi)$, for all $\phi \in \Phi$, where $Q$ is a symmetric, non-negative bilinear form (respectively inner product) on $\Phi \times \Phi$. For any given continuous seminorm $p$ on $\Phi$ let $\Phi_{p}$ be the Banach space that corresponds to the completion of the normed space $(\Phi / \mbox{ker}(p), \tilde{p})$, where $\tilde{p}(\phi+\mbox{ker}(p))=p(\phi)$ for each $\phi \in \Phi$.By $i_{p}: \Phi \rightarrow \Phi_{p}$ we denote the unique continuous linear extension of the quotient map $\Phi \rightarrow \Phi / \mbox{ker}(p)$.  We denote by  $\Phi'_{p}$ the dual to the Banach space $\Phi_{p}$ and by $p'$ the corresponding dual norm. 
Observe that if $p$ is Hilbertian then $\Phi_{p}$ and $\Phi'_{p}$ are Hilbert spaces. If $q$ is another continuous seminorm on $\Phi$ for which $p \leq q$, we have that $\mbox{ker}(q) \subseteq \mbox{ker}(p)$ and the canonical inclusion map from $\Phi / \mbox{ker}(q)$ into $\Phi / \mbox{ker}(p)$ has a unique continuous and linear extension that we denote by $i_{p,q}:\Phi_{q} \rightarrow \Phi_{p}$. Furthermore, we have the following relation: $i_{p}=i_{p,q} \circ i_{q}$.


Let us recall that a (Hausdorff) locally convex space $(\Phi,\mathcal{T})$ is called \emph{nuclear} if its topology $\mathcal{T}$ is generated by a family $\Pi$ of Hilbertian semi-norms such that for each $p \in \Pi$ there exists $q \in \Pi$, satisfying $p \leq q$ and the canonical inclusion $i_{p,q}: \Phi_{q} \rightarrow \Phi_{p}$ is Hilbert-Schmidt. Other equivalent definitions of nuclear spaces can be found in \cite{Pietsch, Treves}. 

Let $\Phi$ be a nuclear space. If $p$ is a continuous Hilbertian semi-norm  on $\Phi$, then the Hilbert space $\Phi_{p}$ is separable (see \cite{Pietsch}, Proposition 4.4.9 and Theorem 4.4.10, p.82). Now, let $( p_{n} : n \in \N)$ be an increasing sequence of continuous Hilbertian semi-norms on $(\Phi,\mathcal{T})$. We denote by $\theta$ the locally convex topology on $\Phi$ generated by the family $( p_{n} : n \in \N)$. The topology $\theta$ is weaker than $\mathcal{T}$. We  will call $\theta$ a (weaker) \emph{countably Hilbertian topology} on $\Phi$ and we denote by $\Phi_{\theta}$ the space $(\Phi,\theta)$ and by $\widehat{\Phi}_{\theta}$ its completion. The space $\widehat{\Phi}_{\theta}$ is a (not necessarily Hausdorff) separable, complete, pseudo-metrizable (hence Baire and ultrabornological; see Example 13.2.8(b) and Theorem 13.2.12 in \cite{NariciBeckenstein}) locally convex space and its dual space satisfies $(\widehat{\Phi}_{\theta})'=(\Phi_{\theta})'=\bigcup_{n \in \N} \Phi'_{p_{n}}$ (see \cite{FonsecaMora:Existence}, Proposition 2.4). 

The class of complete ultrabornological (hence barrelled) nuclear spaces includes many spaces of functions widely used in analysis. Indeed, it is known (see e.g. \cite{Pietsch, Treves, Schaefer}) that the spaces of test functions $\mathscr{E}_{K} \defeq \mathcal{C}^{\infty}(K)$ ($K$: compact subset of $\R^{d}$), $\mathscr{E}\defeq \mathcal{C}^{\infty}(\R^{d})$, the rapidly decreasing functions $\mathscr{S}(\R^{d})$, and the space of harmonic functions $\mathcal{H}(U)$ ($U$: open subset of $\R^{d}$),  are all  examples of Fr\'{e}chet nuclear spaces. Their (strong) dual spaces $\mathscr{E}'_{K}$, $\mathscr{E}'$, $\mathscr{S}'(\R^{d})$, $\mathcal{H}'(U)$, are also nuclear spaces.
On the other hand, the space of test functions $\mathscr{D}(U) \defeq \mathcal{C}_{c}^{\infty}(U)$ ($U$: open subset of $\R^{d}$), the space of polynomials $\mathcal{P}_{n}$ in $n$-variables, the space of real-valued sequences $\R^{\N}$ (with direct sum topology) are strict inductive limits of Fr\'{e}chet nuclear spaces (hence they are also nuclear). The space of distributions  $\mathscr{D}'(U)$  ($U$: open subset of $\R^{d}$) is also nuclear.

\subsection{Cylindrical and Stochastic Processes} \label{subSectionCylAndStocProcess}

Throughout this work we assume that $\ProbSpace$ is a complete probability space equipped with a filtration $( \mathcal{F}_{t} : t \geq 0)$ that satisfies the \emph{usual conditions}, i.e. it is right continuous and $\mathcal{F}_{0}$ contains all subsets of sets of $\mathcal{F}$ of $\Prob$-measure zero. We denote by $L^{0} \ProbSpace$ the space of equivalence classes of real-valued random variables defined on $\ProbSpace$. The space $L^{0} \ProbSpace$ will be always equipped with the topology of convergence in probability and in this case it is a complete, metrizable, topological vector space.

Denote by $\mathbb{D}$ the collection of all real-valued $(\mathcal{F}_{t})$-adapted processes with c\`{a}dl\`{a}g paths. For $z \in \mathbb{D}$, let 
$$r_{ucp}(z)= \sum_{n=1}^{\infty} 2^{-n} \Exp \left( 1 \wedge \sup_{0 \leq t \leq n} \abs{z_{t}} \right),$$
which is an F-seminorm on $\mathbb{D}$. The corresponding translation invariant metric 
$$d_{ucp}(y,z)= r_{ucp}(y-z), \quad y,z \in \mathbb{D},$$
defines the topology of  convergence in probability uniformly on compact intervals of time (abbreviated as the UCP topology) on $\mathbb{D}$. 
When equipped with the UCP topology, the space $\mathbb{D}$ is a complete, metrizable, topological vector space.

We denote by $S^{0}$  the linear space (of equivalence classes) of real-valued semimartingales. On the space $S^{0}$ we define the $F$-seminorm: 
$$r_{em}(z) = \sup\{ r_{ucp}( h \cdot z) : h \in \mathcal{E}_{1}  \}, \quad z \in S^{0}$$
where  $\mathcal{E}_{1}$ is the collection of all the  real-valued predictable processes of the form 
$ \displaystyle{h=a_{0} \mathbbm{1}_{\{0\}} + \sum_{i=1}^{n-1} a_{i} \mathbbm{1}_{(t_{i}, t_{i+1}]}}$,  
for $0 < t_{1} < t_{2} < \dots < t_{n} < \infty$,   $a_{i}$ is a bounded $\mathcal{F}_{t_{i}}$-measurable random variable for $i=1, \dots, n-1$, $\abs{h} \leq 1$, and $ (h \cdot z)_{t}= a_{0} z_{0}+\sum_{i=1}^{n-1} a_{i} \left( z_{t_{i+1} \wedge t}-z_{t_{i} \wedge t} \right)$.  

The semimartingale topology on $S^{0}$ (also known as the \'Emery topology) is the topology defined by the translation invariant metric: 
$$d_{em}(y,z)= r_{em}(y-z), \quad y,z \in S^{0},$$
We always consider $S^{0}$ equipped with the semimartingale topology which makes it a complete, metrizable, topological vector space. For further details on the semimartingale topology see e.g.  Section 12.4 in \cite{CohenElliott} or Section 4.9 in \cite{KarandikarRao}.  


In this work we will make reference to several spaces of particular classes of semimartingales which we detail as follows. We denote by $\mathcal{M}_{loc}$ and $\mathscr{V}$ the subspaces of real-valued local martingales and of finite variation process.  
By $S^{c}$ we denote the subspace of $S^{0}$ of all the continuous semimartingales and by $\mathcal{M}^{c}_{loc}$ the space of continuous local martingales, both are equipped with the topology of uniform convergence in probability on compact intervals of time. Likewise $\mathcal{A}_{loc}$  denotes the space of all predictable processes of finite variation, with locally integrable variation, equipped with the F-seminorm: $\norm{a}_{\mathcal{A}_{loc}}=\Exp \left( 1 \wedge \int_{0}^{\infty} \abs{d a_{s}} \right)$. The spaces $S^{c}$, $\mathcal{M}^{c}_{loc}$ and $\mathcal{A}_{loc}$ are all closed subspaces of $S^{0}$ and the subspace topology on $S^{c}$, $\mathcal{M}^{c}_{loc}$ and $\mathcal{A}_{loc}$ coincides with their given topology (see \cite{Memin:1980}, Th\'{e}or\`{e}me IV.5 and IV.7). 
 
For every real-valued semimartingale $x=(x_{t}:t \geq 0)$ and each $p \in [1,\infty]$, we denote by $\norm{x}_{\mathcal{H}^{p}_{S}}$ the following quantity: 
$$\norm{x}_{\mathcal{H}^{p}_{S}} = \inf \left\{ \norm{ [m,m]_{\infty}^{1/2} + \int_{0}^{\infty} \abs{d a_{s} } }_{L^{p}\ProbSpace} : x=m+a \right\}, $$
where the infimum is taken over all the decompositions $x=m+a$ as a sum of a local martingale $m$ and a process of finite variation $a$. Recall that  $([m,m]_{t}: t \geq 0)$ denotes the quadratic variation process associated to the local martingale $m$, i.e. $[m,m]_{t}=\llangle  m^{c} , m^{c}  \rrangle_{t}+\sum_{0 \leq s \leq t} (\Delta m_{s})^{2}$, where $m^{c}$ is the (unique) continuous local martingale part of $m$ and $(\llangle  m^{c} , m^{c}  \rrangle_{t}:t \geq 0)$ its angle bracket process (see Section I in \cite{JacodShiryaev}). The set of all semimartingales $x$ for which $\norm{x}_{\mathcal{H}^{p}_{S}}< \infty$ is a Banach space under the norm $\norm{\cdot}_{\mathcal{H}^{p}_{S}}$ and is denoted by $\mathcal{H}^{p}_{S}$ (see Section 16.2 in \cite{CohenElliott}). Furthermore, if $x=m+a$ is a decomposition of $x$ such that $\norm{x}_{\mathcal{H}^{p}_{S}}< \infty$ it is known that in such a case $a$ is of integrable variation (see VII.98(c) in \cite{DellacherieMeyer}). 


For $p \geq 1$, denote by $\mathcal{M}_{\infty}^{p}$ the space of real-valued martingales for which $\norm{m}_{\mathcal{M}_{\infty}^{p}}= \norm{ \sup_{t \geq 0} \abs{m_{t}} }_{L^{p}\ProbSpace}< \infty$. It is well-known that $\mathcal{M}_{\infty}^{p}$ equipped with the norm $\norm{\cdot}_{\mathcal{M}_{\infty}^{p}}$ is a Banach space. Likewise, we denote by $\mathcal{A}$ the space of all predictable processes of finite variation, with integrable variation. It is well-know that $\mathcal{A}$ is a Banach space when equipped with the norm $\norm{a}_{\mathcal{A}}=\Exp \int_{0}^{t} \abs{d a_{s}} < \infty$. 


Let $\Phi$ be a locally convex space. A \emph{cylindrical random variable}\index{cylindrical random variable} in $\Phi'$ is a linear map $X: \Phi \rightarrow L^{0} \ProbSpace$ (see \cite{FonsecaMora:Existence}). If $X$ is a cylindrical random variable in $\Phi'$, we say that $X$ is \emph{$n$-integrable} ($n \in \N$)  if $ \Exp \left( \abs{X(\phi)}^{n} \right)< \infty$, $\forall \, \phi \in \Phi$. We say that $X$ has \emph{zero-mean} if $ \Exp \left( X(\phi) \right)=0$, $\forall \phi \in \Phi$. 
The \emph{Fourier transform} of $X$ is the map from $\Phi$ into $\C$ given by $\phi \mapsto \Exp ( e^{i X(\phi)})$.

Let $X$ be a $\Phi'$-valued random variable, i.e. $X:\Omega \rightarrow \Phi'$ is a $\mathscr{F}/\mathcal{B}(\Phi')$-measurable map. For each $\phi \in \Phi$ we denote by $\inner{X}{\phi}$ the real-valued random variable defined by $\inner{X}{\phi}(\omega) \defeq \inner{X(\omega)}{\phi}$, for all $\omega \in \Omega$. The linear mapping $\phi \mapsto \inner{X}{\phi}$ is called the \emph{cylindrical random variable induced/defined by} $X$. We will say that a $\Phi'$-valued random variable $X$ is \emph{$n$-integrable} ($n \in \N$) if the cylindrical random variable induced by $X$ is \emph{$n$-integrable}. 
 
Let $J=\R_{+} \defeq [0,\infty)$ or $J=[0,T]$ for  $T>0$. We say that $X=( X_{t}: t \in J)$ is a \emph{cylindrical process} in $\Phi'$ if $X_{t}$ is a cylindrical random variable for each $t \in J$. Clearly, any $\Phi'$-valued stochastic processes $X=( X_{t}: t \in J)$ induces/defines a cylindrical process under the prescription: $\inner{X}{\phi}=( \inner{X_{t}}{\phi}: t \in J)$, for each $\phi \in \Phi$. 

If $X$ is a cylindrical random variable in $\Phi'$, a $\Phi'$-valued random variable $Y$ is called a \emph{version} of $X$ if for every $\phi \in \Phi$, $X(\phi)=\inner{Y}{\phi}$ $\Prob$-a.e. A $\Phi'$-valued process $Y=(Y_{t}:t \in J)$ is said to be a $\Phi'$-valued \emph{version} of the cylindrical process $X=(X_{t}: t \in J)$ on $\Phi'$ if for each $t \in J$, $Y_{t}$ is a $\Phi'$-valued version of $X_{t}$.

A $\Phi'$-valued process $X=( X_{t}: t \in J)$ is  \emph{continuous} (respectively \emph{c\`{a}dl\`{a}g}) if for $\Prob$-a.e. $\omega \in \Omega$, the \emph{sample paths} $t \mapsto X_{t}(\omega) \in \Phi'$ of $X$ are continuous (respectively c\`{a}dl\`{a}g). 


A $\Phi'$-valued random variable $X$ is called \emph{regular} if there exists a weaker countably Hilbertian topology $\theta$ on $\Phi$ such that $\Prob( \omega: X(\omega) \in (\widehat{\Phi}_{\theta})')=1$. If $\Phi$ is a barrelled (e.g. ultrabornological) nuclear space, the property of being regular is  equivalent to the property that the law of $X$ be a Radon measure on $\Phi'$ (see Theorem 2.10 in \cite{FonsecaMora:Existence}).  A $\Phi'$-valued process $X=(X_{t}:t \geq 0)$ is said to be \emph{regular} if for each $t \geq 0$, $X_{t}$ is a regular random variable.

A \emph{cylindrical semimartingale} in $\Phi'$ is a cylindrical process $X=(X_{t}: t \geq 0)$ in $\Phi'$ such that $\forall \phi \in \Phi$, $X(\phi)$ is a real-valued semimartingale. A $\Phi'$-valued process $Y=(Y_{t}: t \geq 0)$ is called a \emph{semimartingale} if it induces a cylindrical semimartingale. We denote by $S^{0}(\Phi')$ the collection of all the $\Phi'$-valued regular c\`{a}dl\`{a}g $(\mathcal{F}_{t})$-semimartingales.

\begin{remark}\label{remarkEqualInSemimartingale}
Any two elements $X=(X_{t}: t \geq 0)$, $Y=(Y_{t}: t \geq 0) \in S^{0}(\Phi')$ are equal if they are indistinguishable. By Proposition 2.12 in \cite{FonsecaMora:Existence} a sufficient condition for $X=Y$ in $S^{0}(\Phi')$ is that for every $\phi \in \Phi$, $\inner{X_{t}}{\phi} = \inner{Y_{t}}{\phi}$ $\Prob$-a.e., i.e. if $\inner{X}{\phi}$ is a version of $\inner{Y}{\phi}$. 
\end{remark}

In general, if $\goth{S}$ denotes any space of a particular class of semimartingales (as described above), then by a $\goth{S}$-semimartingale in $\Phi'$ we mean a $\Phi'$-valued  process $X=(X_{t}: t \geq 0)$ such that $\forall \phi \in \Phi$, $\inner{X}{\phi} \in \goth{S}$.


We recall the definition of continuous part of a semimartingale. Let $X$ be a $\Phi'$-valued $(\mathcal{F}_{t})$-adapted, c\`{a}dl\`{a}g semimartingale for which the mapping $X:\Phi \rightarrow S^{0}$ is continuous from $\Phi$ into $S^{0}$. By Theorem 4.2 and Remark 4.6 in \cite{FonsecaMora:Semi} there exist a unique $\Phi'$-valued regular continuous local martingale $X^{c}=(X^{c}_{t}: t \geq 0)$ with the following property: for every $\phi \in \Phi$, if $X^{c,\phi}$ denotes the real-valued continuous local martingale corresponding to the canonical representation of $\inner{X}{\phi}$ (see VIII.45 in \cite{DellacherieMeyer}), then  the real-valued processes $\inner{X^{c}}{\phi}$ and $X^{c,\phi}$ are indistinguishable. The process $X^{c}$ is the continuous local martingale part of $X$.

\subsection{Real-valued stochastic integration}\label{sectRealValuedStochIntegration}

In this section we review some results from the theory of stochastic integration developed in Sections 4 and 5 in \cite{FonsecaMora:StochInteg} with respect to a semimartingale taking values in the dual of a nuclear space. Before we need some terminology.   

We denote by $(S^{0})_{lcx}$  the convexification of $S^{0}$, i.e. the linear space $S^{0}$ equipped with the strongest locally convex topology on $S^{0}$ that is weaker than the semimartingale topology. Since the semimartingale topology is not locally convex, the convexified topology on $S^{0}$ is strictly weaker than the semimartingale topology. 

Denote by $b\mathcal{P}$ the Banach space of all the bounded predictable processes $h : \R_{+} \times \Omega \rightarrow \R$   equipped with the uniform norm $\norm{h}_{u}=\sup_{(r,\omega)} \abs{h(r,\omega)}$. If $h \in b\mathcal{P}$ and $z \in S^{0}$, then $h$ is stochastically integrable with respect to $z$, and its stochastic integral, that we denote by $h \cdot z=(( h \cdot z)_{t}: t \geq 0)$, is an element of $S^{0}$ (see \cite{Protter}, Theorem IV.15). The mapping $(z,h) \mapsto h \cdot z$ from $S^{0} \times b\mathcal{P}$ into $S^{0}$ is bilinear (see \cite{Protter}, Theorem IV.16-7) and separately continuous (see Theorems 12.4.10-13 in \cite{CohenElliott}).  

Let  $\Phi$ be a complete barrelled nuclear space. 
We denote by $b\mathcal{P}(\Phi)$ the space of all $\Phi$-valued processes $H=(H(t,\omega): t \geq 0, \omega \in \Omega)$ with the property that $\inner{f}{H}\defeq \{ \inner{f}{H(t,\omega)}: t \geq 0, \omega \in \Omega   \} \in b\mathcal{P}$ for every $f \in \Phi'$. The space $b\mathcal{P}(\Phi)$ is complete (and Fr\'{e}chet if $\Phi$ is so)  when equipped with the topology generated by the seminorms $H \mapsto \sup_{(t,\omega)} p(H(t,\omega))$ where $p$ ranges over a generating family of seminorms for the topology on $\Phi$ (see Section 4.2 in \cite{FonsecaMora:StochInteg}). Recall that a $\Phi$-valued process is called \emph{elementary} if it takes the form  
\begin{equation}\label{eqElemenProcess}
H(t,\omega)=\sum_{k=1}^{m} h_{k}(t,\omega) \phi_{k},
\end{equation}
where for $k=1, \cdots, m$ we have $h_{k} \in b\mathcal{P}$ and $\phi_{k} \in \Phi$. By Corollary 4.9 in \cite{FonsecaMora:StochInteg} the collection of all the $\Phi$-valued elementary process is dense in $b\mathcal{P}(\Phi)$.  

Let $X=(X_{t}: t \geq 0)$ be a $\Phi'$-valued $(\mathcal{F}_{t})$-adapted semimartingale for which the mapping $\phi \mapsto X(\phi)$ is continuous from $\Phi$ into $S^{0}$. By Theorem 3.7 and Proposition 3.14 in \cite{FonsecaMora:Semi} $X$ has a regular c\`{a}dl\`{a}g version. Hence $X \in S^{0}(\Phi')$.  

Now the stochastic integral with respect to $X$ is defined as follows: by Theorem 4.10 in \cite{FonsecaMora:StochInteg} for each $H \in b\mathcal{P}(\Phi)$ there exists a real-valued c\`{a}dl\`{a}g $(\mathcal{F}_{t})$-adapted semimartingale $\int \, H \, dX$, called the \emph{stochastic integral} of $H$ with respect to $X$, such that:
\begin{enumerate}[label=(SI{\arabic*})]
\item For every $\Phi$-valued elementary process of the form \eqref{eqElemenProcess} we have 
\begin{equation}\label{eqActionWeakIntegSimpleIntegNuclear}
  \int \, H \, dX =  \sum_{k=1}^{n} \, h_{k} \cdot \inner{X}{\phi_{k}}. 
\end{equation}
\item \label{continuityStochIntegConvexif} The mapping $H \mapsto \int \, H \, dX$ is continuous from  $b\mathcal{P}(\Phi)$ into $(S^{0})_{lcx}$. 
\item \label{properBilinearity} The mapping $(H,X) \mapsto \int \, H \, dX$ is bilinear. 
\item \label{eqWeakIntegContPart} $\displaystyle{\left(\int H \, dX \right)^{c}=\int H \, dX^{c}}$.
\item \label{eqWeakIntegStopping} $\displaystyle{\left(\int H \, dX \right)^{\tau}=\int H \mathbbm{1}_{[0,\tau]} \, dX= \int H \, dX^{\tau}}$, for every stopping time $\tau$.
\end{enumerate}

We will say that a mapping $H: \R_{+} \times \Omega \rightarrow \Phi$ is locally bounded if there exists a sequence of stopping times increasing to $\infty$ $\Prob$-a.e. such that for each $n \in \N$, the mapping $(t, \omega) \mapsto H^{\tau_{n}}(t,\omega) \defeq H(t \wedge \tau_{n}, \omega)$ takes its values in a bounded subset of $\Phi$ for $\Prob$-almost all $\omega \in \Omega$.  $(\tau_{n}: n \in \N)$ is called a \emph{localizing sequence} for $H$. We denote by $\mathcal{P}_{loc}(\Phi)$ the space of all (equivalence classes of) mappings $H: \R_{+} \times \Omega \rightarrow \Phi$ that are weakly predictable and locally bounded. 
Notice that $b\mathcal{P}(\Phi) \subseteq \mathcal{P}_{loc}(\Phi)$. One can show (Theorem 5.8 in \cite{FonsecaMora:StochInteg}) that for every $H \in \mathcal{P}_{loc}(\Phi)$  there exists a real-valued c\`{a}dl\`{a}g $(\mathcal{F}_{t})$-adapted semimartingale $\int \, H \, dX$ satisfying \ref{properBilinearity}-\ref{eqWeakIntegStopping} above. 

\section{$S^{0}$-good integrators}\label{sectGoodIntegrators}

Let  $\Phi$ be a complete barrelled nuclear space. We begin with the following concept  introduced in  \cite{FonsecaMora:Semi}.

\begin{definition}
A $\Phi'$-valued $(\mathcal{F}_{t})$-adapted semimartingale $X=(X_{t}: t \geq 0)$ is a \emph{$S^{0}$-good integrator} if the mapping $\phi \mapsto X(\phi)$ is continuous from $\Phi$ into $S^{0}$ and if the  stochastic integral mapping $H \mapsto \int \, H \, dX$  defines a continuous linear mapping from $b\mathcal{P}(\Phi)$ into $S^{0}$.
\end{definition}

As it is shown in \cite{FonsecaMora:Semi}, for a $S^{0}$-good integrator the stochastic integral possesses further properties as are a Riemann representation, a stochastic integration by parts formula and a stochastic Fubini theorem (see Sections 5.2 and 6.1 in  \cite{FonsecaMora:Semi}). Moreover, for our construction of the vector stochastic integral in Section \ref{sectConstruStochaInteg} we will require our semimartigales to be $S^{0}$-good integrators. For these reasons, in this section we deepen into the study of sufficient conditions to be a $S^{0}$-good integrator and introduce several examples. 

Observe that the property of being a $S^{0}$-good integrator is not a direct consequence of property \ref{continuityStochIntegConvexif} of the stochastic integral. This because the convexified topology is strictly weaker than semimartingale's topology on $S^{0}$.  

If $X$ is an $(\mathcal{F}_{t})$-adapted semimartingale in $\Phi'$ for which the mapping $X:\Phi \rightarrow S^{0}$ is continuous from $\Phi$ into $S^{0}$, then we know by Proposition 4.12 in \cite{FonsecaMora:StochInteg} that $X$ is a $S^{0}$-good integrator in any of the following cases: 
\begin{enumerate}
\item If $X$ is a $\mathcal{H}^{p}_{S}$-semimartingale.  
\item If $X$ is a $\mathcal{M}_{\infty}^{2}$-martingale. 
\item If $X$ is a $\mathcal{A}$-semimartingale
\end{enumerate}
See also Corollary 4.13 and Proposition 7.3 in \cite{FonsecaMora:StochInteg} for other examples of $S^{0}$-good integrators.

We denote by $\mathbbm{S}^{0}(\Phi')$ the collection of all the $\Phi'$-valued semimartingales which are $S^{0}$-good integrators. It follows from \ref{properBilinearity} that $\mathbbm{S}^{0}(\Phi')$ is a linear subspace of $S^{0}(\Phi')$. 

If $\Phi$ is either a Fr\'{e}chet nuclear space or the strict inductive limit of Fr\'{e}chet nuclear spaces one can introduce a topology on $\mathbbm{S}^{0}(\Phi')$ such that the real-valued stochastic integral mapping is continuous on the integrators (see \cite{FonsecaMora:UCPConvergence}). 



 The following result shows that under some additional assumptions on $\Phi$ our definition of $S^{0}$-good integrator coincides with that in finite dimensions (e.g. in \cite{Protter}).

\begin{theorem}\label{theoCharactGoodIntegrator}
Assume $\Phi$ is either a Fr\'{e}chet nuclear space or the strict inductive limit of Fr\'{e}chet nuclear spaces. Let $X$ be a $(\mathcal{F}_{t})$-adapted semimartingale in $\Phi'$ for which the mapping $X:\Phi \rightarrow S^{0}$ is continuous from $\Phi$ into $S^{0}$. The following assertions are equivalent: 
\begin{enumerate}
\item \label{goodIntegCharac} The stochastic integral mapping $H \mapsto \int H dX$ is continuous from $b\mathcal{P}(\Phi)$ into $S^{0}$, i.e. $X$ is a $S^{0}$-good integrator. 
\item \label{ucpGoodIntegraCharac}  The stochastic integral mapping $H \mapsto \int H dX$ is continuous from $b\mathcal{P}(\Phi)$ into the space $(\mathbb{D}, ucp)$.
\item \label{weakGoodIntegraCharac} For every $t \geq 0$ the mapping $H \mapsto \int_{0}^{t} H(r) dX_{r}$ is continuous from $b\mathcal{P}(\Phi)$  into $L^{0}(\Omega, \mathcal{F}, \Prob)$.  
\end{enumerate} 
\end{theorem}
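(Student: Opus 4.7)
The implications $(1) \Rightarrow (2) \Rightarrow (3)$ follow from the standard hierarchy of topologies on semimartingale spaces. The semimartingale topology on $S^{0}$ is finer than the UCP topology (classical, see e.g.\ Section 12.4 of \cite{CohenElliott}), so the identity embedding $S^{0} \hookrightarrow (\mathbb{D}, d_{ucp})$ is continuous, giving $(1) \Rightarrow (2)$. Moreover, $d_{ucp}$-convergence forces $\sup_{s \leq n} \abs{y_{s}-z_{s}} \to 0$ in probability for each $n$, and hence $y_{t} \to z_{t}$ in probability for each fixed $t$, so the evaluation map $(\mathbb{D}, d_{ucp}) \to L^{0}$ at time $t$ is continuous, giving $(2) \Rightarrow (3)$.

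The substantive content is $(3) \Rightarrow (1)$, which I would establish via the closed graph theorem. Consider first the case when $\Phi$ is Fréchet nuclear. Then $b\mathcal{P}(\Phi)$ is a Fréchet space (as recalled from Section 4.2 of \cite{FonsecaMora:StochInteg}), and $S^{0}$ is a complete metrizable (not necessarily locally convex) topological vector space. Since the closed graph theorem holds between F-spaces (local convexity of the target is not needed), it suffices to show that the linear map $I \colon H \mapsto \int H\, dX$ from $b\mathcal{P}(\Phi)$ to $S^{0}$ has closed graph. Suppose $H_{n} \to H$ in $b\mathcal{P}(\Phi)$ and $I(H_{n}) \to Y$ in $S^{0}$. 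Composing with the continuous evaluation $S^{0} \to L^{0}$ at $t$ (from the easy implications) we obtain $I(H_{n})_{t} \to Y_{t}$ in probability. By hypothesis $(3)$, we also have $I(H_{n})_{t} \to I(H)_{t}$ in probability. Uniqueness of limits yields $Y_{t} = I(H)_{t}$ $\Prob$-almost surely for every $t \geq 0$, and since both processes are c\`adl\`ag they are indistinguishable; thus $Y = I(H)$ in $S^{0}$, the graph is closed, and the closed graph theorem delivers the desired continuity of $I$ into $S^{0}$.

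For the case $\Phi = \varinjlim_{m} \Phi_{m}$ a strict inductive limit of Fréchet nuclear spaces, $b\mathcal{P}(\Phi)$ need not be metrizable and I would reduce to the Fréchet setting. The key observation is that any $H \in b\mathcal{P}(\Phi)$ has bounded range in $\Phi$, since $\sup_{t,\omega} p(H(t,\omega)) < \infty$ for every continuous seminorm $p$; by the defining property of strict inductive limits of Fréchet spaces, this range is contained in some $\Phi_{m}$, so $b\mathcal{P}(\Phi) = \bigcup_{m} b\mathcal{P}(\Phi_{m})$, with each inclusion continuous and the topology on each layer coinciding with its intrinsic Fréchet topology. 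Any convergent sequence in $b\mathcal{P}(\Phi)$ is bounded, hence contained in some $b\mathcal{P}(\Phi_{m})$ where the Fréchet case applies; this, combined with the LF-type structure (or with De Wilde's closed graph theorem for webbed spaces against F-spaces) yields continuity of $I$ on the whole of $b\mathcal{P}(\Phi)$.

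The main obstacle I anticipate is precisely this last patching step: the global seminorm topology on $b\mathcal{P}(\Phi)$ defined in the preliminaries need not coincide verbatim with the LF-inductive limit topology on $\bigcup_{m} b\mathcal{P}(\Phi_{m})$, and reconciling the two so as to promote the layerwise Fréchet continuities to genuine continuity of $I$ requires a careful topological argument. Everything else is a rather mechanical application of the closed graph theorem and the uniqueness of limits in probability.
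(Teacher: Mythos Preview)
Your approach is essentially the paper's own: the easy implications are identical, and $(3)\Rightarrow(1)$ is obtained via the closed graph theorem applied to $I:b\mathcal{P}(\Phi)\to S^{0}$, with the graph shown closed by comparing limits in $L^{0}$ at each fixed $t$ and then using the c\`adl\`ag property. The only substantive difference is in how the strict inductive limit case is handled.

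The paper does not split into cases or attempt a layerwise patching argument. Instead it invokes (from \cite{FonsecaMora:UCPConvergence}, Sections 4.2--4.3) the fact that when $\Phi$ is a strict inductive limit of Fr\'echet nuclear spaces, $b\mathcal{P}(\Phi)$ is itself a strict inductive limit of Fr\'echet spaces, hence ultrabornological. It then applies a single closed graph theorem valid for linear maps from ultrabornological spaces into complete metrizable topological vector spaces (Jarchow, Theorem 5.4.1), which covers both the Fr\'echet and the LF situations at once; the closed-graph verification is carried out with nets rather than sequences, so no metrizability of the domain is needed at that step either. This is exactly the De Wilde-type route you mention parenthetically, and it dissolves the obstacle you anticipated: one does not need to reconcile the global seminorm topology with an inductive limit topology by hand, because the ultrabornological property of $b\mathcal{P}(\Phi)$ is quoted as an established fact and the appropriate closed graph theorem does the rest.
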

\begin{proof}
That \ref{goodIntegCharac} implies \ref{ucpGoodIntegraCharac} follows because the inclusion mapping from $S^{0}$ into $(\mathbb{D}, ucp)$ is linear and continuous. That \ref{ucpGoodIntegraCharac} implies \ref{weakGoodIntegraCharac} immediate from the definition of the UCP topology. 

We must show that \ref{weakGoodIntegraCharac} implies \ref{goodIntegCharac}. 
Let $I$ denotes the stochastic integral mapping $I(H)=\int H dX$. We already now that $I(b\mathcal{P}(\Phi)) \subseteq S^{0}$. Since $b\mathcal{P}(\Phi)$ is either a Fr\'echet space or a strict inductive limit of Fr\'{e}chet nuclear spaces (see Sections 4.2 and 4.3 in \cite{FonsecaMora:UCPConvergence}; hence $b\mathcal{P}(\Phi)$ is  ultrabornological), and $S^{0}$ is a complete metrizable topological vector space, by the closed graph theorem (see \cite{Jarchow}, Theorem 5.4.1, p.92) it suffices to show that $I$  is closed from $b\mathcal{P}(\Phi)$ into $S^{0}$. 

Let $(H_{\lambda})$ be a net converging to $H$ in $b\mathcal{P}(\Phi)$,  and let $Y \in S^{0}$ such that $I(H_{\lambda}) \rightarrow Y$ in $S^{0}$. Since convergence in the semimartingale topology implies convergence in UCP, then for each $t \geq 0$ we have $I(H_{\lambda})_{t} \rightarrow Y_{t}$ in probability. 
\
On the other hand, by our assumption we have   $I(H_{\lambda})_{t} \rightarrow I(H)_{t}$ in probability. By uniqueness of limits we have $Y_{t}=I(H)_{t}$ $\Prob$-a.e. Since both $Y$ and $I(H)$ are c\`{a}dl\`{a}g processes, then $Y$ and $I(H)$ are indistinguishable, i.e. $Y = I(H)$ in $S^{0}$. Therefore the mapping $I$ is closed, hence continuous, and $X$ is a $S^{0}$-good integrator in $\Phi'$.   
\end{proof}

Now we explore stability of the property of being a $S^{0}$-good integrator under taking the continuous part and under a stopping time. 

\begin{proposition} \label{propGoodIntegraContPartStopping}
Let $X$ be a $S^{0}$-good integrator in $\Phi'$ and let $\tau$ be a stopping time. Then $X^{c}$ and $X^{\tau}$ are $S^{0}$-good integrators. 
\end{proposition}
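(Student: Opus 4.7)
The plan is to reduce both claims to the continuity, on $S^{0}$ in the \'Emery topology, of two natural endomorphisms: the stopping map $z \mapsto z^{\tau}$ and the continuous local martingale part map $z \mapsto z^{c}$. For the stopping map the continuity is essentially immediate from the definition of $r_{em}$: if $h \in \mathcal{E}_{1}$, then $h \mathbbm{1}_{[0,\tau]} \in \mathcal{E}_{1}$ and $h \cdot z^{\tau} = (h \mathbbm{1}_{[0,\tau]}) \cdot z$, so $r_{em}(z^{\tau}) \leq r_{em}(z)$. For $z \mapsto z^{c}$ the continuity in the \'Emery topology is a known property; I would appeal to Section 4.9 in \cite{KarandikarRao} or \cite{Memin:1980}. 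Granting these two facts, the rest is routine bookkeeping via properties \ref{eqWeakIntegContPart} and \ref{eqWeakIntegStopping}.

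First I would handle $X^{\tau}$. Since $\inner{X^{\tau}}{\phi} = \inner{X}{\phi}^{\tau}$ for every $\phi \in \Phi$, the map $\phi \mapsto X^{\tau}(\phi)$ factors as the composition of $\phi \mapsto X(\phi)$ (continuous $\Phi \to S^{0}$ by hypothesis) with the stopping map, and is therefore continuous from $\Phi$ into $S^{0}$. Next, property \ref{eqWeakIntegStopping} gives $\int H \, dX^{\tau} = (\int H \, dX)^{\tau}$ for every $H \in b\mathcal{P}(\Phi)$, so the stochastic integral mapping for $X^{\tau}$ is the composition of the (continuous by hypothesis) integral mapping for $X$ with the stopping map, hence continuous from $b\mathcal{P}(\Phi)$ into $S^{0}$.

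Then I would treat $X^{c}$ analogously. By the definition of the continuous local martingale part recalled in Section \ref{subSectionCylAndStocProcess}, $\inner{X^{c}}{\phi}$ is indistinguishable from $\inner{X}{\phi}^{c}$, so $X^{c}(\phi) = \inner{X}{\phi}^{c}$ as elements of $S^{0}$. Consequently $\phi \mapsto X^{c}(\phi)$ is the composition of $\phi \mapsto X(\phi)$ with $z \mapsto z^{c}$, hence continuous. Property \ref{eqWeakIntegContPart} yields $\int H \, dX^{c} = (\int H \, dX)^{c}$, and continuity of $H \mapsto \int H \, dX^{c}$ follows in the same way.

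The main obstacle is the continuous-part case: the stopping argument is nearly trivial from the definition of $r_{em}$, but continuity of $z \mapsto z^{c}$ on $S^{0}$ is subtler. If the reader is uncomfortable simply quoting this, a self-contained argument would proceed through the continuity of the quadratic variation map $z \mapsto [z,z]$ and of the jump quadratic variation $z \mapsto \sum_{s \leq \cdot}(\Delta z_{s})^{2}$ on $S^{0}$, which together with Davis--Burkholder--Gundy type estimates control $[z^{c},z^{c}]$, and then one appeals to Memin's closedness of $\mathcal{M}^{c}_{loc}$ in $S^{0}$ (Th\'eor\`eme IV.5 in \cite{Memin:1980}) to upgrade to convergence in $S^{0}$.
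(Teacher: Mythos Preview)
Your proposal is correct and follows essentially the same route as the paper: both arguments use properties \ref{eqWeakIntegContPart} and \ref{eqWeakIntegStopping} to write $\int H\,dX^{c}$ and $\int H\,dX^{\tau}$ as compositions of $H \mapsto \int H\,dX$ with the \'Emery-continuous maps $z \mapsto z^{c}$ and $z \mapsto z^{\tau}$ (the paper cites \cite{Emery:1979} for this continuity). You are in fact slightly more careful than the paper in explicitly verifying the first half of the good-integrator definition, namely continuity of $\phi \mapsto X^{c}(\phi)$ and $\phi \mapsto X^{\tau}(\phi)$, which the paper leaves implicit.
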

\begin{proof}
By definition we have that $X^{c}$ and $X^{\tau}$ are $\Phi'$-valued semimartingales. To show they are $S^{0}$-good integrators, observe that by \ref{eqWeakIntegContPart} and \ref{eqWeakIntegStopping} in Section \ref{sectRealValuedStochIntegration} for every $H \in b\mathcal{P}(\Phi)$ we have (as elements in $S^{0}$):
\begin{equation}\label{eqContinuPartAndStoppingWeakIntegral}
\left(\int H \, dX \right)^{c}= \int H \, dX^{c}, \quad 
\left(\int H \, dX \right)^{\tau}= \int H \, dX^{\tau}.
\end{equation}  
Since the mapping $H \mapsto \int H \, dX$ is continuous from $b\mathcal{P}(\Phi)$ into $S^{0}$, and the operations $z \mapsto z^{c}$ and $z \mapsto z^{\tau}$ are continuous from $S^{0}$ into $S^{0}$ (see \cite{Emery:1979}), then by \eqref{eqContinuPartAndStoppingWeakIntegral} the mappings $H \mapsto \int H \, dX^{c}$ and $H \mapsto \int H \, dX^{\tau}$ are continuous from $b\mathcal{P}(\Phi)$ into $S^{0}$.  Then $X^{c}$ and $X^{\tau}$ are $S^{0}$-good integrators. 
\end{proof}

The  next result shows that being a $S^{0}$-good integrator is a `local' property.  

\begin{theorem}\label{theoLocalGoodIntegrator}
Assume that  $\Phi$ is a complete bornological barrelled nuclear space (e.g. if $\Phi$ is a complete ultrabornological nuclear space). Let $X$ be a $\Phi'$-valued $(\mathcal{F}_{t})$-adapted process for which  there exist an increasing sequence  $(\tau_{n})$  of stopping times such that $\tau_{n} \rightarrow \infty$ $\Prob$-a.e. and a sequence  $(X^{n})$ of $S^{0}$-good integrators in $\Phi'$ such that for each $n \in \N$, $X^{\tau_{n}}=(X^{n})^{\tau_{n}}$. Then $X$  is a $S^{0}$-good integrator. 
\end{theorem}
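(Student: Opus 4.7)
The strategy is to build a candidate integral $\int H\, dX \in S^0$ by gluing the integrals $\int H\, dX^n$ along the localizing sequence $(\tau_n)$, then to lift the continuity of each $H \mapsto \int H\, dX^n$ (which holds since $X^n$ is a $S^0$-good integrator) to continuity of the limiting map through a local-to-global estimate on the F-seminorm $r_{em}$.

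For every $\phi \in \Phi$ and $n \in \N$,
$$\inner{X}{\phi}^{\tau_n} = \inner{X^{\tau_n}}{\phi} = \inner{(X^n)^{\tau_n}}{\phi} = \inner{X^n}{\phi}^{\tau_n}$$
is a real semimartingale, so by locality of real-valued semimartingales (together with $\tau_n \nearrow \infty$) $\inner{X}{\phi}$ is itself a real semimartingale; the same reasoning on paths shows that $X$ is càdlàg and regular. Now fix $H \in b\mathcal{P}(\Phi)$ and set $Z_n \defeq \int H\, dX^n \in S^0$. For $m \geq n$ the hypothesis combined with $\tau_m \wedge \tau_n = \tau_n$ gives $(X^m)^{\tau_n} = X^{\tau_n} = (X^n)^{\tau_n}$, whence by the stopping property \ref{eqWeakIntegStopping}
$$Z_m^{\tau_n} = \int H\, d(X^m)^{\tau_n} = \int H\, d(X^n)^{\tau_n} = Z_n^{\tau_n}.$$
There is therefore a unique càdlàg adapted process $Z$ with $Z^{\tau_n} = Z_n^{\tau_n}$ for every $n$, and $Z$ is a semimartingale by localization.

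The technical core is the estimate
$$r_{em}(Y) \leq A(\sigma) + r_{em}(Y^\sigma), \qquad A(\sigma) \defeq \sum_{m=1}^{\infty} 2^{-m}\, \Prob(\sigma \leq m),$$
valid for every $Y \in S^0$ and every stopping time $\sigma$. To prove it, note that for every $h \in \mathcal{E}_1$, $(h\cdot Y)^\sigma = h \cdot Y^\sigma$, and on $\{\sigma > m\}$ the paths of $h\cdot Y$ and $h \cdot Y^\sigma$ coincide on $[0,m]$, yielding
$$\Exp\!\left(1 \wedge \sup_{t \leq m}|(h \cdot Y)_t|\right) \leq \Prob(\sigma \leq m) + \Exp\!\left(1 \wedge \sup_{t \leq m}|(h \cdot Y^\sigma)_t|\right);$$
summing in $m$ against $2^{-m}$ and taking $\sup_{h \in \mathcal{E}_1}$ gives the claim. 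Since $A(\tau_n) \to 0$ when $\tau_n \nearrow \infty$ by dominated convergence, this yields the local-to-global principle: any family $(Y_\lambda) \subset S^0$ satisfying $Y_\lambda^{\tau_n} \to 0$ (respectively, is bounded) in $S^0$ for every $n$ also satisfies $Y_\lambda \to 0$ (respectively, is bounded) in $S^0$.

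By Proposition \ref{propGoodIntegraContPartStopping}, each $(X^n)^{\tau_n}$ is a $S^0$-good integrator, so $H \mapsto Z^{\tau_n} = \int H\, d(X^n)^{\tau_n}$ and $\phi \mapsto \inner{X}{\phi}^{\tau_n} = \inner{X^n}{\phi}^{\tau_n}$ are continuous into $S^0$ for each $n$. The local-to-global principle (together with the fact that $b\mathcal{P}(\Phi)$ inherits enough bornological structure from $\Phi$ to reduce continuity into the metrizable $S^0$ to its sequential/bounded form) then gives continuity of $H \mapsto Z$ from $b\mathcal{P}(\Phi)$ into $S^0$ and of $\phi \mapsto \inner{X}{\phi}$ from $\Phi$ into $S^0$. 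The integral $\int H\, dX$ is thus defined via Theorem 4.10 in \cite{FonsecaMora:StochInteg}, and for elementary $H = \sum h_k \phi_k$ formula \ref{eqActionWeakIntegSimpleIntegNuclear} combined with $\inner{X}{\phi_k}^{\tau_n} = \inner{X^n}{\phi_k}^{\tau_n}$ gives $(\int H\, dX)^{\tau_n} = Z^{\tau_n}$; density of elementary processes in $b\mathcal{P}(\Phi)$, continuity into the Hausdorff topology $(S^0)_{lcx}$ via \ref{continuityStochIntegConvexif}, and the locality gluing then identify $\int H\, dX = Z$. The main obstacle is the local-to-global estimate itself, which requires justifying that the supremum over $\mathcal{E}_1$ in the definition of $r_{em}$ interacts cleanly with stopping; a secondary subtlety is that $S^0$ is not locally convex, so the bornological hypothesis on $\Phi$ (rather than only barreledness) is essential in order to pass from the local/sequential form of continuity to full continuity into $S^0$.
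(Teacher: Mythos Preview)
Your proof is correct and follows essentially the same approach as the paper: reduce continuity into $S^{0}$ to sequential continuity via the bornological hypothesis, and pass from the stopped level to the full process via a local-to-global principle in the Emery topology. The paper simply cites this principle (Lemma 12.4.8 in \cite{CohenElliott}) rather than deriving the estimate $r_{em}(Y) \leq A(\sigma) + r_{em}(Y^\sigma)$ as you do, and it works directly with $\int H\, dX$ (which exists once $\phi \mapsto \inner{X}{\phi}$ is shown continuous) instead of first constructing $Z$ by gluing and then identifying it with the integral. Your detour through $Z$ is harmless but unnecessary: once $X:\Phi \to S^{0}$ is continuous, property \ref{eqWeakIntegStopping} already gives $\left(\int H\, dX\right)^{\tau_n} = \int H\, d(X^n)^{\tau_n}$ for every $H \in b\mathcal{P}(\Phi)$, not just elementary ones, so the density argument at the end can be dropped.
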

\begin{proof}
Given $\phi \in \Phi$, we have
\begin{equation}\label{eqSemimarSequenceStopping}
\inner{X^{\tau_{n}}}{\phi}=\inner{(X^n)^{\tau_{n}}}{\phi}=\inner{X^{n}}{\phi}^{\tau_{n}} \in S^{0}. 
\end{equation}
Hence by Theorem II.2.6 in \cite{Protter}, p.54, we have $\inner{X}{\phi} \in S^{0}$. Therefore $X$ is a $\Phi'$-valued semimartingale. 

Now we show that the mapping $X:\Phi \rightarrow S^{0}$ is continuous. Being $\Phi$ ultrabornological it suffices to show that this mapping is sequentially continuous
(see Theorem 2.1 and Proposition 4.1 in \cite{FerrerMoralesSanchezRuiz})

Assume that $\phi_{k} \rightarrow \phi$ in $\Phi$. Since the operation $z \mapsto z^{\tau_{n}}$ is continuous from $S^{0}$ into $S^{0}$ (see \cite{Emery:1979}), then by \eqref{eqSemimarSequenceStopping} for every $n \in \N$ we have $\inner{X^{\tau_{n}}}{\phi_{k}} \rightarrow \inner{X^{\tau_{n}}}{\phi}$ in $S^{0}$. Because  $(\tau_{n})$  is an increasing sequence of stopping times such that $\tau_{n} \rightarrow \infty$ $\Prob$-a.e. by Lemma 12.4.8 in \cite{CohenElliott}, p.279, we have  $\inner{X}{\phi_{k}} \rightarrow \inner{X}{\phi}$ in $S^{0}$.
Hence the mapping $X:\Phi \rightarrow S^{0}$ is sequentially continuous, therefore continuous. Moreover, by Theorem 3.7 and Proposition 3.14 in \cite{FonsecaMora:Semi} $X$ has a regular c\`{a}dl\`{a}g version.

To conclude that $X$ is a $S^{0}$-good integrator we must show that the stochastic integral mapping $H \mapsto \int H dX$ is continuous from $b\mathcal{P}(\Phi)$ into $S^{0}$. As before, since $\Phi$ is bornological it suffices to show the stochastic integral mapping is sequentially continuous. 

Let $H_{k} \rightarrow H$ in $b\mathcal{P}(\Phi)$. For each $k,n \in \N$, we have
$$ \left( \int H_{k} dX \right)^{\tau_{n}} = \int H_{k} dX^{\tau_{n}} = \int H_{k} d (X^{n})^{\tau_{n}}, $$
and similarly for $H$. Now because $X^{n}$ is a $S^{0}$-good integrator, by Proposition \ref{propGoodIntegraContPartStopping} $(X^{n})^{\tau_{n}}$ is also a $S^{0}$-good integrator. Then, since $H_{k} \rightarrow H$, for each $n \in \N$ we have
$$ d_{S^{0}}\left( \left( \int H_{k} dX \right)^{\tau_{n}} - \left( \int H dX \right)^{\tau_{n}} \right) = d_{S^{0}}\left( \int H_{k} d(X^{n})^{\tau_{n}} - \int H d(X^{n}) ^{\tau_{n}} \right) \rightarrow 0, $$
as $k \rightarrow \infty$.   Since $(\tau_{n})$  is an increasing sequence of stopping times such that $\tau_{n} \rightarrow \infty$ $\Prob$-a.e. by Lemma 12.4.8 in \cite{CohenElliott}, p.279, we have $\int H_{k} dX \rightarrow \int H dX$ in $S^{0}$. This shows the stochastic integral mapping associated to $X$ is sequentially continuous, therefore continuous. Thus $X$ is a $S^{0}$-good integrator. 
\end{proof}

As the usual practice, we say that a \emph{property $\pi$ hold locally} for a $\Phi'$-valued adapted process $X$ if  there exists an increasing sequence  $(\tau_{n})$  of stopping times such that $\tau_{n} \rightarrow \infty$ $\Prob$-a.e. and  $X^{\tau_{n}}$ has property $\pi$ for each $n \in \N$. 

As a direct consequence of Theorem \ref{theoLocalGoodIntegrator} we obtain the following:

\begin{corollary}\label{coroLocalGoodIntegrator}
Assume that  $\Phi$ is a complete bornological barrelled nuclear space. Let $X$ be an $(\mathcal{F}_{t})$-adapted process which is locally a $S^{0}$-good integrator in $\Phi'$.  Then $X$  is a $S^{0}$-good integrator. 
\end{corollary}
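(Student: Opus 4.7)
The plan is to obtain the corollary as an immediate specialization of Theorem \ref{theoLocalGoodIntegrator}. By hypothesis, $X$ is locally a $S^{0}$-good integrator, so by the definition of ``locally'' stated just above the corollary, there exists an increasing sequence $(\tau_{n})$ of stopping times with $\tau_{n} \to \infty$ $\Prob$-a.e. such that, for each $n \in \N$, the stopped process $X^{\tau_{n}}$ is a $S^{0}$-good integrator in $\Phi'$.

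I would then set $X^{n} \defeq X^{\tau_{n}}$ for every $n \in \N$. This gives a sequence of $S^{0}$-good integrators in $\Phi'$, and the required compatibility condition in the hypothesis of Theorem \ref{theoLocalGoodIntegrator} is trivially verified, since
\[
(X^{n})^{\tau_{n}} \,=\, (X^{\tau_{n}})^{\tau_{n}} \,=\, X^{\tau_{n}},
\]
using only that stopping twice with the same stopping time is equivalent to stopping once (equivalently, $\tau_{n} \wedge \tau_{n}=\tau_{n}$). Hence all the assumptions of Theorem \ref{theoLocalGoodIntegrator} are satisfied with this choice of $(X^{n})$ and $(\tau_{n})$, and the theorem yields that $X$ is itself a $S^{0}$-good integrator.

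There is essentially no obstacle in this argument: the corollary is a direct packaging of the theorem in the language of local properties, and the only step is to observe that the localizing sequence provided by the ``locally $S^{0}$-good integrator'' hypothesis can be used as both the localizing sequence and the source of approximating good integrators in Theorem \ref{theoLocalGoodIntegrator}. The substantive content (continuity of $X:\Phi\to S^{0}$ and of the stochastic integral mapping $b\mathcal{P}(\Phi)\to S^{0}$, via sequential continuity and the bornological hypothesis on $\Phi$) has already been carried out in the proof of that theorem.
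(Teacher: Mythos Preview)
Your proposal is correct and matches the paper's approach exactly: the paper simply states the corollary as ``a direct consequence of Theorem \ref{theoLocalGoodIntegrator}'' without giving a separate proof, and your argument (setting $X^{n}\defeq X^{\tau_{n}}$ and noting $(X^{n})^{\tau_{n}}=X^{\tau_{n}}$) is precisely the obvious specialization being invoked.
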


The result in Corollary \ref{coroLocalGoodIntegrator} is of great application for concrete examples as it reduces the problem of checking that a given process is a $S^{0}$-good integrator to check that this property hold locally. Since we already know some important examples of $S^{0}$-good integrators, we obtain the following:

\begin{corollary}\label{coroConcreteLocalGoodIntegrators} Assume that  $\Phi$ is a complete bornological barrelled nuclear space. In each of the following situations a $\Phi'$-valued $(\mathcal{F}_{t})$-adapted process $X$ is a $S^{0}$-good integrator:
\begin{enumerate}
\item If $X$ is locally a $\mathcal{H}^{p}_{S}$-semimartingale.  
\item If $X$ is locally a $\mathcal{M}_{\infty}^{2}$-martingale. 
\item If $X$ is locally a $\mathcal{A}$-semimartingale.
\end{enumerate}
\end{corollary}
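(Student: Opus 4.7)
The plan is to derive the three statements as direct applications of Corollary \ref{coroLocalGoodIntegrator}, using the three examples of $S^{0}$-good integrators listed immediately after the definition. By hypothesis, in each case there exists an increasing sequence of stopping times $(\tau_{n})$ with $\tau_{n} \rightarrow \infty$ $\Prob$-a.e.\ such that $X^{\tau_{n}}$ is a $\mathcal{H}^{p}_{S}$-semimartingale, a $\mathcal{M}^{2}_{\infty}$-martingale, or a $\mathcal{A}$-semimartingale in $\Phi'$ respectively. It therefore suffices to verify that each stopped process $X^{\tau_{n}}$ is itself a $S^{0}$-good integrator, for then $X$ is locally a $S^{0}$-good integrator and Corollary \ref{coroLocalGoodIntegrator} yields the result.

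To show $X^{\tau_{n}}$ is a $S^{0}$-good integrator I would invoke the corresponding example from the list following the definition (originally Proposition 4.12 in \cite{FonsecaMora:StochInteg}). Those examples apply provided two conditions hold: that $X^{\tau_{n}}$ belongs to the relevant class, which is granted by the choice of localizing sequence; and that the induced mapping $\phi \mapsto \inner{X^{\tau_{n}}}{\phi}$ is continuous from $\Phi$ into $S^{0}$. The first condition comes for free from the hypothesis, so only the second requires work.

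The continuity of $\phi \mapsto \inner{X^{\tau_{n}}}{\phi}$ is the step I expect to be the main obstacle. Here I would appeal to the closed graph theorem: $\Phi$ is complete, bornological, and barrelled, hence ultrabornological, and each of $\mathcal{H}^{p}_{S}$, $\mathcal{M}^{2}_{\infty}$, $\mathcal{A}$ is a Banach space that embeds continuously into $S^{0}$. To close the graph, if $\phi_{k} \rightarrow \phi$ in $\Phi$ and $\inner{X^{\tau_{n}}}{\phi_{k}} \rightarrow z$ in the relevant Banach norm, then the convergence passes to $S^{0}$ and further to $L^{0}\ProbSpace$ at each time $t$, where pointwise linearity in $\phi$ forces $z_{t} = \inner{X^{\tau_{n}}_{t}}{\phi}$ $\Prob$-a.s.\ By c\`adl\`ag regularity this identifies $z$ with $\inner{X^{\tau_{n}}}{\phi}$, and the closed graph theorem then yields continuity into the Banach space, hence into $S^{0}$. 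The cited examples then deliver the $S^{0}$-good integrator property for each $X^{\tau_{n}}$, and Corollary \ref{coroLocalGoodIntegrator} concludes the argument.
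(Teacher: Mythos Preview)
Your approach is correct and matches the paper's intended argument: the paper gives no explicit proof, simply presenting the corollary as an immediate consequence of Corollary~\ref{coroLocalGoodIntegrator} together with the three examples of $S^{0}$-good integrators listed after the definition (Proposition 4.12 in \cite{FonsecaMora:StochInteg}). Your closed-graph verification that $\phi \mapsto \inner{X^{\tau_{n}}}{\phi}$ is continuous into the relevant Banach space (and hence into $S^{0}$) fills in a detail the paper leaves implicit, since those cited examples do carry the continuity hypothesis; the argument you give is sound and is the natural way to justify that step.
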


\begin{example}\label{examSquareInteMartGoodIntegr}
 Let $\Phi$ be a complete bornological barrelled nuclear space.   A $\Phi'$-valued \emph{square integrable martingale} is a  $(\mathcal{F}_{t})$-adapted process $M=(M_{t}: t \geq 0)$ such that for each $\phi \in \Phi$ we have $\inner{M}{\phi}$ is a real-valued square integrable martingale. Clearly, any $\mathcal{M}_{\infty}^{2}$-martingale is a  $\Phi'$-valued square integrable martingale but the converse does not hold in general. 

It is clear that $M$ being a $\Phi'$-valued square integrable martingale is a $\Phi'$-valued $(\mathcal{F}_{t})$-adapted semimartingale. We will show that if for each $t \geq 0$ the random variable $M_{t}$ has a Radon probability distribution, then $M$ is a $S^{0}$-good integrator. 

In effect, if  $M_{t}$ has a Radon distribution by Theorem 2.10 in \cite{FonsecaMora:Existence}   the  mapping $M_{t}: \Phi \rightarrow L^{0}(\Omega, \mathcal{F}, \Prob)$ is continuous. Then by the arguments used in the proof of Theorem  5.2 in \cite{FonsecaMora:Existence} for every $T>0$ the family $(M_{t}: t \in [0,T])$ is equicontinuous from $\Phi$ into $L^{0}(\Omega, \mathcal{F}, \Prob)$, and by 
Proposition 3.14 in \cite{FonsecaMora:Semi} the mapping $M:\Phi \rightarrow S^{0}$ is continuous. Hence, if we consider an increasing sequence of positive real-valued numbers $(T_{n})$ such that $T_{n} \rightarrow \infty$, then for each $n \in \N$ we have $M^{T_{n}}$ is a $\mathcal{M}_{\infty}^{2}$-martingale. Therefore, $M$ is locally a $\mathcal{M}_{\infty}^{2}$-martingale hence a  $S^{0}$-good integrator  by Corollary \ref{coroConcreteLocalGoodIntegrators}. 

If for example $\Phi'$ is a Suslin space, then the probability distribution of each $M_{t}$ is a Radon measure on $\Phi'$ (see \cite{BogachevMT}, Theorem 7.4.3, p.85). In particular, the spaces $\mathscr{E}'$, $\mathscr{S}'(\R^{d})$, $\mathscr{D}'(\R^{d})$ are all Suslin (\cite{SchwartzRM}, p.115). Therefore, a square integrable martingale taking values in any of these spaces is a $S^{0}$-good integrator. 
In particular, this example shows that any $\mathscr{S}'(\R^{d})$-valued Wiener process (see \cite{Ito}) is a $S^{0}$-good integrator.  
\end{example}

A $\Phi'$-valued \emph{locally square integrable martingale} is a $(\mathcal{F}_{t})$-adapted process $M$ such that for each $\phi \in \Phi$ we have $\inner{M}{\phi}$ is locally a square integrable martingale (the localizing sequence depends on $\phi$). In general, it is not clear if such an $M$ is a $S^{0}$-good integrator. However, if one can find a   localizing sequence $(\tau_{n}: n \in \N)$ (not depending on $\phi$) such that for each $n \in \N$ and $\phi \in \Phi$ we have $\inner{M^{\tau_{n}}}{\phi}$ is a square integrable martingale, then by Corollary \ref{coroLocalGoodIntegrator} and Example \ref{examSquareInteMartGoodIntegr} we have $M$ is a $S^{0}$-good integrator. An example of this situation is given below.

\begin{example}
Let $m=(m_{t}: t \geq 0)$ denote a real-valued continous local martingale, i.e. $m \in \mathcal{M}^{c}_{loc}$. Let $(\tau_{n}:n \in \N)$ be a localizing sequence for $m$ such that for each $n \in \N$, $m^{\tau_{n}} \in \mathcal{M}^{2}_{\infty}$.  For every $t \geq 0$ define 
$$X_{t}(\phi)= \int_{0}^{t} \phi(s) dm_{s}, \quad \forall \phi \in \mathscr{S}(\R). $$
Using the properties of the stochastic integral we have for each $t \geq 0$ that  $X_{t}: \mathscr{S}(\R) \rightarrow \mathcal{M}^{c}_{loc}$ is continuous. Therefore, by Proposition 3.12 in \cite{FonsecaMora:Semi}  we have $X=(X_{t}: t \geq 0)$ defines a $\mathscr{S}'(\R)$-valued local martingale with continuous paths and Radon distributions. Moreover, for every $n \in \N$, we have
$$ \inner{X_{t}^{\tau_{n}}}{\phi}= \left( \int_{0}^{t} \phi(r) dm_{r} \right)^{\tau_{n}} =   \int_{0}^{t} \phi(r) dm^{\tau_{n}}_{r} \in \mathcal{M}_{\infty}^{2}.$$
Hence, $X$ is locally a $\mathcal{M}_{\infty}^{2}$-martingale. By Corollary \ref{coroConcreteLocalGoodIntegrators}, $X$ is a $S^{0}$-good integrator.  
\end{example}

In many situations we will be able to show that a $\Phi'$-valued semimartingale $X$ is locally a semimartingale in some Hilbert space $\Phi'_{p}$. As we shall see below (Theorem \ref{theoLocalHilbertSemiIsGoodIntegra}), this property implies that $X$ is a $S^{0}$-good integrator. In view of Corollary \ref{coroLocalGoodIntegrator}, the main step is to show that a $\Phi'_{p}$-valued semimartingale is a $S^{0}$-good integrator. This is proved in the next result.

\begin{lemma}\label{lemmaHilbertSemiIsGoodIntegrator} Assume $\Phi$ is either a Fr\'{e}chet nuclear space or the strict inductive limit of Fr\'{e}chet nuclear spaces.
Let $p$ be a continuous Hilbertian seminorm on $\Phi$ and assume that $X=(X_{t}: t\geq 0)$ is a $\Phi'_{p}$-valued c\`{a}dl\`{a}d adapted semimartingale. Let $Y=(Y_{t}: t\geq 0)$ given by $Y_{t}=i'_{p}X_{t}$. Then $Y$ is a $S^{0}$-good integrator.  
\end{lemma}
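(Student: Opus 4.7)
The plan is to verify the two defining properties of a $S^{0}$-good integrator separately: the continuity of $Y\colon \Phi \to S^{0}$ will follow from a closed graph argument exploiting the Hilbert structure of $\Phi_{p}$, and the continuity of the stochastic integral map will be obtained by localizing $X$ into an $\mathcal{H}^{1}_{S}$-semimartingale and applying Corollary~\ref{coroLocalGoodIntegrator}.

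First I would show that $Y\colon \Phi \to S^{0}$ is continuous. The identity $\inner{Y_{t}}{\phi} = \inner{X_{t}}{i_{p}\phi}$ exhibits $Y$ as $X \circ i_{p}$, so it suffices to prove that $X\colon \Phi_{p} \to S^{0}$ is continuous. I would apply the closed graph theorem, using that $\Phi_{p}$ is Banach (hence Fr\'echet) and $S^{0}$ is a complete metrizable topological vector space. For closedness: take $\psi_{n} \to \psi$ in $\Phi_{p}$ with $\inner{X}{\psi_{n}} \to z$ in $S^{0}$; then at each $t$, convergence in $S^{0}$ gives $\inner{X_{t}}{\psi_{n}} \to z_{t}$ in probability, while the Hilbert duality bound $\abs{\inner{X_{t}}{\psi_{n}-\psi}} \le \norm{X_{t}}_{\Phi'_{p}}\norm{\psi_{n}-\psi}_{\Phi_{p}}$ gives a.s.\ convergence to $\inner{X_{t}}{\psi}$, so $z_{t} = \inner{X_{t}}{\psi}$ a.s., and the c\`adl\`ag property of both processes forces indistinguishability. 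Composition with the continuous inclusion $i_{p}$ gives $Y\colon \Phi \to S^{0}$ continuous.

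Next I would invoke the classical Hilbert-valued semimartingale theory of M\'etivier-Pellaumail to decompose $X = X_{0} + M + A$ with $M$ a $\Phi'_{p}$-valued c\`adl\`ag local martingale and $A$ a c\`adl\`ag adapted process of finite variation in $\Phi'_{p}$. Choose stopping times $\sigma_{n} \uparrow \infty$ so that $M^{\sigma_{n}}$ is $L^{2}$-bounded in Hilbert norm and $A^{\sigma_{n}}$ has $L^{1}$-bounded total variation. For each $\phi\in\Phi$, Cauchy-Schwarz on the dual pairing yields $\norm{\inner{M^{\sigma_{n}}}{i_{p}\phi}}_{\mathcal{M}^{2}_{\infty}} \le C_{n}\, p(\phi)$ and $\norm{\inner{A^{\sigma_{n}}}{i_{p}\phi}}_{\mathcal{A}} \le C'_{n}\, p(\phi)$, so that (via Doob/BDG) $\norm{\inner{Y^{\sigma_{n}}}{\phi}}_{\mathcal{H}^{1}_{S}} \le D_{n}\, p(\phi)$. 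Therefore $Y^{\sigma_{n}}\colon \Phi \to \mathcal{H}^{1}_{S}$ is linear and continuous, and Proposition~4.12 of \cite{FonsecaMora:StochInteg} makes each $Y^{\sigma_{n}}$ a $S^{0}$-good integrator. Because $\Phi$ is complete, bornological, barrelled and nuclear (Fr\'echet nuclear or a strict inductive limit of such), Corollary~\ref{coroLocalGoodIntegrator} finally delivers that $Y$ itself is a $S^{0}$-good integrator.

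The main technical hurdle is bringing in the Hilbert-valued semimartingale decomposition: the paper's cylindrical definition of $\Phi'_{p}$-valued semimartingale must be matched with the classical M\'etivier decomposition in the Hilbert space $\Phi'_{p}$, and the localizing times $\sigma_{n}$ must be chosen to control both the martingale and finite-variation parts simultaneously. Once the decomposition is in hand, the remaining duality estimates and the appeal to the localization corollary are routine bookkeeping.
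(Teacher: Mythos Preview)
Your proof is correct, and the first step---continuity of $X:\Phi_{p}\to S^{0}$ via the closed graph theorem, hence of $Y=X\circ i_{p}$---matches the paper exactly. The second step, however, differs in strategy. The paper does not decompose $X$; instead it invokes M\'etivier's control process $A$ (Theorem~23.14 in \cite{Metivier}) to obtain the inequality
\[
\Exp\Bigl[\sup_{0\le t<\tau}\Bigl|\int_{0}^{t} H\,dX\Bigr|^{2}\Bigr]\le \Exp\Bigl[A_{\tau-}\int_{0}^{\tau-} p(H(r))^{2}\,dA_{r}\Bigr],
\]
localizes via $\tau_{n}=\inf\{t:|A_{t}|^{2}\ge n\}$, and gets a direct Chebyshev bound showing the integral map is continuous from elementary processes into $(\mathbb{D},ucp)$; density plus Theorem~\ref{theoCharactGoodIntegrator} then finishes. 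Your route instead decomposes $X=X_{0}+M+A$, localizes so that $Y^{\sigma_{n}}$ is an $\mathcal{H}^{1}_{S}$-semimartingale, and appeals to Proposition~4.12 of \cite{FonsecaMora:StochInteg} together with Corollary~\ref{coroLocalGoodIntegrator}. Both arguments ultimately rest on M\'etivier's Hilbert-space theory, but yours is more modular (reducing to results already stated in this section), while the paper's is more self-contained at the cost of reproving a quantitative bound. One minor point: in your localization you should also ensure $X_{0}$ is handled (e.g.\ by including $\{\|X_{0}\|\le n\}$ in the definition of $\sigma_{n}$ or treating the constant part separately), since a priori $X_{0}$ need not be integrable; this is routine but worth noting.
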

\begin{proof}
Since $\Phi'_{p}$ is a Hilbert space, by an application of the closed graph theorem one can conclude that the mapping $X:\Phi'_{p} \rightarrow S^{0}$, $\phi \mapsto X(\phi)=\inner{X}{\phi}$, is linear and continuous. Hence $Y_{t} = i'_{p} \circ X_{t} $ is a  $\Phi'$-valued regular c\`{a}dl\`{a}d  $(\mathcal{F}_{t})$-semimartingale and the mapping $Y:\Phi \rightarrow S^{0}$ is continuous from $\Phi$ into $S^{0}$. It remains to show that $Y$ is a $S^{0}$-good integrator.

Indeed, since $X$ is a $\Phi'_{p}$-valued semimartingale, by  Theorem 23.14 in \cite{Metivier} $X$  admits a control process $A$, i.e. $A$ is an increasing, positive, adapted process such that for every stopping time $\tau$ and every $\Phi_{p}$-valued predictable elementary process $H$, one has
\begin{equation}\label{eqControlProcessHilbertSemim}
\Exp \left[ \sup_{0 \leq t < \tau} \abs{\int_{0}^{t} H(r) dX_{r} }^{2} \right] \leq \Exp \left[ A_{\tau-}  \int_{0}^{\tau-} p(H(r))^{2} dA_{r}\right]. 
\end{equation} 

Now  if $H$ is a $\Phi$-valued predictable elementary process. Then $i_{p}H$ is a $\Phi_{p}$-valued predictable elementary process, where
$$ p(i_{p} H(t,\omega))=p(H(t,\omega)), \quad \forall t \geq 0, \omega \in \Omega. $$ 
 Observe moreover that if $H(t,\omega)=\sum_{k=1}^{n} h_{k}(t,\omega) \phi_{k}$, for $h_{k} \in b \mathcal{P}$, $\phi_{k} \in \Phi$, then 
 $$ \int H dY = \sum_{k=1}^{n} h_{k} \cdot Y(\phi_{k}) = \sum_{k=1}^{n} h_{k} \cdot X(i_{p}  \phi_{k}) = \int i_{p} H dX. $$

Let $\epsilon>0$. For every $n \in \N$, let $\tau_{n}=\inf\{ t \geq 0: \abs{A_{t}}^{2} \geq n \}$. Then  $(\tau_{n})$ is an increasing sequence of stopping times such that $\tau_{n} \rightarrow \infty$ $\Prob$-a.e. Then for each $\Phi$-valued elementary predictable process $H$ we have by \eqref{eqControlProcessHilbertSemim}  that
\begin{eqnarray*}
\Prob \left( \sup_{0 \leq t < \tau_{n}} \abs{\int_{0}^{t} H(r) dY_{r} } \geq \epsilon \right) 
& \leq & \frac{1}{\epsilon^{2}} \Exp \left[ \sup_{0 \leq t < \tau_{n}} \abs{\int_{0}^{t} i_{p} H(r) dX_{r} }^{2} \right] \\
& \leq & \frac{1}{\epsilon^{2}} \Exp \left[ A_{\tau_{n}-}  \int_{0}^{\tau_{n}-} p(i_{p} H(r))^{2} dA_{r}\right] \\
& \leq & \frac{n}{\epsilon^{2}} \sup_{(t,\omega)} p(i_{p} H(t,\omega))^{2}.  
\end{eqnarray*} 
From the above and since $\tau_{n} \rightarrow \infty$, we conclude that the mapping $H \mapsto \int H dX$ is continuous from the space of $\Phi$-valued predictable elementary processes equipped with the topology of uniform convergence on $[0,\infty) \times \Omega$ into the space $(\mathbb{D}, ucp)$. By a density argument, this mapping admits a linear and continuous extension from $b\mathcal{P}(\Phi)$ into the space $(\mathbb{D}, ucp)$. By Theorem \ref{theoCharactGoodIntegrator} we conclude that $Y$ is a $S^{0}$-good integrator. 
\end{proof}

\begin{definition}\label{defiLocallyHilbertianSemimartingale}
Assume that $X=(X_{t}: t \geq 0)$ is a $\Phi'$-valued process for which there exists  an increasing sequence  $(\gamma_{n}: n \in \N)$  of stopping times such that $\gamma_{n} \rightarrow \infty$ $\Prob$-a.e., and there exists and increasing sequence $(q_{n}: n \in \N)$ of continuous Hilbertian seminorms on $\Phi$, such that for each $n \in \N$ the process $X^{\gamma_{n}}$ possesses an indistinguishable version which is a $\Phi'_{q_{n}}$-valued c\`{a}dl\`{a}d adapted semimartingale. In the above situation we will say that $X$ is \emph{locally a Hilbertian semimartingale} in $\Phi'$ with corresponding sequence of stopping times $(\tau_{n})$ and Hilbertian seminorms $(q_{n})$. 
\end{definition}

\begin{theorem}\label{theoLocalHilbertSemiIsGoodIntegra}
Assume $\Phi$ is either a Fr\'{e}chet nuclear space or the strict inductive limit of Fr\'{e}chet nuclear spaces and let $X$ be locally a Hilbertian semimartingale in $\Phi'$ with corresponding sequence of stopping times $(\tau_{n})$ and Hilbertian seminorms $(q_{n})$.  Then $X$ is a $S^{0}$-good integrator. 
\end{theorem}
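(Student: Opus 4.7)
The plan is to combine Lemma \ref{lemmaHilbertSemiIsGoodIntegrator} (which produces $S^{0}$-good integrators out of Hilbert-space-valued semimartingales) with the localization result Theorem \ref{theoLocalGoodIntegrator}. First I would check that the standing hypotheses on $\Phi$ suffice to apply the latter: a Fr\'{e}chet nuclear space is metrizable (hence bornological), complete and barrelled, and a strict inductive limit of Fr\'{e}chet nuclear spaces inherits completeness from the closedness of the steps, and is bornological and barrelled because these properties pass to inductive limits. Thus in either case $\Phi$ is complete, bornological, barrelled and nuclear, so Theorem \ref{theoLocalGoodIntegrator} applies.

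Next, for each $n \in \N$ the hypothesis that $X$ is locally a Hilbertian semimartingale in $\Phi'$ gives a $\Phi'_{q_{n}}$-valued c\`{a}dl\`{a}g adapted semimartingale $Z^{n}$ such that $i'_{q_{n}} Z^{n}$ is indistinguishable from $X^{\tau_{n}}$ as a $\Phi'$-valued process. I would then apply Lemma \ref{lemmaHilbertSemiIsGoodIntegrator} with $p = q_{n}$ to $Z^{n}$ in order to produce a $\Phi'$-valued process $Y^{n} \defeq i'_{q_{n}} Z^{n}$ which is a $S^{0}$-good integrator.

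Finally, since $Y^{n}$ is by construction indistinguishable from $X^{\tau_{n}}$, we have the identity $(Y^{n})^{\tau_{n}} = X^{\tau_{n}}$ (up to indistinguishability). Together with the assumption that $(\tau_{n})$ is an increasing sequence of stopping times with $\tau_{n} \rightarrow \infty$ $\Prob$-a.e., this verifies exactly the hypotheses of Theorem \ref{theoLocalGoodIntegrator}. Invoking that result gives that $X$ itself is a $S^{0}$-good integrator, completing the proof.

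I do not expect a real obstacle here: the theorem is essentially a packaging of Lemma \ref{lemmaHilbertSemiIsGoodIntegrator} and Theorem \ref{theoLocalGoodIntegrator}. The only point requiring a moment of care is the bookkeeping of identifying $X^{\tau_{n}}$ with the $\Phi'$-valued image of $Z^{n}$ under $i'_{q_{n}}$, so that the output of the lemma can be used as the approximating sequence of $S^{0}$-good integrators demanded by the localization theorem.
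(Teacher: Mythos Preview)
Your proposal is correct and follows essentially the same route as the paper: apply Lemma \ref{lemmaHilbertSemiIsGoodIntegrator} to each localized piece to obtain $S^{0}$-good integrators, then invoke the localization principle. The paper compresses this into two lines, using Corollary \ref{coroLocalGoodIntegrator} (the special case where $X^{\tau_{n}}$ itself is a good integrator) rather than the more general Theorem \ref{theoLocalGoodIntegrator}, but your added care in checking the bornological/barrelled hypotheses and in identifying $X^{\tau_{n}}$ with $i'_{q_{n}}Z^{n}$ is entirely appropriate.
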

\begin{proof}
Given $n \in \N$, by Lemma \ref{lemmaHilbertSemiIsGoodIntegrator} we have $X^{\tau_{n}}$ is a $\Phi'$-valued regular c\`{a}dl\`{a}d  $(\mathcal{F}_{t})$-semimartingale which is a $S^{0}$-good integrator. Then by Corollary \ref{coroLocalGoodIntegrator} we have that $X$ is a $S^{0}$-good integrator.
\end{proof}

\begin{example}
\label{examDiracGoodIntegrator}

Let $z=(z_{t}: t \geq 0)$ be a $\R^{d}$-valued semimartingale. For $t \geq 0$ and $\omega \in \Omega$ we define a linear mapping on $\mathscr{E}(R^{d})$ by the prescription:
$$X_{t}(\omega)(\phi)=\delta_{z_{t}(\omega)}(\phi)=\phi(z_{t}(\omega)), \quad \, \forall \, \phi \in  \mathscr{E}(\R^{d}).$$ 
By It\^{o}'s formula, we have $(X_{t}(\phi): t \geq 0) \in S^{0}$ for every $ \phi \in  \mathscr{E}(\R^{d})$. Moreover, for every $t \geq 0$ the mapping $X_{t}$ is continuous from $\mathscr{E}(\R^{d})$ into $L^{0} \ProbSpace$. By Corollary 3.8 in \cite{FonsecaMora:Semi} $X=(X_{t}: t \geq 0)$ defines a $\mathscr{E}'(\R^{d})$-valued regular  c\`{a}dl\`{a}g $(\mathcal{F}_{t})$-semimartingale for which the mapping $X: \mathscr{E}(\R^{d}) \rightarrow S^{0}$ is continuous. 

We will show that $X$ is a $S^{0}$-good integrator. In effect, for every $n \in \N$, let $\tau_{n}=\inf \{ t \geq 0: \norm{z_{t}} >n  \} \wedge n$. Then, $(\tau_{n}: n \in \N)$ is a increasing sequence of stopping times such that $\tau_{n} \rightarrow \infty $ $\Prob$-a.e. 
For fixed $n \in \N$, we have
$$ X^{\tau_{n}}_{t}(\phi)= \phi(z_{t \wedge \tau_{n}}), \quad \forall t\geq 0, \, \phi \in  \mathscr{E}(\R^{d}).$$
Moreover, for every $\phi \in \mathscr{E}(\R^{d})$ we have $\Exp  \left[ \sup_{t \geq 0} \abs{X^{\tau_{n}}_{t}(\phi)}^{2} \right]< \infty$, this because
$(\phi(z_{t \wedge \tau_{n}}): t \geq 0)$ is a bounded process. In particular, the mapping $\varrho:  \mathscr{E}(\R^{d}) \rightarrow \R_{+}$ given by
$$ \varrho(\phi)= \left( \Exp  \left[ \sup_{t \geq 0} \abs{X^{\tau_{n}}_{t}(\phi)}^{2} \right] \right)^{1/2}, \quad \forall \, \phi \in \mathscr{E}(\R^{d}), $$
defines a seminorm on $\mathscr{E}(\R^{d})$. With the help of Fatou's lemma one can show $\varrho$ is sequentially lower-semicontinuous, hence continuous since $\mathscr{E}(\R^{d})$ is ultrabornological. 
Then, by Theorem 4.3 in \cite{FonsecaMora:Existence} there exists a continuous Hilbertian seminorm $p$ on $\mathscr{E}(\R^{d})$, $\varrho \leq p$,  such that  $X^{\tau_{n}}$
has an indistinguishable  $\mathscr{E}'(\R^{d})_{p}$-valued c\`{a}dl\`{a}g version $ \tilde{X}^{\tau_{n}}$ satisfying $ \Exp \left[ \sup_{t \geq 0} p'(\tilde{X}^{\tau_{n}}_{t})^{2} \right]< \infty$. Observe that this version $\tilde{X}^{\tau_{n}}$ is only a cylindrical semimartingale, however, if we choose a continuous Hilbertian seminorm $q$ on $\mathscr{E}(\R^{d})$, $p \leq q$, satisfying $i_{p,q}$ is Hilbert-Schmidt, then by Theorem A in \cite{JakubowskiEtAl:Radonification} we have $Y^{n}=i'_{p,q}\tilde{X}^{\tau_{n}}$ is a  $\mathscr{E}'(\R^{d})_{q}$-valued c\`{a}dl\`{a}g semimartingale which is an indistinguishable version of $X^{\tau_{n}}$. 

We have proved that $X$ is locally a Hilbertian semimartingale in $\mathscr{E}'(\R)$. Then by Theorem \ref{theoLocalHilbertSemiIsGoodIntegra} we have $X$ is a $S^{0}$-good integrator.    
\end{example}

\begin{definition} A $\Phi'$-valued process $X=( X_{t} : t\geq 0)$ is called a \emph{L\'{e}vy process} if 
\begin{enumerate}[label=(\roman*)]
\item  $X_{0}=0$ a.s., 
\item $X$ has \emph{independent increments}, i.e. for any $n \in \N$, $0 \leq t_{1}< t_{2} < \dots < t_{n} < \infty$ the $\Phi'$-valued random variables $X_{t_{1}},X_{t_{2}}-X_{t_{1}}, \dots, X_{t_{n}}-X_{t_{n-1}}$ are independent,  
\item L has \emph{stationary increments}, i.e. for any $0 \leq s \leq t$, $X_{t}-X_{s}$ and $X_{t-s}$ are identically distributed, and  
\item For every $t \geq 0$ the distribution $\mu_{t}$ of $X_{t}$ is a Radon measure and the mapping $t \mapsto \mu_{t}$ from $\R_{+}$ into the space $\goth{M}_{R}^{1}(\Phi')$ of Radon probability measures on $\Phi'$ is continuous at $0$ when $\goth{M}_{R}^{1}(\Phi')$  is equipped with the weak topology. 
\end{enumerate}
\end{definition}

Basic properties of L\'evy processes in duals of nuclear spaces has been investigated in \cite{FonsecaMora:Levy}. 
The next example shows some classes of nuclear Fr\'echet spaces for which every L\'evy process is a $S^{0}$-good integrator. 

\begin{example}\label{exampleLevyGoodIntegCHNS}
We consider the following class of countably Hilbertian nuclear spaces  taken from \cite{KallianpurXiong}, Example 1.3.2. 

Let $(H, \inner{\cdot}{\cdot}_{H})$ be a separable Hilbert space and $-L$ be a closed densely defined self-adjoint operator on $H$ such that $\inner{-L \phi}{\phi}_{H} \leq 0$ for each $\phi \in \mbox{Dom}(J)$.  Assume moreover that  some power of the resolvent of $L$ is a Hilbert-Schmidt operator, i.e. there exists some $\beta$ such that $(\lambda I+ L)^{-\beta}$ is Hilbert-Schmidt. Then, there exists a complete orthonormal set $(\phi_{j}: j \in \N) \subseteq H$ and a sequence $0 \leq \lambda_{1} \leq \lambda_{2} \leq \cdots $ such that  $L \phi_{j} = \lambda_{j} \phi_{j}$, $j \in \N$, and 
\begin{equation}\label{eqSumEigenvaluesExampleCHNS}
\sum_{j=1}^{\infty} (1+\lambda_{j})^{-2\beta} < \infty. 
\end{equation}
Let 
$$ \Phi \defeq \left\{ \phi \in H: \sum_{j=1}^{\infty} (1+\lambda_{j})^{2r} \inner{\phi}{\phi_{j}}^{2}_{H} < \infty, \, \forall \,  r \in \R  \right\},$$
and define an inner product $\inner{\cdot}{\cdot}_{r}$ on $\Phi$ by 
$$ \inner{\phi}{\psi}_{r} \defeq \sum_{j=1}^{\infty} (1+\lambda_{j})^{2r} \inner{\phi}{\phi_{j}}_{H} \inner{\psi}{\phi_{j}}_{H},  $$
with corresponding Hilbertian norm $\norm{\phi}_{r}^{2} \defeq \inner{\phi}{\phi}_{r}$. If $\Phi_{r}$ denotes the completion of $\Phi$ with respect to $\norm{\cdot}_{r}$, then $\Phi_{0}=H$ and $\Phi_{s} \subseteq \Phi_{r}$ for $s \leq r$ since  $\norm{\cdot}_{r} \leq \norm{\cdot}_{s}$. Furthermore,  by \eqref{eqSumEigenvaluesExampleCHNS} the canonical inclusion from $\Phi_{s}$ into $\Phi_{r}$ is Hilbert-Schmidt for $s \geq r+ \beta$. Since $\Phi=\cap_{r} \Phi_{r}$, if $\Phi$ is equipped with the topology generated by the family $(\norm{\cdot}_{r}, r \in \N)$, we have $\Phi$ is a countably Hilbertian nuclear space. 

With $\Phi$ as constructed above, let $X=(X_{t}: t \geq 0)$ be a $\Phi'$-valued L\'evy process. It is shown in (\cite{PerezAbreuRochaArteagaTudor:2005}, Theorem 4) that for every $T>0$ there exists some  $r_{T} \in \N$ such that $(X_{t}: 0 \leq t \leq T)$ has a version which is a $\Phi'_{r_{T}}$-valued L\'evy process. Therefore, by Theorem \ref{theoLocalHilbertSemiIsGoodIntegra} we have $X$ is a $S^{0}$-good integrator. 
\end{example}

\begin{remark}
It is well-known that the Schwartz space of rapidly decreasing functions $\mathscr{S}(\R)$ can be generated by following the procedure of Example \ref{exampleLevyGoodIntegCHNS} (see Remark 1.3.5 in \cite{KallianpurXiong}). As a consequence, every  $\mathscr{S}'(\R)$-valued L\'evy process is a $S^{0}$-good integrator.  
\end{remark}

As the next result shows, the property of being a $S^{0}$-good integrator is preserved under the image of a continuous linear operator. 

\begin{proposition}\label{propContinuousImageGoodIntegrator}
Let $\Phi$ and $\Psi$ be two complete barrelled nuclear spaces and let $A \in \mathcal{L}(\Phi',\Psi')$. If $X$ is a $S^{0}$-good integrator in $\Phi'$, then $Y=(A(X_{t}): t \geq 0)$ is a $S^{0}$-good integrator in $\Psi'$ and 
\begin{equation}\label{eqIntegralOfContiImageGoodIntegrator}
\int \, H \, dY = \int \, A'(H) \, dX, \quad \forall H \in b\mathcal{P}(\Psi).
\end{equation}
\end{proposition}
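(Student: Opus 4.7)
The plan is to verify first that $Y$ is a well-defined $\Psi'$-valued semimartingale for which the induced mapping $Y : \Psi \to S^{0}$ is continuous, and then to establish the integral identity on the dense class of elementary processes and extend by continuity.

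First I would observe that $A' \in \mathcal{L}(\Psi,\Phi)$, so for each $\psi \in \Psi$, $\inner{Y_{t}}{\psi} = \inner{A X_{t}}{\psi} = \inner{X_{t}}{A'\psi}$, which gives $Y(\psi) = X(A'\psi) \in S^{0}$. Hence $Y$ is a $\Psi'$-valued semimartingale, and the map $Y : \Psi \to S^{0}$ is continuous as the composition of the continuous maps $A' : \Psi \to \Phi$ and $X : \Phi \to S^{0}$. Thus the real-valued stochastic integral $\int H\,dY$ is well defined for every $H \in b\mathcal{P}(\Psi)$ by the construction recalled in Section \ref{sectRealValuedStochIntegration}.

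Next I would show that the operator $H \mapsto A'(H)$, defined pointwise by $A'(H)(t,\omega) \defeq A'(H(t,\omega))$, is a continuous linear map from $b\mathcal{P}(\Psi)$ into $b\mathcal{P}(\Phi)$. For membership, given $f \in \Phi'$ we have $\inner{f}{A'(H)} = \inner{A f}{H} \in b\mathcal{P}$ since $Af \in \Psi'$ and $H \in b\mathcal{P}(\Psi)$; continuity follows because for every continuous seminorm $p$ on $\Phi$ the composition $p \circ A'$ is a continuous seminorm on $\Psi$, and $\sup_{(t,\omega)} p(A'(H(t,\omega))) \le \sup_{(t,\omega)} (p \circ A')(H(t,\omega))$.

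Then I would verify \eqref{eqIntegralOfContiImageGoodIntegrator} on $\Psi$-valued elementary processes. If $H(t,\omega) = \sum_{k=1}^{m} h_{k}(t,\omega)\,\psi_{k}$, then $A'(H) = \sum_{k=1}^{m} h_{k}\, A'\psi_{k}$ is a $\Phi$-valued elementary process, and by property \eqref{eqActionWeakIntegSimpleIntegNuclear} applied on both sides,
\begin{equation*}
\int H\, dY = \sum_{k=1}^{m} h_{k} \cdot Y(\psi_{k}) = \sum_{k=1}^{m} h_{k} \cdot X(A'\psi_{k}) = \int A'(H)\, dX.
\end{equation*}
Now both $H \mapsto \int H\, dY$ and $H \mapsto \int A'(H)\, dX$ are continuous from $b\mathcal{P}(\Psi)$ into $(S^{0})_{lcx}$: the first by \ref{continuityStochIntegConvexif}, and the second as the composition of $H \mapsto A'(H)$ (continuous into $b\mathcal{P}(\Phi)$) with $\int \cdot \, dX$ (continuous into $(S^{0})_{lcx}$). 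By the density of elementary processes in $b\mathcal{P}(\Phi)$ (via Corollary 4.9 in \cite{FonsecaMora:StochInteg}, stated for $\Psi$ here) the identity \eqref{eqIntegralOfContiImageGoodIntegrator} extends to every $H \in b\mathcal{P}(\Psi)$.

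Finally, since $X$ is by hypothesis a $S^{0}$-good integrator, the map $G \mapsto \int G\, dX$ is continuous from $b\mathcal{P}(\Phi)$ into $S^{0}$ (with the full semimartingale topology). Composing with the continuous operator $H \mapsto A'(H)$ and using the identity \eqref{eqIntegralOfContiImageGoodIntegrator}, we conclude that $H \mapsto \int H\, dY$ is continuous from $b\mathcal{P}(\Psi)$ into $S^{0}$, proving that $Y$ is a $S^{0}$-good integrator in $\Psi'$. The main (very minor) subtlety is ensuring the density extension is carried out in $(S^{0})_{lcx}$ to match the definition of $\int H\, dY$; the upgrade to $S^{0}$-continuity is then automatic since $X$ is itself a good integrator.
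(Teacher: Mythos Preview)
Your proof is correct and follows essentially the same route as the paper: verify $Y(\psi)=X(A'\psi)$ so $Y:\Psi\to S^{0}$ is continuous, check \eqref{eqIntegralOfContiImageGoodIntegrator} on elementary processes, extend by density using continuity into $(S^{0})_{lcx}$, and then deduce $S^{0}$-continuity of $H\mapsto\int H\,dY$ as a composition with the good-integrator map for $X$. The only cosmetic difference is that you isolate the continuity of $H\mapsto A'(H)$ as a separate lemma-like step, whereas the paper verifies this inside a net argument at the end; the content is the same.
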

\begin{proof} It is clear that $Y$ is $\Psi'$-valued $(\mathcal{F}_{t})$-adapted semimartingale and the mapping $\psi \mapsto Y(\psi)$ is continuous from $\Psi$ into $S^{0}$. 

First we prove that \eqref{eqIntegralOfContiImageGoodIntegrator} holds true. Indeed, observe that if $H$ is of the elementary form 
$$H(t,\omega)=\sum_{k=1}^{m} h_{k}(t,\omega) \psi_{k},$$
where  $h_{k} \in b\mathcal{P}$ and $\psi_{k} \in \Psi$ for $k=1, \cdots, m$, then 
$$ \int \, H \, dY = \sum_{k=1}^{m} h_{k} \cdot \inner{Y}{\psi_{k}} = 
\sum_{k=1}^{m} h_{k} \cdot \inner{X}{A'\psi_{k}}= \int \, A'(H) \, dX. $$
Since the stochastic integral mapping $H \mapsto \int \, H \, dY$ (respectively $K \mapsto \int \, K \, dX$) is continuous from  $b\mathcal{P}(\Psi)$ into $(S^{0})_{lcx}$ (respectively from  $b\mathcal{P}(\Phi)$ into $S^{0}$),  and the elementary integrands are dense in $b\mathcal{P}(\Psi)$, we obtain \eqref{eqIntegralOfContiImageGoodIntegrator}. 

To conclude that $Y$ is a $S^{0}$-good integrator we must  show that the 
mapping $H \mapsto \int \, H \, dY$ is continuous from  $b\mathcal{P}(\Phi)$ into $S^{0}$. 

Assume $(H_{\gamma}: \gamma \in \Gamma)$ is a net converging to $H$ in $b\mathcal{P}(\Psi)$.  Since  $A' \in \mathcal{L}(\Psi,\Phi)$, we have $(A'(H_{\gamma}): \gamma \in \Gamma)$ converges to $A'(H)$ in $b\mathcal{P}(\Phi)$. In effect, if $p$ is a continuous seminorm on $\Phi$, by continuity of $A'$ there exist $C>0$ and a continuous seminorm $q$ on $\Psi$ such that $p(A'\psi) \leq C q(\psi)$ for every $\psi \in \Psi$. Then, we have 
$$ \sup_{(t,\omega)} p(A'(H_{\gamma}(t,\omega))-A'(H(t,\omega))) \leq C \sup_{(t,\omega)} q(H_{\gamma}(t,\omega)-H(t,\omega)) \rightarrow 0. $$
Hence, by \eqref{eqIntegralOfContiImageGoodIntegrator} and because $X$ is a $S^{0}$-good integrator we have 
$$ \int \, H_{\gamma} \, dY = \int \, A'(H_{\gamma}) \, dX \rightarrow \int \, A'(H) \, dX = \int \, H \, dY ,$$ 
which shows that $Y$ is a $S^{0}$-good integrator. 
\end{proof}

\section{Vector-Valued $S^{0}$-Stochastic Integration}\label{sectStrongStochInteg}

\subsection{Construction of the vector-valued $S^{0}$-stochastic integral}\label{sectConstruStochaInteg}

In this section, within our framework, we investigate a suitable vector-valued stochastic integral for operator-valued processes with respect to dual of a nuclear space-valued semimartingales. In particular, the stochastic integral will be constructed using a regularization argument  which employs the real-valued stochastic integral as a building block. For this argument to be employed, one needs the continuity of the  stochastic integral mapping into the space $S^{0}$ (with the semimartingale topology). Therefore, we will consider only $S^{0}$-good integrators.  

We start in this section by considering the following class of operator-valued processes:

\begin{definition}
Let $\Phi$ and $\Psi$ be locally convex spaces. Denote by $b\mathcal{P}(\Psi, \Phi)$ the space of mappings $R:\R_{+} \times \Omega \rightarrow \mathcal{L}(\Psi,\Phi)$ that are:
\begin{enumerate}
\item \emph{weakly predictable}, that is $\forall f \in \Phi'$, $\psi \in \Psi$, the mapping $(t,\omega) \mapsto \inner{f}{R(t,\omega) \psi}$ is predictable, 
\item  \emph{weakly bounded}, that is for every $B \subseteq \Psi$ bounded, 
$$\sup_{\psi \in B} \sup_{(t,\omega)} \abs{ \inner{f}{R(t,\omega) \psi}}< \infty, \quad \forall \, f  \in \Phi'.$$ \end{enumerate}
\end{definition}  

The space $b\mathcal{P}(\Psi, \Phi)$ is a linear space. To see this, let $R, S \in b\mathcal{P}(\Psi, \Phi)$ and $c \in \R$. For any $\psi \in \Psi$, $f \in \Phi'$, $t \geq 0$, $\omega \in \Omega$, we have
$$ \inner{f}{(cR(t,\omega)+S(t,\omega))\psi}= \inner{f}{cR(t,\omega) \psi}+ \inner{f}{S(t,\omega) \psi}. $$
From the above it is easy to conclude that $cR+S \in b\mathcal{P}(\Phi',\Psi')$. 

In this section we introduce a theory of  $\Psi'$-valued stochastic integration for integrands that belongs to $b\mathcal{P}(\Phi', \Psi')$ with respect to a $S^{0}$-good integrator in $\Phi'$, and under rather general conditions on $\Phi$ and $\Psi$. The first step in our construction will be to show that for reflexive spaces (we do not need to assume they are nuclear spaces) we can identify the spaces $ b\mathcal{P}(\Psi,\Phi)$ and $ b\mathcal{P}(\Phi',\Psi')$. 

\begin{proposition} \label{propDualIntegrandsNuclearSpace}
Suppose that $\Psi$ and $\Phi$ are reflexive locally convex spaces. Then, the mapping 
\begin{equation} \label{eqDefiIsomoNucleIntegrands}
\mathcal{L}(\Phi', \Psi') \ni R(t,\omega) \mapsto R(t,\omega)' \in \mathcal{L}(\Psi,\Phi), \quad \forall \, t \geq 0, \, \omega \in \Omega,
\end{equation}
is an isomorphism from $ b\mathcal{P}(\Phi',\Psi')$ into $ b\mathcal{P}(\Psi,\Phi)$. 
\end{proposition}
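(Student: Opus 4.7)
The plan is to show that the pointwise transposition $R \mapsto R^{T}$, with $R^{T}(t,\omega) \defeq R(t,\omega)'$ interpreted as an element of $\mathcal{L}(\Psi,\Phi)$ via the canonical identifications $\Psi \cong \Psi''$ and $\Phi \cong \Phi''$ provided by reflexivity, realizes the claimed isomorphism. Linearity of this assignment is clear, so the work lies in verifying well-definedness (preservation of weak predictability and weak boundedness) and bijectivity.

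Well-definedness rests on the pointwise identity
\[
\inner{f}{R(t,\omega)'\psi} = \inner{R(t,\omega)f}{\psi}, \qquad f \in \Phi', \; \psi \in \Psi,
\]
which is simply the definition of the transpose read through the reflexive identifications. Weak predictability of $R^{T}$ is then immediate, since the right-hand side is predictable in $(t,\omega)$ by the weak predictability of $R$ (choose $\psi \in \Psi \cong (\Psi')'$ as the functional against which to test $R$).

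The main obstacle is the weak boundedness of $R^{T}$. Rewritten via the identity above, this becomes the assertion that, for each fixed $f \in \Phi'$, the orbit $\{R(t,\omega)f : (t,\omega) \in \R_{+}\times\Omega\} \subseteq \Psi'$ is uniformly bounded on every strongly bounded subset of $\Psi$, that is, bounded in the strong dual $\Psi'_{\beta}$. The weak boundedness hypothesis on $R$, applied to the (trivially bounded) singleton $B = \{f\} \subseteq \Phi'$, gives only that this orbit is $\sigma(\Psi',\Psi)$-bounded. The crucial point is that reflexive locally convex spaces are barrelled, so by Banach--Steinhaus every $\sigma(\Psi',\Psi)$-bounded subset of $\Psi'$ is in fact strongly bounded; this upgrades the hypothesis to exactly the condition we need.

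Bijectivity is then straightforward. Injectivity follows from Hahn--Banach: if $R^{T} = 0$, then $\inner{R(t,\omega)f}{\psi} = 0$ for all $\psi \in \Psi$, and reflexivity of $\Psi$ forces $R(t,\omega)f = 0$ for every $f \in \Phi'$, so $R = 0$. For surjectivity, given $S \in b\mathcal{P}(\Psi, \Phi)$ set $\tilde{R}(t,\omega) \defeq S(t,\omega)' \in \mathcal{L}(\Phi',\Psi')$ (again via reflexivity); the symmetric version of the preceding argument, this time invoking the barrelledness of $\Phi$, shows that $\tilde{R} \in b\mathcal{P}(\Phi',\Psi')$, and reflexivity yields $\tilde{R}^{T} = S$, producing the claimed inverse.
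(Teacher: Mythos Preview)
Your proof is correct and follows essentially the same route as the paper: both arguments reduce to the duality identity $\inner{f}{R(t,\omega)'\psi} = \inner{R(t,\omega)f}{\psi}$ and then invoke barrelledness (via reflexivity) together with Banach--Steinhaus to upgrade pointwise boundedness to the uniform estimate required for weak boundedness. The paper isolates this last step as a separate lemma (Lemma~\ref{lemmStrongIntegBarrelled}), whereas you inline it by phrasing the upgrade as ``$\sigma(\Psi',\Psi)$-bounded $\Rightarrow$ strongly bounded in $\Psi'$''; one minor imprecision is that in the surjectivity step the symmetric argument actually uses barrelledness of $\Phi'$ rather than of $\Phi$, but reflexivity of $\Phi$ guarantees both, so the conclusion stands.
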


For our proof Proposition \ref{propDualIntegrandsNuclearSpace}  we will need the following result of more general nature: 

\begin{lemma}\label{lemmStrongIntegBarrelled}
If $\Psi$ is a barrelled space, $R:\R_{+} \times \Omega \rightarrow \mathcal{L}(\Psi,\Phi)$ is weakly predictable, and satisfies the condition: $\sup_{(t,\omega)} \abs{ \inner{f}{R(t,\omega) \psi}}< \infty$ $\forall f \in \Phi'$, $\psi \in \Psi$, then  $R \in  b\mathcal{P}(\Psi, \Phi)$.  
\end{lemma}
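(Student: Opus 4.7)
The plan is to derive weak boundedness from the pointwise hypothesis via the Banach--Steinhaus theorem for barrelled spaces.

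Fix an arbitrary $f \in \Phi'$. For each $(t,\omega) \in \R_{+} \times \Omega$, the map
$$ \Lambda_{t,\omega} : \Psi \ni \psi \mapsto \inner{f}{R(t,\omega)\psi} \in \R $$
is a continuous linear functional on $\Psi$, since it is the composition $f \circ R(t,\omega)$ of the continuous operator $R(t,\omega) \in \mathcal{L}(\Psi,\Phi)$ with $f \in \Phi'$. Thus the family $\mathcal{F}_{f} \defeq \{\Lambda_{t,\omega} : (t,\omega)\in \R_{+}\times \Omega\}$ is contained in the topological dual $\Psi'$.

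By the hypothesis of the lemma, for each fixed $\psi \in \Psi$ we have
$$\sup_{(t,\omega)} \abs{\Lambda_{t,\omega}(\psi)} = \sup_{(t,\omega)} \abs{\inner{f}{R(t,\omega)\psi}} < \infty,$$
so $\mathcal{F}_{f}$ is pointwise bounded on $\Psi$. Since $\Psi$ is barrelled, the uniform boundedness principle in its Banach--Steinhaus form (see e.g.\ \cite{Jarchow} or \cite{NariciBeckenstein}) guarantees that any pointwise bounded subset of $\Psi'$ is equicontinuous, and hence uniformly bounded on every bounded subset of $\Psi$. Therefore for every bounded $B \subseteq \Psi$,
$$\sup_{\psi \in B} \sup_{(t,\omega)} \abs{\inner{f}{R(t,\omega)\psi}} = \sup_{\psi \in B} \sup_{\Lambda \in \mathcal{F}_{f}} \abs{\Lambda(\psi)} < \infty.$$
As $f \in \Phi'$ was arbitrary, this establishes the weak boundedness condition. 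Combined with the assumed weak predictability, this shows $R \in b\mathcal{P}(\Psi,\Phi)$.

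There is no real obstacle here: the argument is essentially a direct invocation of the uniform boundedness principle, and the role of the hypothesis that $\Psi$ be barrelled is precisely to make the passage from pointwise bounds to uniform bounds on bounded sets automatic. The only point worth emphasizing in the write-up is that the continuity of each $\Lambda_{t,\omega}$ follows from $R(t,\omega)$ being an element of $\mathcal{L}(\Psi,\Phi)$ (not merely a linear map), which is what allows the Banach--Steinhaus theorem to apply.
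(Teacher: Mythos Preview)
Your proof is correct and follows essentially the same approach as the paper: fix $f \in \Phi'$, observe that the family $\{\psi \mapsto \inner{f}{R(t,\omega)\psi}\}_{(t,\omega)}$ consists of continuous linear functionals on $\Psi$ that are pointwise bounded, and invoke the Banach--Steinhaus theorem (valid because $\Psi$ is barrelled) to upgrade this to uniform boundedness on bounded sets. Your write-up is somewhat more explicit about the intermediate step through equicontinuity, but the argument is the same.
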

\begin{proof}
We only need to show that $R$ is weakly bounded. Fix some $f \in \Phi'$. Then, observe that for each $(t,\omega) \in \R_{+} \times \Omega$, the mapping $\psi \mapsto \inner{f}{R(t,\omega) \psi}$ is linear and continuous. Moreover, the condition in $R$ shows that this family is pointwise bounded. Therefore, the Banach-Steinhauss theorem shows that this family is uniformly bounded, that is, the mapping $R$ is weakly bounded.   
\end{proof} 

\begin{proof}[Proof of Proposition \ref{propDualIntegrandsNuclearSpace}]
Since $\Psi$ and $\Phi$ are reflexive spaces, it is clear that for all $t \geq 0$,  $\omega \in \Omega$, $R(t,\omega) \in \mathcal{L}(\Phi',\Psi')$  if and only if $R(t,\omega)' \in \mathcal{L}(\Psi, \Phi)$. Moreover, for all $f \in \Phi'$, $\psi \in \Psi$, $t \geq 0$, $\omega \in \Omega$, we have 
\begin{equation} \label{eqDualityNucleIntegrands}
\inner{f}{R(t,\omega)' \psi} = \inner{R(t,\omega)f}{\psi},
\end{equation}
and hence, by the reflexivity of $\Psi$ and $\Phi$ the weak predictability of $R$ implies that of $R'$ and conversely. In a similar way,  \eqref{eqDualityNucleIntegrands} implies that 
$$ \sup_{(t,\omega)} \abs{ \inner{f}{R(t,\omega)' \psi} } = \sup_{(t,\omega)} \abs{\inner{R(t,\omega)f}{\psi}}, \quad \forall \, f \in \Phi', \, \psi \in \Psi. $$ 
Then, because $\Psi$ and $\Phi$ are barrelled, it follows from Lemma \ref{lemmStrongIntegBarrelled} that $R$ is weakly bounded if and only if $R'$ is weakly bounded. Therefore, the mapping defined in \eqref{eqDefiIsomoNucleIntegrands}
is an isomorphism from $b\mathcal{P}(\Phi',\Psi')$ into $b\mathcal{P}(\Psi,\Phi)$. 
\end{proof}

As a second step in our construction of the stochastic integral we will  show that the spaces $b\mathcal{P}(\Psi, \Phi)$ and $\mathcal{L}(\Psi, \mathcal{L}_{b}(\Phi',b\mathcal{P}))$ are isomorphic (here $\mathcal{L}_{b}(\Phi',b\mathcal{P})$ denotes the space $\mathcal{L}(\Phi',b\mathcal{P})$ equipped with the topology of bounded convergence). In the next result we show that this identification holds if $\Psi$ is bornological (no need to be reflexive nor nuclear) and $\Phi$ is reflexive (no need to be nuclear).

\begin{proposition} \label{propIsomorStrongIntBounPredOpeLinOper}
Let $\Phi$ and $\Psi$ be locally convex spaces. Suppose that $\Psi$ is bornological, and that $\Phi$ is reflexive. Then, the map from $b\mathcal{P}(\Psi,\Phi)$ into $\mathcal{L}(\Psi, \mathcal{L}_{b}(\Phi', b\mathcal{P}))$ defined by 
\begin{equation} \label{defIsomorStrongIntBounPredOpeLinOper}
R \mapsto [ \psi \mapsto \left( f \mapsto \inner{f}{R \psi} \right) ].  
\end{equation}
is a linear isomorphism.  
\end{proposition}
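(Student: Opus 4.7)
The map in \eqref{defIsomorStrongIntBounPredOpeLinOper} is manifestly linear, so I would organize the argument into three blocks: (i) well-definedness (the image actually lands in $\mathcal{L}(\Psi,\mathcal{L}_{b}(\Phi',b\mathcal{P}))$), (ii) injectivity, and (iii) surjectivity. The two engines powering everything are Mackey's theorem (weakly bounded $\Leftrightarrow$ bounded in any locally convex space) and the fact that for bornological $\Psi$ a linear map out of $\Psi$ is continuous iff it maps bounded sets to bounded sets.

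For well-definedness, fix $R\in b\mathcal{P}(\Psi,\Phi)$ and $\psi\in\Psi$. The process $(t,\omega)\mapsto\inner{f}{R(t,\omega)\psi}$ is predictable by weak predictability and bounded by weak boundedness (applied to the bounded set $\{\psi\}$), so $T_{R}(\psi)(f) := \inner{f}{R(\cdot)\psi}\in b\mathcal{P}$. To see $T_{R}(\psi)\in\mathcal{L}(\Phi',b\mathcal{P})$, note that the $b\mathcal{P}$-norm equals $\eta_{B_{\psi}}(f)$ with $B_{\psi}=\{R(t,\omega)\psi:(t,\omega)\}$; weak boundedness of $R$ at the singleton $\{\psi\}$ makes $B_{\psi}$ weakly bounded in $\Phi$, hence bounded by Mackey, hence $\eta_{B_{\psi}}$ is continuous on $\Phi'_{\beta}$. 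Finally, to get continuity of $\psi\mapsto T_{R}(\psi)$ into $\mathcal{L}_{b}(\Phi',b\mathcal{P})$, it suffices, by bornologicity of $\Psi$, to check that bounded sets go to bounded sets: for $B\subseteq\Psi$ bounded and $C\subseteq\Phi'$ bounded, the full set $D:=\bigcup_{\psi\in B}\{R(t,\omega)\psi\}$ is weakly bounded in $\Phi$ (by weak boundedness of $R$), hence bounded by Mackey; pairing the bounded set $C$ against the bounded set $D$ gives the required uniform bound.

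Injectivity is immediate: if $T_{R}=0$, then $\inner{f}{R(t,\omega)\psi}=0$ pointwise for all $f,\psi$, and since $\Phi'$ separates points of $\Phi$, $R\equiv 0$. For surjectivity, given $\tau\in\mathcal{L}(\Psi,\mathcal{L}_{b}(\Phi',b\mathcal{P}))$, I define, for each $(t,\omega,\psi)$, a linear form $\phi_{t,\omega,\psi}:\Phi'\to\R$ by $\phi_{t,\omega,\psi}(f):=\tau(\psi)(f)(t,\omega)$. This is continuous on $\Phi'_{\beta}$ because $f\mapsto\tau(\psi)(f)$ is continuous into $b\mathcal{P}$ (uniform norm) and then evaluation at $(t,\omega)$ is continuous on $b\mathcal{P}$. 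Reflexivity of $\Phi$ is used here: $\Phi=(\Phi'_{\beta})'$ produces a unique $R(t,\omega)\psi\in\Phi$ realising $\phi_{t,\omega,\psi}$. Linearity of $R(t,\omega)$ in $\psi$ follows from linearity of $\tau$ plus the injection $\Phi\hookrightarrow\Phi''$.

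The one genuinely delicate point is showing $R(t,\omega)\in\mathcal{L}(\Psi,\Phi)$, i.e. upgrading weak continuity to continuity. The map $\psi\mapsto\inner{f}{R(t,\omega)\psi}=\tau(\psi)(f)(t,\omega)$ is continuous on $\Psi$ for every $f\in\Phi'$ (using that evaluation at a single $f$ is a continuous seminorm on $\mathcal{L}_{b}(\Phi',b\mathcal{P})$), so for any bounded $B\subseteq\Psi$, $R(t,\omega)(B)$ is weakly bounded in $\Phi$ and thus bounded by Mackey. Then for any continuous seminorm $p$ on $\Phi$, $p\circ R(t,\omega)$ is a seminorm on $\Psi$ bounded on bounded sets, hence continuous by the bornological hypothesis on $\Psi$; this gives continuity of $R(t,\omega)$. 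Weak predictability and weak boundedness of $R$ then transfer directly from the corresponding properties of $\tau(\psi)(f)\in b\mathcal{P}$ and from the continuity of $\psi\mapsto\tau(\psi)(f)$ into $b\mathcal{P}$. The hardest step, and the one that really needs both hypotheses (bornologicity of $\Psi$, reflexivity of $\Phi$) in tandem with Mackey's theorem, is exactly this passage from the weak continuity of $R(t,\omega)$ to its continuity; everything else is bookkeeping on the two defining conditions of $b\mathcal{P}(\Psi,\Phi)$.
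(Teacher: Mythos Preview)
Your proof is correct. The overall architecture (well-definedness, injectivity, surjectivity) matches the paper's, but the supporting machinery differs in two places worth noting.

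For well-definedness of the continuity of $\psi\mapsto T_{R}(\psi)$, the paper exploits that reflexivity of $\Phi$ makes $\Phi'$ barrelled, and then applies the Banach--Steinhaus theorem: the family $\{f\mapsto\inner{f}{R\psi}:\psi\in B\}$ is pointwise bounded on the barrelled space $\Phi'$, hence equicontinuous, hence bounded in $\mathcal{L}_{b}(\Phi',b\mathcal{P})$. You instead go through Mackey's theorem, showing directly that $D=\bigcup_{\psi\in B}\{R(t,\omega)\psi\}$ is (weakly, hence strongly) bounded in $\Phi$ and then pair with bounded $C\subseteq\Phi'$. Both are valid; your route is slightly more elementary in that it avoids invoking barrelledness of $\Phi'$ explicitly.

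For surjectivity, the paper calls on an external identification $b\mathcal{P}(\Phi)\simeq\mathcal{L}_{b}(\Phi',b\mathcal{P})$ (Proposition~4.7 of \cite{FonsecaMora:StochInteg}) to pass from $S\in\mathcal{L}(\Psi,\mathcal{L}_{b}(\Phi',b\mathcal{P}))$ to $\tilde{S}\in\mathcal{L}(\Psi,b\mathcal{P}(\Phi))$, after which $R(t,\omega)\psi:=\tilde{S}\psi(t,\omega)$ and continuity of $R(t,\omega)$ follows immediately since evaluation $b\mathcal{P}(\Phi)\to\Phi$ is continuous. You unpack this black box: you construct $R(t,\omega)\psi$ directly via reflexivity (realising the continuous form $f\mapsto\tau(\psi)(f)(t,\omega)$ on $\Phi'_{\beta}$ by an element of $\Phi$) and then argue continuity of $R(t,\omega)$ by the bornological criterion combined with Mackey once more. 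Your approach is self-contained and makes transparent exactly where reflexivity enters; the paper's is shorter but relies on the cited isomorphism. A minor stylistic remark: your detour through seminorms $p\circ R(t,\omega)$ in the continuity step is correct but unnecessary---once you know $R(t,\omega)$ sends bounded sets to bounded sets and $\Psi$ is bornological, continuity is immediate.
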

\begin{proof}
Let $R \in b\mathcal{P}(\Psi,\Phi)$. For any given $\psi \in \Psi$, it is clear that $R \psi$ is an element of $b\mathcal{P} (\Phi)$. But then, in view of Proposition 4.7 in \cite{FonsecaMora:StochInteg} it follows that $f \mapsto \inner{f}{R \psi}$ defines an element of $\mathcal{L}_{b}(\Phi', b\mathcal{P})$. It is also clear that the mapping $\psi \mapsto \left( f \mapsto \inner{f}{R \psi} \right)$ is linear. Therefore, in orden to show that the mapping given by \eqref{defIsomorStrongIntBounPredOpeLinOper} is well-defined, we must show that the mapping $\psi \mapsto \left( f \mapsto \inner{f}{R \psi} \right)$ is continuous from $\Psi$  into $\mathcal{L}_{b}(\Phi', b\mathcal{P})$. 

To do this, let $B \subseteq \Psi$ bounded. Because $\sup_{\psi \in B} \sup_{(t,\omega)} \abs{ \inner{f}{R(t,\omega) \psi}}< \infty$  $\forall f  \in \Phi'$ and $\Phi'$ is barrelled (it is reflexive), then by the Banach-Steinhauss theorem (see Theorem 11.9.1 in \cite{NariciBeckenstein}, p.400) the family $\{ f \mapsto \inner{f}{R \psi}: \psi \in B\} \subseteq \mathcal{L}(\Phi', b\mathcal{P})$ is equicontinuous; hence bounded in $\mathcal{L}_{b}(\Phi', b\mathcal{P})$ (see Theorem III.4.1 in \cite{Schaefer}, p.83). Therefore, the operator $\psi \mapsto \left( f \mapsto \inner{f}{R \psi} \right)$ is  bounded (maps bounded subsets of $\Psi$ into bounded subsets in $\mathcal{L}_{b}(\Phi', b\mathcal{P})$), but as $\Psi$ is bornological this implies that this operator is also continuous (see Theorem 13.2.7 in \cite{NariciBeckenstein}, p.444-5). 

Now, it is clear that the mapping defined in  \eqref{defIsomorStrongIntBounPredOpeLinOper} is linear and that has kernel $\{0\}$, so it is injective. It only remain to show that it is surjective. 

Let $S \in \mathcal{L}(\Psi, \mathcal{L}_{b}(\Phi', b\mathcal{P}))$. 
In view of Proposition 4.7 in \cite{FonsecaMora:StochInteg}, to $S$ there corresponds a map $\tilde{S} \in \mathcal{L}(\Psi, b\mathcal{P}(\Phi))$ such that 
\begin{equation} \label{eqEquiOperSPreImagIsoStrongInteg}
S \psi(f)(t,\omega)=\inner{f}{\tilde{S}\psi(t,\omega)}, \quad \forall \, \psi \in \Psi, \, f \in \Phi', \, t \geq 0, \, \omega \in \Omega. 
\end{equation}
Define a collection $R=\{ R(t,\omega): t \geq 0, \omega \in \Omega\}$ of mappings from $\Psi$ into $\Phi$ by means of the prescription
\begin{equation} \label{defiPreImagIsoStrongIntBounLin}
R(t,\omega) \psi \defeq \tilde{S}\psi(t, \omega), \quad \forall \psi \in \Psi, \, t \geq 0, \omega \in \Omega.  
\end{equation}
For any given $t \geq 0, \omega \in \Omega$, the fact that $\tilde{S} \in \mathcal{L}(\Psi, b\mathcal{P}(\Phi))$ implies that $R(t,\omega) \in \mathcal{L}(\Psi,\Phi)$. Moreover, as for each $\psi \in \Psi$, $\tilde{S} \psi \in  b\mathcal{P}(\Phi)$, then $R$ is weakly predictable. Furthermore, because $\tilde{S}$ is a continuous operator it is therefore a bounded operator. That is, given any $B \subseteq \Psi$ bounded, we have that $\{ \tilde{S} \psi: \psi \in B\}$ is bounded in $b\mathcal{P}(\Phi)$. Hence, for every $f \in \Phi'$ we have that 
$$ \sup_{\psi \in B} \sup_{(t,\omega)} \abs{ \inner{f}{R(t,\omega) \psi}}
= \sup_{\psi \in B} \sup_{(t,\omega)} \abs{\inner{f}{\tilde{S}\psi(t,\omega)}} < \infty. $$
Then, $R \in b\mathcal{P}(\Psi,\Phi)$. Finally, it follows from \eqref{eqEquiOperSPreImagIsoStrongInteg} and  \eqref{defiPreImagIsoStrongIntBounLin} that 
$$ S\psi(f)=\inner{f}{\tilde{S}\psi}=\inner{f}{R\psi}, \quad \forall f \in \Phi'.$$
Thus, $R$ is the preimage of $S$ under the mapping \eqref{defIsomorStrongIntBounPredOpeLinOper}. 
\end{proof}

From the previous results we obtain the following conclusion which will be of importance in our construction of the stochastic integral. 

\begin{corollary}\label{coroEquivStrongIntegrands} Let $\Psi$ be a quasi-complete bornological nuclear space and  $\Phi$ a complete barrelled nuclear space. Then the spaces 
 $\mathcal{L}(\Psi, b\mathcal{P}(\Phi))$,  $\mathcal{L}(\Psi, \mathcal{L}_{b}(\Phi', b\mathcal{P}))$, $b\mathcal{P}(\Psi,\Phi)$ and $b\mathcal{P}(\Phi',\Psi')$ are isomorphic as linear spaces.  
 In particular, the map $R \in b\mathcal{P}(\Psi,\Phi) \rightarrow \widetilde{R} 
 \in \mathcal{L}(\Psi, b\mathcal{P}(\Phi))$ is a linear isomorphism, $\widetilde{R}$ denoting the element of $\mathcal{L}(\Psi, b\mathcal{P}(\Phi))$ defined by $\widetilde{R}:  \psi \mapsto R' \psi =(R'(t,\omega) \psi: t \geq 0, \omega \in \Omega)  \in  b\mathcal{P}(\Phi)$.   
\end{corollary}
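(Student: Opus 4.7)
\emph{Plan.} The strategy is to assemble this corollary from linear isomorphisms already built earlier in the section, together with Proposition 4.7 of \cite{FonsecaMora:StochInteg}. Before invoking those propositions I would first verify that under the standing hypotheses both $\Psi$ and $\Phi$ are reflexive. For $\Phi$, completeness yields quasi-completeness, and nuclear plus quasi-complete is semi-Montel, hence semi-reflexive; together with the barrelled hypothesis this gives reflexivity. For $\Psi$, the same nuclear-plus-quasi-complete argument gives semi-reflexivity, while bornological-plus-quasi-complete yields ultrabornological and hence barrelled, so $\Psi$ is reflexive as well.

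Next, I would chain three isomorphisms. Proposition \ref{propDualIntegrandsNuclearSpace} (applicable since both spaces are reflexive) identifies $b\mathcal{P}(\Phi',\Psi')$ with $b\mathcal{P}(\Psi,\Phi)$ via the transposition map $R \mapsto R'$. Proposition \ref{propIsomorStrongIntBounPredOpeLinOper} (applicable since $\Psi$ is bornological and $\Phi$ is reflexive) identifies $b\mathcal{P}(\Psi,\Phi)$ with $\mathcal{L}(\Psi, \mathcal{L}_{b}(\Phi', b\mathcal{P}))$ by $R \mapsto \bigl[\psi \mapsto (f \mapsto \inner{f}{R\psi})\bigr]$. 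Finally, Proposition 4.7 of \cite{FonsecaMora:StochInteg} identifies $\mathcal{L}_{b}(\Phi', b\mathcal{P})$ with $b\mathcal{P}(\Phi)$ as linear spaces; post-composition with this identification then yields $\mathcal{L}(\Psi, \mathcal{L}_{b}(\Phi', b\mathcal{P})) \cong \mathcal{L}(\Psi, b\mathcal{P}(\Phi))$, completing the chain and proving the four listed spaces are pairwise isomorphic.

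To obtain the explicit formula for $\widetilde{R}$ in the ``In particular'' clause, I would simply chase an element $R \in b\mathcal{P}(\Psi,\Phi)$ through the compositions: it is first sent to $\psi \mapsto (f \mapsto \inner{f}{R\psi})$ by Proposition \ref{propIsomorStrongIntBounPredOpeLinOper}, and then the inverse of the identification $\mathcal{L}_{b}(\Phi', b\mathcal{P}) \cong b\mathcal{P}(\Phi)$ recovers, for each $\psi \in \Psi$, the $\Phi$-valued process $(R(t,\omega)\psi : t \geq 0,\, \omega \in \Omega) \in b\mathcal{P}(\Phi)$, yielding the claimed map $\widetilde{R}$ (the prime appearing in the statement is understood through the transposition of Proposition \ref{propDualIntegrandsNuclearSpace}, which identifies $R \in b\mathcal{P}(\Psi,\Phi)$ with an element of $b\mathcal{P}(\Phi',\Psi')$ whose transpose evaluated at $\psi$ is $R\psi$).

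There is no substantive obstacle here; the proof is essentially bookkeeping. The only care point is to verify the reflexivity of both nuclear spaces so that Proposition \ref{propDualIntegrandsNuclearSpace} applies, and to keep the direction of each transposition/evaluation straight so that the composite isomorphism matches the explicit formula claimed for $\widetilde{R}$.
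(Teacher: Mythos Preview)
Your proposal is correct and follows essentially the same route as the paper: verify reflexivity of both $\Psi$ and $\Phi$ (via quasi-complete nuclear $\Rightarrow$ semireflexive, plus barrelledness), then chain Proposition~\ref{propDualIntegrandsNuclearSpace}, Proposition~\ref{propIsomorStrongIntBounPredOpeLinOper}, and the identification $b\mathcal{P}(\Phi)\cong \mathcal{L}_{b}(\Phi',b\mathcal{P})$ from \cite{FonsecaMora:StochInteg}. The only minor discrepancy is that the paper cites Theorem~4.8 (not Proposition~4.7) of \cite{FonsecaMora:StochInteg} for that last isomorphism; otherwise your argument matches the paper's.
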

\begin{proof}
First, recall that a quasi-complete and bornological locally convex space is ultrabornological (see Theorem 13.2.12 in \cite{NariciBeckenstein}, p.449), hence is barrelled as it is the inductive limit of Banach spaces (see Theorem 11.12.2 in \cite{NariciBeckenstein}, p.409). Since, any quasi-complete nuclear space is semireflexive, then $\Psi$ and $\Phi$ are reflexive (see Theorem IV.5.6 in \cite{Schaefer}, p.145). The conclusion now follows combining the results from Propositions  \ref{propDualIntegrandsNuclearSpace} and \ref{propIsomorStrongIntBounPredOpeLinOper}, 
together with Theorem 4.8 in \cite{FonsecaMora:StochInteg} which shows that the mapping $H \in  b\mathcal{P}(\Phi) \rightarrow [f \rightarrow \inner{f}{H}] \in \mathcal{L}_{b}(\Phi', b\mathcal{P})$ is a linear isomorphism.
\end{proof}

We are ready to construct the vector-valued $S^{0}$-stochastic integral with respect to a $S^{0}$-good integrator $X$ in $\Phi'$. A key step will be the following remark: observe that if $R \in b\mathcal{P}(\Phi',\Psi')$, then for each $\psi \in \Psi$ the real-valued stochastic integral $\int \, R' \psi \, dX$ exists since by Corollary \ref{coroEquivStrongIntegrands} we have that $R' \psi \in b\mathcal{P}(\Phi)$.

\begin{theorem}\label{theoConstStrongStochInteg}
Let $\Psi$ be a quasi-complete bornological nuclear space and  $\Phi$ a complete barrelled nuclear spaces. Let  $X=(X_{t}: t \geq 0)$ be a $S^{0}$-good integrator in $\Phi'$. Then for each $R \in b\mathcal{P}(\Phi',\Psi')$ there exists a unique (up to indistinguishable versions) $\Psi'$-valued regular c\`{a}dl\`{a}g $(\mathcal{F}_{t})$-semimartingale $\int \, R \, dX= \left( \int_{0}^{t} R(r) \, dX_{r}: t \geq 0 \right)$, such that for each $\psi \in \Psi$, $\Prob$-a.e. 
\begin{equation}\label{eqConstrStrongStocInteg}
\inner{\int_{0}^{t} R(r) dX_{r}}{\psi}= \int_{0}^{t} \, R'(r) \psi \, dX_{r}, \quad \forall \, t \geq 0. 
\end{equation}
Moreover, if $X(\phi)$ has continuous paths for every $\phi \in \Phi$, then $\int \, R \, dX$ has continuous paths as a  $\Psi'$-valued process. 
\end{theorem}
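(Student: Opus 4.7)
The plan is to combine the identification from Corollary \ref{coroEquivStrongIntegrands} with the hypothesis that $X$ is a $S^{0}$-good integrator to produce a continuous cylindrical semimartingale in $\Psi'$, and then invoke the regularization theorem from \cite{FonsecaMora:Semi} to pass from the cylindrical object to a $\Psi'$-valued regular c\`adl\`ag semimartingale.

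\textbf{Step 1 (from $R$ to a continuous map $\Psi\to b\mathcal{P}(\Phi)$).} Apply Corollary \ref{coroEquivStrongIntegrands} to $R\in b\mathcal{P}(\Phi',\Psi')$. This yields $\widetilde{R}\in \mathcal{L}(\Psi,b\mathcal{P}(\Phi))$ with $\widetilde{R}(\psi)=R'\psi$. In particular, for every $\psi\in\Psi$ the $\Phi$-valued bounded predictable process $R'\psi$ lies in the domain of the real-valued stochastic integral with respect to $X$, so $\int R'\psi\,dX\in S^{0}$ is well defined.

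\textbf{Step 2 (a continuous cylindrical semimartingale).} Because $X$ is a $S^{0}$-good integrator, the stochastic integral mapping $I: b\mathcal{P}(\Phi)\to S^{0}$, $H\mapsto \int H\,dX$, is linear and continuous. The composition $Y\defeq I\circ\widetilde{R}:\Psi\to S^{0}$, $Y(\psi)=\int R'\psi\,dX$, is therefore linear and continuous. In particular $Y$ is a cylindrical semimartingale in $\Psi'$ for which the induced map $\Psi\to S^{0}$ is continuous.

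\textbf{Step 3 (regularization).} Because $\Psi$ is a quasi-complete bornological nuclear space, Theorem 3.7 together with Proposition 3.14 of \cite{FonsecaMora:Semi} applies to $Y$ and yields a $\Psi'$-valued regular c\`adl\`ag $(\mathcal{F}_{t})$-semimartingale, which we denote $\int R\,dX$, satisfying
\begin{equation*}
\inner{\textstyle\int_{0}^{t} R(r)\,dX_{r}}{\psi}=\int_{0}^{t} R'(r)\psi\,dX_{r}\quad \Prob\text{-a.e.},\ \forall\, t\geq 0,
\end{equation*}
for every $\psi\in\Psi$, which is exactly \eqref{eqConstrStrongStocInteg}. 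Uniqueness (up to indistinguishability) then follows from Remark \ref{remarkEqualInSemimartingale}: any two $\Psi'$-valued regular c\`adl\`ag semimartingales satisfying \eqref{eqConstrStrongStocInteg} for every $\psi\in\Psi$ agree weakly and hence are indistinguishable.

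\textbf{Step 4 (continuous paths).} For the final assertion, assume $X(\phi)\in S^{c}$ for every $\phi\in\Phi$. By property \ref{eqWeakIntegContPart} of the real-valued integral (together with the fact that $R'\psi\in b\mathcal{P}(\Phi)$ is integrated against a continuous-path semimartingale), each $Y(\psi)=\int R'\psi\,dX$ is continuous, i.e.\ $Y$ takes values in the closed subspace $S^{c}\subseteq S^{0}$. The continuous-path version of the regularization theorem in \cite{FonsecaMora:Semi} then produces a version of $\int R\,dX$ with continuous $\Psi'$-valued sample paths, which by the uniqueness argument above must coincide with the process constructed in Step 3.

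The main obstacle is Step 3: one must check that the hypotheses of the regularization theorem of \cite{FonsecaMora:Semi} are met by $Y$ under the assumption that $\Psi$ is only quasi-complete bornological nuclear (not necessarily ultrabornological nor Fr\'echet). Once this is verified, the algebraic identity \eqref{eqConstrStrongStocInteg}, the uniqueness, and the transfer of continuity of paths are all direct consequences of the corresponding properties of the real-valued integral together with Remark \ref{remarkEqualInSemimartingale}.
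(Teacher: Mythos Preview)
Your proposal is correct and follows essentially the same route as the paper: identify $R$ with $\widetilde{R}\in\mathcal{L}(\Psi,b\mathcal{P}(\Phi))$ via Corollary \ref{coroEquivStrongIntegrands}, compose with the continuous stochastic integral map $I$ to obtain a continuous cylindrical semimartingale $\Psi\to S^{0}$, and then regularize via Theorem 3.7 and Proposition 3.14 of \cite{FonsecaMora:Semi}.

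Two minor remarks. First, the ``main obstacle'' you flag in Step 3 is not one: as already observed in the proof of Corollary \ref{coroEquivStrongIntegrands}, a quasi-complete bornological space is automatically ultrabornological (Theorem 13.2.12 in \cite{NariciBeckenstein}), so the hypotheses of the regularization theorem are met. Second, in Step 4 your appeal to \ref{eqWeakIntegContPart} is not quite the right citation: that property concerns the continuous \emph{local martingale part} of the integral, not path continuity of the integral when the integrator is continuous. The paper instead invokes Proposition 3.6(1) and Theorem 4.10 of \cite{FonsecaMora:StochInteg} to conclude $Y(\psi)\in S^{c}$, and then the continuous-path case of the regularization theorem.
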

\begin{proof}
Given $R \in b\mathcal{P}(\Phi',\Psi')$, we have by Corollary \ref{coroEquivStrongIntegrands} that $R$ can be identified with an unique element $\widetilde{R}$ in $\mathcal{L}(\Psi, b\mathcal{P}(\Phi))$ by means of the prescription  $\psi \mapsto \widetilde{R}(\psi) \defeq R' \psi$. 

Now, as $X$ is a $S^{0}$-good integrator, the  stochastic integral mapping $I: b\mathcal{P}(\Phi) \rightarrow S^{0}$, $I(H)= \int \, H \, dX$, is linear continuous. Hence, the mapping $Z=I \circ \widetilde{R}$ defines a cylindrical semimartingale in $\Psi'$ which is linear continuous from $\Psi$ into $S^{0}$. By (\cite{FonsecaMora:Semi}, Theorem 3.7, Proposition 3.14) there exists a unique (up to indistinguishable versions) $\Psi'$-valued regular c\`{a}dl\`{a}g $(\mathcal{F}_{t})$-semimartingale $\int \, R \, dX= \left( \int_{0}^{t} R(r) \, dX_{r}: t \geq 0 \right)$ satisfying that for each $\psi \in \Psi$, $\Prob$-a.e.  
$$ \inner{\int_{0}^{t} R(r) dX_{r}}{\psi}
= Z_{t}(\psi) = (I \circ \widetilde{R}(\psi))_{t} = \int_{0}^{t} \, R'(r) \psi \, dX_{r}, 
$$
$\forall t \geq 0$. This shows \eqref{eqConstrStrongStocInteg}. 

To prove the final assertion assume that $X(\phi)\in S^{c}$ for every $\phi \in \Phi$. Then by Proposition 3.6(1) and Theorem 4.10 in \cite{FonsecaMora:StochInteg} we have that $Z(\psi) \in S^{c}$ for every $\psi \in \Psi$. In such a case by (\cite{FonsecaMora:Semi}, Theorem 3.7, Proposition 3.14) we have that $\int \, R \, dX$ can be chosen such that it satisfies the properties described above and furthermore has continuous paths in $\Psi'$.
\end{proof}

\begin{definition}
We refer to the elements in $b\mathcal{P}(\Phi',\Psi')$ as the  \emph{(operator-valued) stochastic integrands} and for each $R \in b\mathcal{P}(\Phi',\Psi')$ the $\Psi'$-valued process $\int \, R \, dX$ introduced in Theorem \ref{theoConstStrongStochInteg} is called the \emph{vector-valued $S^{0}$-stochastic integral} of $R$. The mapping $R \mapsto \int \, R \, dX$ from $b\mathcal{P}(\Phi',\Psi')$ into $S^{0}(\Psi')$ is referred as the  \emph{(vector-valued) stochastic integral mapping} determined by $X$. 
\end{definition}

\subsection{Properties of the $S^{0}$-stochastic integral}\label{subSectProperStochInteg}

In this section we study some properties of the  $S^{0}$-stochastic integral. Most of the properties are inherited via \eqref{eqConstrStrongStocInteg} from corresponding properties of the real-valued stochastic integral.

\begin{proposition} \label{propImageStrongIntegLineContMapNuclear}
Suppose that $\Upsilon$ and $\Psi$ are quasi-complete bornological nuclear spaces and that $\Phi$ is a complete barrelled nuclear space. Let  $X=(X_{t}: t \geq 0)$ be a $S^{0}$-good integrator in $\Phi'$. Then for each $R \in b\mathcal{P}(\Phi',\Psi')$ and $A \in \mathcal{L}(\Psi', \Upsilon')$ we have $ A \circ R \in b\mathcal{P}(\Phi', \Upsilon')$ and  
\begin{equation}\label{eqImageStroInteLineContMapNuclear}
\int \, A \circ R  \, dX = A \left( \int \, R \, dX \right).
\end{equation}
\end{proposition}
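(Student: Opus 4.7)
The plan is to reduce \eqref{eqImageStroInteLineContMapNuclear} to the defining property \eqref{eqConstrStrongStocInteg} of the vector-valued $S^{0}$-integral. First I would verify that $A \circ R$ is an admissible integrand in $b\mathcal{P}(\Phi',\Upsilon')$, then check that both sides of \eqref{eqImageStroInteLineContMapNuclear} yield the same scalar process when tested against every $\upsilon \in \Upsilon$, and finally invoke Remark \ref{remarkEqualInSemimartingale} to upgrade that pointwise agreement to indistinguishability in $S^{0}(\Upsilon')$.

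The algebraic engine is the adjoint identity $(A \circ R(t,\omega))' = R(t,\omega)' \circ A'$, which is available because the standing hypotheses force $\Psi$ and $\Upsilon$ to be reflexive (as recalled in the proof of Corollary \ref{coroEquivStrongIntegrands}), so $A' \in \mathcal{L}(\Upsilon,\Psi)$ and $R(t,\omega)' \in \mathcal{L}(\Psi,\Phi)$. Using this, for any $\upsilon \in \Upsilon$ and $\phi \in \Phi'$ I would write
$$\inner{(A \circ R)(t,\omega)\phi}{\upsilon} = \inner{R(t,\omega)\phi}{A'\upsilon}.$$
The right-hand side is predictable in $(t,\omega)$ because $A'\upsilon \in \Psi$ and $R$ is weakly predictable; the same identity together with the weak boundedness of $R$ applied at the vector $A'\upsilon$ delivers the weak boundedness of $A \circ R$. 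Hence $A \circ R \in b\mathcal{P}(\Phi',\Upsilon')$ and Theorem \ref{theoConstStrongStochInteg} supplies $\int A \circ R\, dX \in S^{0}(\Upsilon')$.

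Next I would appeal to the characterization \eqref{eqConstrStrongStocInteg} twice. For $\int A \circ R\, dX$ tested against $\upsilon$ it gives
$$\inner{\int_0^t (A\circ R)(r)\, dX_r}{\upsilon} = \int_0^t (A\circ R)'(r)\upsilon\, dX_r = \int_0^t R'(r)(A'\upsilon)\, dX_r,$$
while applying \eqref{eqConstrStrongStocInteg} to $R$ at the test vector $A'\upsilon \in \Psi$ identifies the last integral with $\inner{\int_0^t R(r)\, dX_r}{A'\upsilon} = \inner{A\bigl(\int_0^t R(r)\, dX_r\bigr)}{\upsilon}$, $\Prob$-a.e.\ for every $t \geq 0$.

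The only step requiring care, rather than being a real obstacle, is to confirm that $A \circ \int R\, dX$ is itself an element of $S^{0}(\Upsilon')$ so that Remark \ref{remarkEqualInSemimartingale} can be invoked: c\`{a}dl\`{a}g paths in $\Upsilon'$ follow from the continuity of $A\colon \Psi' \to \Upsilon'$, and regularity follows because the continuous image of a Radon probability measure on $\Psi'$ is a Radon probability measure on $\Upsilon'$. Once that is in hand, the scalar agreement above together with Remark \ref{remarkEqualInSemimartingale} forces the two $\Upsilon'$-valued regular c\`{a}dl\`{a}g semimartingales to be indistinguishable, giving \eqref{eqImageStroInteLineContMapNuclear}. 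The argument is essentially the transfer principle guaranteed by \eqref{eqConstrStrongStocInteg}, and no further analytic input is required.
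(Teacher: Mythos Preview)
Your proposal is correct and follows essentially the same route as the paper: verify $A\circ R \in b\mathcal{P}(\Phi',\Upsilon')$ via the adjoint identity, apply \eqref{eqConstrStrongStocInteg} twice to match the scalar projections, and invoke Remark~\ref{remarkEqualInSemimartingale}. Your explicit check that $A\bigl(\int R\,dX\bigr)$ lies in $S^{0}(\Upsilon')$ is a point the paper passes over silently, so in that respect your write-up is slightly more careful.
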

\begin{proof}
 First, since for any $\upsilon \in \Upsilon$, $f \in \Phi'$, $t \geq 0$, $\omega \in \Omega$, we have $ \inner{f}{(A \circ R(t,\omega))'\upsilon}= \inner{f}{R(t,\omega)' A' \upsilon}$, then one can easily check that  $ A \circ R \in b\mathcal{P}(\Phi', \Upsilon')$, hence $\int \, A \circ R \, dX$ exists as an element in $S^{0}(\Upsilon')$ by Theorem \ref{theoConstStrongStochInteg}. 
 
Moreover,  $\forall \upsilon \in \Upsilon$ we have by  \eqref{eqConstrStrongStocInteg} that (as elements in $S^{0}$):
$$ \inner{ \int \, A \circ R \, dX}{\upsilon} = \int \, (R' (A' \upsilon) \, dX = \inner{ \int \, R \, dX}{A' \upsilon} = \inner{A \left( \int \, R \, dX \right)}{\upsilon}. $$
Then  $\int \, A \circ R \, dX$ and $A \left( \int \, R \, dX \right)$ indistinguishable by Remark \ref{remarkEqualInSemimartingale}. 
\end{proof}

The next two results show that the integral is linear both on the integrands as well on the integrators.  

\begin{proposition}\label{propLinerStrongIntegralIntegrands}
Let $\Psi$, $\Phi$ and $X$ as in Theorem \ref{theoConstStrongStochInteg}. Let $R, S \in b\mathcal{P}(\Phi',\Psi')$ and $c \in \R$. Then  the $\Psi'$-valued processes $ \int \, cR+S dX$ and $c\int \, R d X+\int \, S d X$ are indistinguishable. 
\end{proposition}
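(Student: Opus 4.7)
The plan is to reduce the vector-valued equality to a family of real-valued equalities via the defining property \eqref{eqConstrStrongStocInteg}, and then invoke Remark \ref{remarkEqualInSemimartingale} to conclude that the two $\Psi'$-valued semimartingales are indistinguishable.

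First, I would check that the object $\int (cR+S)\,dX$ makes sense. Since $b\mathcal{P}(\Phi',\Psi')$ is a linear space (as verified just after its definition), $cR+S \in b\mathcal{P}(\Phi',\Psi')$, so Theorem \ref{theoConstStrongStochInteg} produces the $\Psi'$-valued regular c\`{a}dl\`{a}g semimartingale $\int (cR+S)\,dX$. Similarly, $c\int R\,dX + \int S\,dX$ is a well-defined element of $S^{0}(\Psi')$, since $S^{0}(\Psi')$ is a linear space.

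Next, fix $\psi \in \Psi$. Observing that dualization is linear, i.e.\ $(cR(t,\omega)+S(t,\omega))' = cR'(t,\omega)+S'(t,\omega)$ for every $(t,\omega)$, the $\Phi$-valued process $(cR+S)'\psi$ coincides pointwise with $cR'\psi + S'\psi$. Applying \eqref{eqConstrStrongStocInteg} to $cR+S$ and then using the bilinearity \ref{properBilinearity} of the real-valued stochastic integral mapping (from Section \ref{sectRealValuedStochIntegration}), we get $\Prob$-a.e.\ for every $t\geq 0$,
\begin{equation*}
\inner{\int_{0}^{t} (cR+S)(r)\,dX_{r}}{\psi} \;=\; \int_{0}^{t} (cR'+S')(r)\psi\,dX_{r} \;=\; c\int_{0}^{t} R'(r)\psi\,dX_{r}+\int_{0}^{t} S'(r)\psi\,dX_{r}.
\end{equation*}
On the other hand, by \eqref{eqConstrStrongStocInteg} applied separately to $R$ and $S$,
\begin{equation*}
\inner{c\int_{0}^{t} R(r)\,dX_{r}+\int_{0}^{t} S(r)\,dX_{r}}{\psi} \;=\; c\int_{0}^{t} R'(r)\psi\,dX_{r}+\int_{0}^{t} S'(r)\psi\,dX_{r},
\end{equation*}
also $\Prob$-a.e.\ for every $t\geq 0$. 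Hence for each $\psi\in\Psi$, the real-valued processes $\inner{\int (cR+S)\,dX}{\psi}$ and $\inner{c\int R\,dX+\int S\,dX}{\psi}$ are versions of each other.

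Finally, since both $\int (cR+S)\,dX$ and $c\int R\,dX+\int S\,dX$ belong to $S^{0}(\Psi')$, Remark \ref{remarkEqualInSemimartingale} applies and gives that the two processes are indistinguishable. The only mild subtlety is ensuring that the pointwise identity $(cR+S)' = cR'+S'$ transfers under the isomorphism of Corollary \ref{coroEquivStrongIntegrands} so that one genuinely integrates the right $\Phi$-valued process; this is routine once one recalls that the isomorphism $R\mapsto R'\psi$ is itself linear in $R$.
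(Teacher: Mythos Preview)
Your proof is correct and follows essentially the same approach as the paper's own proof: both reduce to the real-valued identity via \eqref{eqConstrStrongStocInteg}, use the linearity \ref{properBilinearity} of the real-valued integral, and conclude by Remark \ref{remarkEqualInSemimartingale}. Your additional remarks about the linearity of $R\mapsto R'\psi$ are accurate but could be omitted without loss.
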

\begin{proof} First, since  $cR+S \in b\mathcal{P}(\Phi',\Psi')$ the stochastic integrals $ \int \, cR+S dX$, $\int \, R d X$, and $\int \, S d X$ all exists as elements in $S^{0}(\Psi')$. 

Now from the linearity of the real-valued stochastic integral mapping and from \eqref{eqConstrStrongStocInteg}, for each $\psi \in \Psi$ we have (as elements in $S^{0}$):
\begin{eqnarray*}
 \inner{\int \, (cR+S) dX}{\psi} 
& = & \int \, (cR'+S') \psi \, dX \\
& = & c\int \, R' \psi d X+\int \, S' \psi \, d X \\ 
& = & \inner{c\int \, R \, d X+\int \, S \, d X}{\psi}.
\end{eqnarray*}
Then $ \int \, (cR+S) dX$ and $c\int \, R d X+\int \, S d X$ are  indistinguishable by Remark \ref{remarkEqualInSemimartingale}.
\end{proof}
 
\begin{proposition} \label{propLinerStrongIntegralIntegrators}  Let $\Psi$ and $\Phi$ as in Theorem \ref{theoConstStrongStochInteg}. Suppose that $X$ and $Y$ are two $S^{0}$-good integrators in $\Phi'$. Then for each $R \in b\mathcal{P}(\Phi',\Psi')$ the $\Psi'$-valued processes $\int \, R \, d (X+Y)$ and $\int \, R \, d X +\int \, R \, d Y$ are indistinguishable. 
\end{proposition}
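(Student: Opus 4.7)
The plan is to reduce the statement to the bilinearity of the real-valued stochastic integral via the defining relation \eqref{eqConstrStrongStocInteg}, exactly in the style used in the proofs of Propositions \ref{propImageStrongIntegLineContMapNuclear} and \ref{propLinerStrongIntegralIntegrands}.

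First I would verify that $X+Y$ is itself a $S^{0}$-good integrator in $\Phi'$, so that $\int R\,d(X+Y)$ is defined via Theorem \ref{theoConstStrongStochInteg}. This is immediate because the class $\mathbbm{S}^{0}(\Phi')$ is a linear subspace of $S^{0}(\Phi')$, as already noted in the paper (consequence of \ref{properBilinearity}): continuity of $\phi\mapsto (X+Y)(\phi)$ from $\Phi$ into $S^{0}$ follows from the continuity of $X$ and $Y$, and continuity of the stochastic integral mapping $H\mapsto \int H\,d(X+Y)$ from $b\mathcal{P}(\Phi)$ into $S^{0}$ follows from the bilinearity of the real-valued stochastic integral in \ref{properBilinearity}, since $\int H\,d(X+Y)=\int H\,dX+\int H\,dY$ as elements of $S^{0}$.

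Next, fix $\psi\in\Psi$. Since $R\in b\mathcal{P}(\Phi',\Psi')$, Corollary \ref{coroEquivStrongIntegrands} gives $R'\psi\in b\mathcal{P}(\Phi)$, so each of the three real-valued stochastic integrals $\int R'\psi\,dX$, $\int R'\psi\,dY$, $\int R'\psi\,d(X+Y)$ exists. Applying \eqref{eqConstrStrongStocInteg} to $X+Y$, and then the bilinearity of the real-valued stochastic integral in the integrator (property \ref{properBilinearity}), and finally \eqref{eqConstrStrongStocInteg} for $X$ and $Y$ separately, we get in $S^{0}$
\begin{align*}
\inner{\int R\,d(X+Y)}{\psi}
&= \int R'\psi\,d(X+Y) \\
&= \int R'\psi\,dX + \int R'\psi\,dY \\
&= \inner{\int R\,dX}{\psi} + \inner{\int R\,dY}{\psi} \\
&= \inner{\int R\,dX+\int R\,dY}{\psi}.
\end{align*}

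Finally, since $\psi\in\Psi$ was arbitrary and both $\int R\,d(X+Y)$ and $\int R\,dX+\int R\,dY$ belong to $S^{0}(\Psi')$, Remark \ref{remarkEqualInSemimartingale} yields that they are indistinguishable. There is no real obstacle here: the only subtlety worth flagging is the preliminary check that $X+Y$ remains a $S^{0}$-good integrator, which is what makes the left-hand side well defined via Theorem \ref{theoConstStrongStochInteg}.
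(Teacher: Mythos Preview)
Your proof is correct and follows exactly the same approach as the paper's own proof: first observing that $X+Y\in\mathbbm{S}^{0}(\Phi')$ so that $\int R\,d(X+Y)$ is well defined, and then reducing the identity to the bilinearity of the real-valued stochastic integral via \eqref{eqConstrStrongStocInteg} and concluding with Remark \ref{remarkEqualInSemimartingale}. The paper actually abbreviates this second step by simply referring to the proof of Proposition \ref{propLinerStrongIntegralIntegrands}, whereas you spell it out in full.
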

\begin{proof}
Let  $R \in b\mathcal{P}(\Phi',\Psi')$. Since $X+Y \in \mathbbm{S}^{0}(\Psi')$  the stochastic integral $\int \, R \, d (X+Y)$ exists as an element in $S^{0}(\Psi')$  by Theorem \ref{theoConstStrongStochInteg}. Finally, that $\int \, R \, d (X+Y)$ and $\int \, R \, d X +\int \, R \, d Y$ are indistinguishable follows similarly as in the proof of  Proposition \ref{propLinerStrongIntegralIntegrands} by using the linearity of the real-valued stochastic integral mapping on the integrators and \eqref{eqConstrStrongStocInteg}. 
\end{proof}

The next result shows that the $S^{0}$-integral behaves well when it comes to take the continuous part and when the process is stopped. 

\begin{proposition}\label{propStrongIntegContPartStopping} Let $\Psi$, $\Phi$ and $X$ as in Theorem \ref{theoConstStrongStochInteg}, and let $\tau$ a stopping time. Then,  for every $R \in b\mathcal{P}(\Phi',\Psi')$  we have
\begin{enumerate}
\item $\displaystyle{\left(\int R \, dX \right)^{c}= \int R \, dX^{c}}$. 
\item \label{propStochIntegStoppingTimes} $\displaystyle{\left(\int R \, dX \right)^{\tau}=  \int R \mathbbm{1}_{[0,\tau]}  \, dX  = \int R \, dX^{\tau}}$. 
\end{enumerate} 
\end{proposition}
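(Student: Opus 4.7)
The plan is to prove both identities by testing against arbitrary $\psi \in \Psi$, reducing each identity to the corresponding real-valued identity from \ref{eqWeakIntegContPart} and \ref{eqWeakIntegStopping} in Section~\ref{sectRealValuedStochIntegration}, and then invoking Remark~\ref{remarkEqualInSemimartingale} to upgrade the pointwise (in $\psi$) equality to indistinguishability of the two $\Psi'$-valued regular c\`adl\`ag semimartingales.

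First, I would check that every integral appearing in the statement is well-defined as an element of $S^{0}(\Psi')$. By Proposition~\ref{propGoodIntegraContPartStopping}, both $X^{c}$ and $X^{\tau}$ are $S^{0}$-good integrators in $\Phi'$, so Theorem~\ref{theoConstStrongStochInteg} produces $\int R \, dX^{c}$ and $\int R \, dX^{\tau}$. Also, since $\mathbbm{1}_{[0,\tau]}$ is a bounded predictable real-valued process, $R \mathbbm{1}_{[0,\tau]}$ still lies in $b\mathcal{P}(\Phi',\Psi')$ (weak predictability and weak boundedness are preserved by multiplication with a bounded predictable scalar), so $\int R \mathbbm{1}_{[0,\tau]} \, dX$ exists as well.

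For part (1), fix $\psi \in \Psi$. By \eqref{eqConstrStrongStocInteg}, $\inner{\int R \, dX}{\psi} = \int R'\psi \, dX$ up to indistinguishability. The definition of the continuous local martingale part of a $\Psi'$-valued semimartingale at the end of Section~\ref{subSectionCylAndStocProcess} gives $\inner{(\int R \, dX)^{c}}{\psi} = (\int R'\psi \, dX)^{c}$ indistinguishably. Applying property \ref{eqWeakIntegContPart} of the real-valued stochastic integral yields $(\int R'\psi \, dX)^{c} = \int R'\psi \, dX^{c}$, and a second application of \eqref{eqConstrStrongStocInteg}, now to the integrator $X^{c}$, identifies this with $\inner{\int R \, dX^{c}}{\psi}$. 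Since $\psi$ was arbitrary, Remark~\ref{remarkEqualInSemimartingale} yields indistinguishability of the two $\Psi'$-valued semimartingales.

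Part (2) is analogous. For $\psi \in \Psi$, first use that stopping commutes with the real-valued duality pairing to get $\inner{(\int R \, dX)^{\tau}}{\psi} = \inner{\int R \, dX}{\psi}^{\tau} = (\int R'\psi \, dX)^{\tau}$ indistinguishably, then apply \ref{eqWeakIntegStopping} to obtain
\[
(\textstyle\int R'\psi \, dX)^{\tau} \;=\; \textstyle\int (R'\psi)\mathbbm{1}_{[0,\tau]} \, dX \;=\; \textstyle\int R'\psi \, dX^{\tau}.
\]
Recognizing that $(R'\psi)\mathbbm{1}_{[0,\tau]} = (R\mathbbm{1}_{[0,\tau]})'\psi$ and invoking \eqref{eqConstrStrongStocInteg} for each of the integrators $X$ and $X^{\tau}$ rewrites the two terms on the right as $\inner{\int R\mathbbm{1}_{[0,\tau]} \, dX}{\psi}$ and $\inner{\int R \, dX^{\tau}}{\psi}$ respectively. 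One final appeal to Remark~\ref{remarkEqualInSemimartingale} concludes the proof. The only delicate point, and the one I would be most careful about, is confirming that the definition of the continuous part of the $\Psi'$-valued semimartingale $\int R \, dX$ commutes with $\inner{\cdot}{\psi}$; this is exactly the defining property recalled at the end of Section~\ref{subSectionCylAndStocProcess}, whose hypothesis (continuity of $\Psi \to S^{0}$) is guaranteed by the very construction in Theorem~\ref{theoConstStrongStochInteg}.
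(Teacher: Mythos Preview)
Your proof is correct and follows essentially the same approach as the paper: test against an arbitrary $\psi \in \Psi$, reduce to the real-valued identities \ref{eqWeakIntegContPart} and \ref{eqWeakIntegStopping}, and conclude via Remark~\ref{remarkEqualInSemimartingale}. You are slightly more explicit than the paper about why $R\mathbbm{1}_{[0,\tau]} \in b\mathcal{P}(\Phi',\Psi')$ and about the commutation of $\inner{\cdot}{\psi}$ with the continuous part, which is a good habit.
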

\begin{proof} 
First, by Proposition \ref{propGoodIntegraContPartStopping} $X^{c}$ and $X^{\tau}$ are $S^{0}$-good integrators and the stochastic integrals 
$\int R \, dX^{c}$ and $\int R \, dX^{\tau}$ are well-defined as elements in $S^{0}(\Psi')$. 

Now let $R \in b\mathcal{P}(\Phi',\Psi')$. Then for each $\psi \in \Psi$ we have by \eqref{eqConstrStrongStocInteg} and  \eqref{eqContinuPartAndStoppingWeakIntegral} that (as elements in $S^{0}$):
\begin{eqnarray*}
 \inner{\left(\int R \, dX \right)^{c}}{\psi} & = & \left( \inner{\int R \, dX }{\psi} \right)^{c} = \left( \int \, R'\psi \, dX  \right)^{c} \\
& = &  \int \, R' \psi \, dX^{c} = \inner{\int \, R \, dX^{c}}{\psi}. 
\end{eqnarray*}
Then $\left(\int R \, dX \right)^{c}$ and $\int R \, dX^{c}$ are indistinguishable by Remark \ref{remarkEqualInSemimartingale}. Similarly, we obtain \ref{propStochIntegStoppingTimes}. 
\end{proof}


In the following result we gather some permanence properties of the stochastic integral for particular classes of integrators. 

\begin{proposition}\label{propGoodIntegratorsNuclearSpace}
Let $\Psi$ be a quasi-complete bornological nuclear space and  $\Phi$ a complete barrelled nuclear space. Let $X=(X_{t}: t \geq 0)$ be a $\Phi'$-valued $(\mathcal{F}_{t})$-adapted semimartingale such that the mapping $\phi \mapsto X(\phi)$ is continuous from $\Phi$ into $S^{0}$. For each $R \in b\mathcal{P}(\Phi',\Psi')$ we have:  
\begin{enumerate}
\item \label{integralHpSemimartingale} If $X$ is a  $\mathcal{H}^{p}_{S}$-semimartingale, then $\int \, R \, dX$ is a $\mathcal{H}^{p}_{S}$-semimartingale. Furthermore, there exists a continuous Hilbertian seminorm $q$ on such that $\int \, R \, dX$ has a  $\Phi'_{q}$ version which is a c\`{a}dl\`{a}g $\mathcal{H}^{p}_{S}$-semimartingale.
\item \label{integralSquareMartingale} If $X$ is a $\mathcal{M}_{\infty}^{2}$-martingale, then $\int \, R \, dX$ is a  $\mathcal{M}_{\infty}^{2}$-martingale. Furthermore, there exists a continuous Hilbertian seminorm $q$ on such that $\int \, R \, dX$ has a  $\Phi'_{q}$ version which is a c\`{a}dl\`{a}g $\mathcal{M}_{\infty}^{2}$-martingale.
\item \label{integralIntegrableVariation} If $X$ is a $\mathcal{A}$-semimartingale, then $\int \, R \, dX$ is a $\mathcal{A}$-semimartingale. Furthermore, there exists a continuous Hilbertian seminorm $q$ on such that $\int \, R \, dX$ has a $\Phi'_{q}$ version which is a c\`{a}dl\`{a}g $\mathcal{A}$-semimartingale. 
\end{enumerate}
\end{proposition}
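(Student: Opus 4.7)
The plan is to reduce all three claims to the corresponding real-valued facts via the defining identity \eqref{eqConstrStrongStocInteg}, and then produce the Hilbert space-valued version through a radonification that exploits the nuclearity of $\Psi$. First, in each case the hypothesis on $X$ implies (via Proposition 4.12 in \cite{FonsecaMora:StochInteg}) that $X$ is a $S^{0}$-good integrator, so that $\int R\,dX$ is a well-defined element of $S^{0}(\Psi')$ by Theorem \ref{theoConstStrongStochInteg}. By Corollary \ref{coroEquivStrongIntegrands}, for each $\psi \in \Psi$ we have $R'\psi \in b\mathcal{P}(\Phi)$, and \eqref{eqConstrStrongStocInteg} identifies $\inner{\int R\,dX}{\psi}$ with the real-valued integral $\int R'\psi\,dX$ up to indistinguishability. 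The same Proposition 4.12 in \cite{FonsecaMora:StochInteg} guarantees that the real-valued integral inherits the class of $X$, so that $\inner{\int R\,dX}{\psi}$ lies in $\mathcal{H}^{p}_{S}$ (respectively $\mathcal{M}^{2}_{\infty}$, $\mathcal{A}$) for every $\psi$, yielding the first assertion in each of \ref{integralHpSemimartingale}--\ref{integralIntegrableVariation}.

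For the Hilbert space-valued version, let $\goth{S}$ denote the Banach space in which the hypothesis places $X$. The operator $T \colon \Psi \to \goth{S}$ defined by $T\psi \defeq \int R'\psi\,dX$ is linear, and it is the composition of the continuous map $\psi \mapsto R'\psi$ from $\Psi$ into $b\mathcal{P}(\Phi)$ (Corollary \ref{coroEquivStrongIntegrands}) with the stochastic integral map $H \mapsto \int H\,dX$ from $b\mathcal{P}(\Phi)$ into $\goth{S}$ (whose continuity is established in the proof of Proposition 4.12 in \cite{FonsecaMora:StochInteg}), hence $T$ is continuous. Since $\Psi$ is nuclear, there exist continuous Hilbertian seminorms $p \leq q$ on $\Psi$ with $i_{p,q}\colon \Psi_{q} \to \Psi_{p}$ Hilbert--Schmidt, such that $T$ factors continuously through $\Psi_{p}$. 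Following the regularization scheme used in Example \ref{examDiracGoodIntegrator} (combining Theorem 4.3 in \cite{FonsecaMora:Existence} with Theorem A in \cite{JakubowskiEtAl:Radonification}) one then obtains a $\Psi'_{q}$-valued c\`adl\`ag process that is an indistinguishable version of $\int R\,dX$; its membership in $\goth{S}$ as a $\Psi'_{q}$-valued semimartingale follows from the uniform norm bounds transferred by $T$.

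The principal obstacle is ensuring that the radonification preserves the class membership in $\goth{S}$: the $\Psi'_{q}$-valued version must not only be c\`adl\`ag and regular, but must actually lie in the prescribed Banach space of semimartingales. This is handled precisely by the continuity of $T$ into $\goth{S}$, which, through the Hilbert--Schmidt factorization, provides the uniform control of the $\goth{S}$-norm on bounded subsets of $\Psi_{q}$ needed to upgrade the standard regularization statement. The treatment of case \ref{integralIntegrableVariation} differs only cosmetically: one works with the variation functional in place of the quadratic-variation-type quantity defining the $\mathcal{H}^{p}_{S}$ norm, but the factorization and radonification argument proceeds verbatim.
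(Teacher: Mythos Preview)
Your proposal is correct and follows essentially the same route as the paper: the first assertion in each item is obtained exactly as you describe, via \eqref{eqConstrStrongStocInteg} together with Proposition 4.12 in \cite{FonsecaMora:StochInteg}. For the Hilbert space-valued version, the paper simply invokes the ready-made regularization statements---Proposition 3.22 in \cite{FonsecaMora:Semi} for items \ref{integralHpSemimartingale} and \ref{integralIntegrableVariation}, and Theorem 5.2 in \cite{FonsecaMora:Existence} for item \ref{integralSquareMartingale}---whereas you unpack their mechanism (continuity of $T:\Psi\to\goth{S}$, nuclear factorization, Hilbert--Schmidt radonification); this is the same argument at one level of abstraction lower.
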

\begin{proof}
To prove \ref{integralHpSemimartingale}, by \eqref{eqConstrStrongStocInteg} and Proposition 4.12(1) in \cite{FonsecaMora:Semi} we have that $X$ is a $S^{0}$-good integrator and $\int \, R \, dX$ is a $\mathcal{H}^{p}_{S}$-semimartingale. The existence of the continuous Hilbertian  seminorm $q$ and the $\Phi'_{q}$-valued c\`{a}dl\`{a}g version which is a $\mathcal{H}^{p}_{S}$-semimartingale follows by Proposition 3.22 in \cite{FonsecaMora:Semi}. 

Similarly, \ref{integralSquareMartingale} follows from 
 Proposition 4.12(2) in \cite{FonsecaMora:Semi} and Theorem 5.2 in \cite{FonsecaMora:Existence}. Finally, \ref{integralIntegrableVariation} follows from  Proposition 4.12(3) in \cite{FonsecaMora:Semi} and Proposition 3.22 in \cite{FonsecaMora:Semi}. 
\end{proof}

\subsection{Extension of the $S^{0}$-stochastic integral}\label{sectExtStochIntegral}

In this section we carry out a extension of the $S^{0}$-stochastic integral for integrands that are locally bounded in the following sense:

\begin{definition}\label{defiLocallyBoundedIntegrands}
Given two locally convex spaces $\Psi$ and $\Phi$, denote by $\mathcal{P}_{loc}(\Phi',\Psi')$ the collection of all the mappings $R: \R_{+} \times \Omega \rightarrow \mathcal{L}(\Phi',\Psi')$ for which there exists a sequence $(\tau_{n}: n \in \N)$ of stopping times increasing to $\infty$ $\Prob$-a.e. such that for each $n \in \N$, $(t,\omega) \mapsto R^{\tau_{n}}(t,\omega) \defeq R(t \wedge \tau_{n}(\omega), \omega)$ belongs to $b \mathcal{P}(\Phi', \Psi')$. We call $(\tau_{n}: n \in \N)$ a \emph{localizing sequence} for $R$. 
\end{definition}

It should be clear that $b \mathcal{P}(\Phi', \Psi') \subseteq  \mathcal{P}_{loc}(\Phi', \Psi')$. Moreover, observe that if $R:\R_{+} \times \Omega \rightarrow \mathcal{L}(\Phi',\Psi')$ is such that $R \in \mathcal{P}_{loc}(\Phi', \Psi')$ we have $ \inner{R f}{ \psi} \in \mathcal{P}_{loc}$ for every $f \in \Phi'$ and $\psi \in \Psi$. The converse is less clear to hold because   the localizing sequence for  $\inner{R f}{ \psi} $ might depend on $f$ and $\psi$.  

The following result will be useful to generate examples of locally bounded integrands. 
Let $H: \R_{+} \times \Omega \rightarrow \Phi$ and $G: \R_{+} \times \Omega \rightarrow \Psi'$. Define $R: \R_{+} \times \Omega \rightarrow \mathcal{L}(\Phi', \Psi')$ by $R(t,\omega)=H(t,\omega) \otimes G(t,\omega)$, that is
\begin{equation}\label{eqDefiIntegrandAsTensorProduct}
R(t,\omega)f=\inner{f}{H(t,\omega)} G(t,\omega), \quad \forall t \geq 0, \, \omega \in \Omega, \, f \in \Phi'. 
\end{equation}

\begin{proposition}\label{propConstrucPredicLocallyBoundedByTensors} Let $\Psi$ and $\Phi$ be two reflexive locally convex spaces. For $R$ as in \eqref{eqDefiIntegrandAsTensorProduct} we have:
\begin{enumerate}
\item If $H \in b\mathcal{P}(\Phi)$ and $G \in b\mathcal{P}(\Psi')$, then $R \in b \mathcal{P}(\Phi', \Psi')$.
\item If $H \in \mathcal{P}_{loc}(\Phi)$ and $G \in b\mathcal{P}_{loc}(\Psi')$, then $R \in  \mathcal{P}_{loc}(\Phi', \Psi')$.
\end{enumerate} 
\end{proposition}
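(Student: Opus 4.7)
The plan is to exploit the rank-one tensor structure of $R$ and the factorization
$$\inner{R(t,\omega) f}{\psi} = \inner{f}{H(t,\omega)} \cdot \inner{G(t,\omega)}{\psi}, \qquad f \in \Phi',\ \psi \in \Psi,$$
which reduces every pairing of $R$ to a product of two real-valued processes. A preliminary check shows that for each fixed $(t,\omega)$ the map $R(t,\omega) \colon f \mapsto \inner{f}{H(t,\omega)} G(t,\omega)$ indeed belongs to $\mathcal{L}(\Phi',\Psi')$, since it factors as the continuous scalar evaluation at $H(t,\omega) \in \Phi$ followed by multiplication by $G(t,\omega) \in \Psi'$.

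For part (1), weak predictability of $R$ is then immediate from the identity above, each pairing being a product of two predictable scalar processes: $\inner{f}{H} \in b\mathcal{P}$ from $H \in b\mathcal{P}(\Phi)$ and $\inner{G}{\psi} \in b\mathcal{P}$ from $G \in b\mathcal{P}(\Psi')$. For weak boundedness, let $B \subseteq \Phi'$ be strongly bounded. The set $K \defeq \{H(t,\omega) \st t \geq 0,\ \omega \in \Omega\}$ is bounded in $\Phi$ (this is precisely what $H \in b\mathcal{P}(\Phi)$ asserts, via the seminorm description of the topology on $b\mathcal{P}(\Phi)$), so the definition of the strong dual topology on $\Phi'$ gives $\sup_{f \in B} \sup_{(t,\omega)} \abs{\inner{f}{H(t,\omega)}} = \sup_{f \in B} \eta_{K}(f) < \infty$. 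Multiplying by the finite quantity $\sup_{(t,\omega)} \abs{\inner{G(t,\omega)}{\psi}}$ completes the verification; reflexivity plays no substantive role in this half beyond what is already built into the definitions.

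For part (2), let $(\sigma_n)$ and $(\tau_n)$ be localizing sequences for $H$ and $G$ respectively, and put $\rho_n = \sigma_n \wedge \tau_n$. The goal is to apply part (1) to the stopped pair in order to conclude $R^{\rho_n} = H^{\rho_n} \otimes G^{\rho_n} \in b\mathcal{P}(\Phi',\Psi')$, which is exactly the membership of $R$ in $\mathcal{P}_{loc}(\Phi',\Psi')$. The main obstacle is that the definition of $\mathcal{P}_{loc}(\Phi)$ only guarantees that $H^{\sigma_n}(\cdot,\omega)$ lies in an $\omega$-dependent bounded subset of $\Phi$ for $\Prob$-a.e.\ $\omega$, whereas $b\mathcal{P}(\Phi)$ demands uniform boundedness in both $t$ and $\omega$; the same asymmetry affects $G$. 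To bridge this gap I would introduce a secondary refinement by escape times: for suitable fundamental sequences of continuous seminorms $(p_k)$ on $\Phi$ and $(q_k)$ on $\Psi'$, set
$$\rho_n^{N} = \rho_n \wedge \inf \bigl\{ t \geq 0 \st \max_{k \leq n} \bigl( p_k(H(t,\cdot)) + q_k(G(t,\cdot)) \bigr) > N \bigr\}.$$
For each fixed $n$, the path-wise boundedness of $H^{\sigma_n}$ and $G^{\tau_n}$ forces $\rho_n^{N} \uparrow \rho_n$ $\Prob$-a.s.\ as $N \to \infty$, so a diagonal choice $N = N(n)$ delivers $\rho_n^{N(n)} \uparrow \infty$ $\Prob$-a.s.\ while $H^{\rho_n^{N(n)}}$ and $G^{\rho_n^{N(n)}}$ are bounded by $N(n)$ in the seminorms $p_k,\, q_k$ with $k \leq n$. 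Applying part (1) to this refined pair then concludes the argument.
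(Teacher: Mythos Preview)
Your part (1) is correct and essentially the same as the paper's. The only difference is that where you verify weak boundedness directly (using that the image of $H$ is a bounded subset of $\Phi$, so that $\sup_{f\in B}\eta_K(f)<\infty$ for any strongly bounded $B\subseteq\Phi'$), the paper instead checks only the pointwise condition $\sup_{(t,\omega)}\abs{\inner{R(t,\omega)f}{\psi}}<\infty$ for each fixed $f,\psi$ and then invokes Lemma~\ref{lemmStrongIntegBarrelled} to upgrade this to weak boundedness, using that $\Phi'$ is barrelled by reflexivity. Both routes are fine; yours is in fact slightly more self-contained and explains why reflexivity is not really needed for this half.

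For part (2) the paper's proof is exactly the short argument you write down and then abandon: take $\tau_n=\sigma_n\wedge\nu_n$, note this is a localizing sequence for both $H$ and $G$, and apply part (1) to the stopped pair to get $R^{\tau_n}\in b\mathcal{P}(\Phi',\Psi')$. Your concern that the definition of $\mathcal{P}_{loc}(\Phi)$ in Section~\ref{sectRealValuedStochIntegration} only guarantees an $\omega$-dependent bounded set (rather than $H^{\tau_n}\in b\mathcal{P}(\Phi)$ outright) is a careful reading, but the paper does not address it and treats the direct application of (1) as sufficient. Your proposed escape-time refinement, however, does not close the gap in the stated generality: the existence of countable fundamental systems $(p_k)$, $(q_k)$ of seminorms presupposes that $\Phi$ and $\Psi'$ are metrizable, which is nowhere assumed; and for your $\rho_n^N$ to be stopping times you would need the processes $t\mapsto p_k(H(t,\cdot))$ to be adapted with some path regularity, which weak predictability of $H$ alone does not provide. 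So either the paper's short argument is the intended one (under the reading that the localizing sequence already places $H^{\tau_n}$ in $b\mathcal{P}(\Phi)$ up to a null set), or there is a genuine issue that your construction does not fix without further hypotheses.
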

\begin{proof}
(1) Assume $H \in b\mathcal{P}(\Phi)$ and $G \in b\mathcal{P}(\Psi')$. We must show $R$ is weakly predictable and weakly bounded. To see why $R$ is weakly predictable, let $f \in \Phi'$ and $\psi \in \Psi$. Then by the weak predictability of $H$ and $G$ it follows that the mapping 
$$(t,\omega) \mapsto \inner{R(t,\omega)f}{\psi} = \inner{f}{H(t,\omega)}\inner{G(t,\omega)}{\psi}, $$
is predictable. 

Now, to show that $R$ is weakly bounded, let $f \in \Phi'$ and $\psi \in \Psi$ Then, since $H$ and $G$ are weakly bounded we have
$$
\sup_{(t,\omega)} \abs{\inner{R(t,\omega)f}{\psi}} 
= \sup_{(t,\omega)} \abs{\inner{f}{H(t,\omega)}} \cdot \sup_{(t,\omega)} \abs{\inner{G(t,\omega)}{\psi}} < \infty.  $$
Finally,  $R$ is weakly bounded by Lemma \ref{lemmStrongIntegBarrelled}. 

(2) Let $(\sigma_{n}: n \in \N)$ and $(\nu_{n}: n \in \N)$ be localizing sequences for $H$ and $G$ respectively. Then, $\tau_{n}=\sigma_{n} \wedge \nu_{n}$ for $n\in \N$, is a localizing sequence for both $H$ and $G$. Then by (1) we have $R^{\tau_{n}} \in  b \mathcal{P}(\Phi', \Psi')$ for every $n \in \N$. Hence $R \in \mathcal{P}_{loc}(\Phi', \Psi')$.
\end{proof}

\begin{definition}
For a locally convex space $\Phi$, we denote by $\mathbbm{L}(\Phi)$ (respectively by $\mathbbm{D}(\Phi)$) the collection of all $\Phi$-valued $(\mathcal{F}_{t})$-adapted processes with c\`{a}gl\`{a}d paths (respectively with c\`{a}dl\`{a}g paths).
\end{definition}

For $Y=(Y_{t}: t \geq 0)$ a $\Phi$-valued process whose paths possesses left-limits, let $Y_{t-} \defeq \lim_{u \rightarrow s, u<s} Y_{u}$ and $Y_{0-} \defeq 0$. Observe that if $Y=(Y_{t}: t \geq 0) \in \mathbbm{D}(\Phi)$, then $Y_{-}=(Y_{t-}: t \geq 0) \in \mathbbm{L}(\Phi)$.

\begin{example}
Let $\varphi \in  \mathscr{D}(\R^{d})$ and let $Z=(Z_{t}: t \geq 0)$ be a $\R^{d}$-valued semimartingale. Define a $\mathscr{D}(\R^{d})$-valued process $H=(H(t,\omega): t\geq 0, \omega \in \Omega)$  by $H(t,\omega)=\varphi(\cdot + Z_{t-}(\omega))$. As shown in Example 5.7 in \cite{FonsecaMora:StochInteg}, we have $H \in \mathcal{P}_{loc}(\mathscr{D}(\R^{d}))$. 

Let $X=(X_{t}: t \geq 0)$ be a $\mathscr{D}'(\R^{d})$-valued c\`{a}dl\`{a}g weakly $(\mathcal{F}_{t})$-adapted process. Let $G=(G(t,\omega): t\geq 0, \omega \in \Omega)$ be the $\mathscr{D}'(\R^{d})$-valued process given by $G(t,\omega)=X_{t-}(\omega)$. By Example 5.5 in \cite{FonsecaMora:StochInteg}, we have $G \in \mathcal{P}_{loc}(\mathscr{D}'(\R^{d}))$. Then by Proposition \ref{propConstrucPredicLocallyBoundedByTensors} we have $R \in  \mathcal{P}_{loc}(\mathscr{D}'(\R^{d}), \mathscr{D}'(\R^{d}))$, for  $R(t,\omega)=H(t,\omega) \otimes G(t,\omega)$, that is
$$ R(t,\omega)f =\inner{f}{\varphi(\cdot + Z_{t-}(\omega))} X_{t-}(\omega), \quad \forall t \geq 0, \, \omega \in \Omega, \, f \in \mathscr{D}'(\R^{d}).  $$
\end{example}

The extension of the vector-valued $S^{0}$-stochastic integral is carried out in the next result.

\begin{theorem}\label{theoStochIntegLocallyBounded}
Let $\Psi$ be a quasi-complete bornological nuclear space and  $\Phi$ a complete barrelled nuclear space. Let  $X=(X_{t}: t \geq 0)$ be a $S^{0}$-good integrator in $\Phi'$. Then for each $R \in \mathcal{P}_{loc}(\Phi',\Psi')$ there exists a unique (up to indistinguishable versions) $\Psi'$-valued regular c\`{a}dl\`{a}g $(\mathcal{F}_{t})$-semimartingale $\int \, R \, dX= \left( \int_{0}^{t} R(r) \, dX_{r}: t \geq 0 \right)$, such that for each $\psi \in \Psi$, $\Prob$-a.e. 
\begin{equation}\label{eqWeakStrongLocalBounded}
\inner{\int_{0}^{t} R(r) dX_{r}}{\psi}= \int_{0}^{t} \, R'(r) \psi \, dX_{r}, \quad \forall \, t \geq 0. 
\end{equation}
Moreover, 
\begin{enumerate}
\item $\displaystyle{\left(\int R \, dX \right)^{c}= \int R \, dX^{c}}$. 
\item $\displaystyle{\left(\int R \, dX \right)^{\tau} =  \int R \mathbbm{1}_{[0,\tau]}  \, dX = \int R \, dX^{\tau}}$, for every stopping time $\tau$. 
\item The mapping $\displaystyle{(R,X) \mapsto \int \, R \, dX}$ is bilinear. 
\end{enumerate}
Furthermore, if  $X(\phi)$ has continuous paths for every $\phi \in \Phi$, then $\int \, R \, dX$ has continuous paths as a  $\Psi'$-valued process. 
\end{theorem}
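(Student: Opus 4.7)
The plan is to carry out a standard localization argument that reduces everything to the bounded case handled in Theorem \ref{theoConstStrongStochInteg}, combined with the stopping identity of Proposition \ref{propStrongIntegContPartStopping}. Fix $R \in \mathcal{P}_{loc}(\Phi',\Psi')$ with localizing sequence $(\tau_{n})$. First I would check that for each $n$ the process $R\mathbbm{1}_{[0,\tau_{n}]}$ belongs to $b\mathcal{P}(\Phi',\Psi')$: weak predictability follows because $\inner{(R\mathbbm{1}_{[0,\tau_{n}]})f}{\psi} = \inner{Rf}{\psi}\mathbbm{1}_{[0,\tau_{n}]}$ is a product of predictable processes, while weak boundedness holds since $R\mathbbm{1}_{[0,\tau_{n}]}$ agrees with $R^{\tau_{n}}$ on $[0,\tau_{n}]$ and vanishes thereafter. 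Theorem \ref{theoConstStrongStochInteg} then yields a process $Z^{n} := \int R\mathbbm{1}_{[0,\tau_{n}]} \, dX \in S^{0}(\Psi')$.

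The next step is compatibility. For $n \leq m$, Proposition \ref{propStrongIntegContPartStopping}\ref{propStochIntegStoppingTimes} gives
\[
(Z^{m})^{\tau_{n}} = \left( \int R\mathbbm{1}_{[0,\tau_{m}]} \, dX \right)^{\tau_{n}} = \int R\mathbbm{1}_{[0,\tau_{m}]}\mathbbm{1}_{[0,\tau_{n}]} \, dX = \int R\mathbbm{1}_{[0,\tau_{n}]} \, dX = Z^{n}.
\]
Since $\tau_{n} \uparrow \infty$ $\Prob$-a.e., I define $\int R \, dX$ to be the unique $\Psi'$-valued process $Y$ characterized (up to indistinguishable versions) by $Y^{\tau_{n}} = Z^{n}$ for every $n$. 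Regularity and the c\`adl\`ag property are inherited from those of each $Z^{n}$; the semimartingale property then follows fiberwise by applying Theorem II.2.6 of \cite{Protter} to each $\inner{Y}{\psi}$. Independence of $Y$ from the choice of localizing sequence is obtained by applying the same stopping identity to the minimum of two candidate sequences.

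To verify \eqref{eqWeakStrongLocalBounded}, observe that for each $\psi \in \Psi$ we have $R'\psi \in \mathcal{P}_{loc}(\Phi)$, since $(R'\psi)^{\tau_{n}} = (R^{\tau_{n}})'\psi \in b\mathcal{P}(\Phi)$ by Corollary \ref{coroEquivStrongIntegrands}. Consequently $\int R'\psi \, dX$ is defined by Theorem 5.8 in \cite{FonsecaMora:StochInteg} and satisfies the stopping rule analogous to \ref{eqWeakIntegStopping}. Applying \eqref{eqConstrStrongStocInteg} to $Z^{n}$ then gives
\[
\inner{Y^{\tau_{n}}}{\psi} = \inner{Z^{n}}{\psi} = \int (R\mathbbm{1}_{[0,\tau_{n}]})'\psi \, dX = \int (R'\psi)\mathbbm{1}_{[0,\tau_{n}]} \, dX = \left( \int R'\psi \, dX \right)^{\tau_{n}},
\]
and letting $n \to \infty$ yields \eqref{eqWeakStrongLocalBounded}. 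Uniqueness of $Y$ is then immediate from Remark \ref{remarkEqualInSemimartingale}.

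Finally, properties (1)--(3) and the continuous-path assertion propagate by the same localization technique. For (1), Proposition \ref{propGoodIntegraContPartStopping} ensures $X^{c}$ is a $S^{0}$-good integrator, and Proposition \ref{propStrongIntegContPartStopping} shows that both $(Y^{c})^{\tau_{n}}$ and $\left(\int R \, dX^{c}\right)^{\tau_{n}}$ coincide with $\int R\mathbbm{1}_{[0,\tau_{n}]} \, dX^{c}$, so the identity follows after passing $n \to \infty$. The stopping identity (2) and bilinearity (3) are entirely analogous, reducing to Proposition \ref{propStrongIntegContPartStopping}\ref{propStochIntegStoppingTimes}, Proposition \ref{propLinerStrongIntegralIntegrands}, and Proposition \ref{propLinerStrongIntegralIntegrators}, respectively. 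If $X(\phi)$ has continuous paths for every $\phi$, Theorem \ref{theoConstStrongStochInteg} supplies continuous versions of each $Z^{n}$, which patch to a continuous $Y$ by the compatibility. The main obstacle is purely bookkeeping: verifying that the pasted process is well defined and indistinguishable regardless of the localizing sequence, which is immediate once the stopping identity of Proposition \ref{propStrongIntegContPartStopping} is available.
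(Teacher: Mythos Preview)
Your proof is correct and follows essentially the same localization strategy as the paper: reduce to $R\mathbbm{1}_{[0,\tau_{n}]} \in b\mathcal{P}(\Phi',\Psi')$, invoke Theorem \ref{theoConstStrongStochInteg}, use Proposition \ref{propStrongIntegContPartStopping}\ref{propStochIntegStoppingTimes} for consistency, and then transfer properties (1)--(3) from the bounded case. If anything, you supply more detail than the paper does---for instance in explicitly verifying the compatibility $(Z^{m})^{\tau_{n}} = Z^{n}$ and in checking \eqref{eqWeakStrongLocalBounded} via $R'\psi \in \mathcal{P}_{loc}(\Phi)$---whereas the paper simply refers to a ``standard localization argument'' and cites \cite{DaPratoZabczyk}.
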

\begin{proof}
Given $R \in \mathcal{P}_{loc}(\Phi',\Psi')$ and a sequence  $(\tau_{n}: n \in \N)$ of stopping times as in Definition \ref{defiLocallyBoundedIntegrands}, it is clear that $R \mathbbm{1}_{[0,\tau_{n}]} \in b \mathcal{P}(\Phi',\Psi')$ for each $n \in \N$. By Theorem \ref{theoConstStrongStochInteg} each 
$ \int \, R \mathbbm{1}_{[0,\tau_{n}]} \, dX$ is a well-defined element in $S^{0}(\Psi')$ satisfying \eqref{eqConstrStrongStocInteg} for $R \mathbbm{1}_{[0,\tau_{n}]}$.

Then for each $t \geq 0$ we can define 
 $$ \int_{0}^{t} \, R \, dX = \int_{0}^{t} \, R \mathbbm{1}_{[0,\tau_{n}]} \,  dX, $$
for any $n \in \N$ such that $\tau_{n} \geq t$. Using Theorem \ref{propStrongIntegContPartStopping}(2) one can verify by following a standard localization argument (as for example in \cite{DaPratoZabczyk}, Chapter 6) that this definition for $\int \, R \, dX$ is consistent and that it is  independent of the localizing sequence for $R$. Since the property of being an element in $S^{0}(\Psi')$ is stable by localization, then  $\int \, R \, dX \in S^{0}(\Psi')$. In case $X(\phi) \in S^{c}$ for every $\phi \in \Phi$, then $\int \, R \, dX$ has continuous paths as a  $\Psi'$-valued process by Theorem \ref{theoConstStrongStochInteg}. 

Finally that properties  (1)-(3) are satisfied follows from Propositions
\ref{propLinerStrongIntegralIntegrands} and \ref{propLinerStrongIntegralIntegrators}, and Theorem \ref{propStrongIntegContPartStopping} by choosing an appropriate localizing sequence. 
\end{proof}

\begin{remark}
By choosing an appropriate localizing sequence we can check that all the properties of the stochastic integral listed in Section \ref{subSectProperStochInteg} (except Proposition \ref{propGoodIntegratorsNuclearSpace}) have an analogue for integrands in $R \in \mathcal{P}_{loc}(\Phi',\Psi')$. 
\end{remark}

\begin{example}\label{examVectorStochIntegRealValueSemimar}
In this example we apply our theory to construct stochastic integrals for vector-valued integrands with respect to real-valued semimartingales. An application for this construction will be given in Section \ref{sectItoFormula}. 

Let $z=(z_{t}: t \geq 0)$ be a real-valued semimartingale. By the Bichteler-Dellacherie theorem $z$ is a $S^{0}$-good integrator. Let $R \in \mathcal{P}_{loc}(\Psi')$.  Since the scalar multiplication is a continuous operation, we can regard $R(t,\omega)$ as an element in $\mathcal{L}(\R,\Psi')$ via the prescription $a \in \R \mapsto a \cdot R(t,\omega)$. With the above identification it is clear that $R \in \mathcal{P}_{loc}(\R,\Psi')$. 

 Therefore, by Theorem \ref{theoStochIntegLocallyBounded} we can define the stochastic integral $\int_{0}^{t} \, R(r) \, dz_{r}$, $t \geq 0$, which is a $\Psi'$-valued regular  c\`{a}dl\`{a}g $(\mathcal{F}_{t})$-semimartingale. Moreover, by \eqref{eqWeakStrongLocalBounded}, $\Prob$-a.e. for every $\psi \in \Psi$ and $t \geq 0$, we have 
\begin{equation}\label{eqWeakStrongCompaIntegralRealSemima}
\inner{\int_{0}^{t} \, R(r) \, dz_{r}}{\psi}= \int_{0}^{t} \, \inner{\, R(r)}{\psi} \, dz_{r},
\end{equation}
where the integral in the right-hand side of \eqref{eqWeakStrongCompaIntegralRealSemima} is a (classical) real-valued stochastic integral with respect to $z$.
%
\end{example}



\subsection{Riemann Representation}\label{sectRiemannRepresentation}

Our next objective is to prove that the $S^{0}$-stochastic integral satisfies a Riemann representation. We adopt the following terminology from the definition from \cite{Protter} (see \cite{Protter}, II.5, p.64); here we state $(2)$ for $\Phi$-valued processes. 

\begin{definition} \hfill
\begin{enumerate}
\item A \emph{random partition} $\sigma$ is a finite sequence of finite stopping times:
$$ 0=\tau_{0} \leq \tau_{1} \leq \cdots \leq \tau_{m+1} < \infty. $$
\item For a locally convex space $\Phi$, given a $\Phi$-valued process $R$ and a random partition $\sigma$, we define the process $R$ \emph{sampled} at $\sigma$ to be
\begin{equation}\label{eqDefiSampledProcessLCS}
 R^{\sigma} \defeq R_{0} \mathbbm{1}_{\{0\}}+\sum_{k=1}^{m} R_{\tau_{k}} \mathbbm{1}_{(\tau_{k},\tau_{k+1}]}.
\end{equation}
\item A sequence of random partitions $(\sigma_{n})$, 
$$ \sigma_{n}: \tau_{0}^{n} \leq \tau_{1}^{n} \leq \dots \leq \tau^{n}_{m_{n}+1}, $$
is said to \emph{tend to the identity} if 
\begin{enumerate}
\item $\displaystyle{\lim_{n} \sup_{k} \tau^{n}_{k}=\infty}$ a.s., and
\item $\norm{\sigma_{n}}=\sup_{k} \abs{\tau_{k+1}^{n}-\tau_{k}^{n}} \rightarrow 0$ a.s.
\end{enumerate}  
\end{enumerate}
\end{definition}

We recall the notion of UCP convergence for a sequence of stochastic processes taking values in the dual of a locally convex space. 

\begin{definition}
Let $\Psi$ be a Hausdorff locally convex space and let $\Pi$ denote a system of seminorms generating the  topology on $\Psi'$. Let $X$ and $(X^{n}: n \in \N)$, with $X^{n} =(X^{n}_{t}: t \geq 0)$, be $\Psi'$-valued c\`{a}dl\`{a}g  processes. 
We say that $X^{n}$ converges to $X$ \emph{uniformly on compacts in probability}, abbreviated as $X^{n} \overset{ucp}{\rightarrow} X$, if for every choice of $T>0$, $\epsilon >0$, and every continuous seminorm $p$ on $\Psi'$ we have 
$$ \lim_{n \rightarrow \infty} \Prob \left( \sup_{0 \leq t \leq T} p(X^{n}_{t}-X_{t}) \geq \epsilon \right)=0.  $$
\end{definition}

The reader is referred to \cite{FonsecaMora:UCPConvergence} for further properties and sufficient conditions for the UCP convergence for a sequence of $\Psi'$-valued processes in the dual of a nuclear space $\Psi'$.

We are ready to state the main result of this section, which provides an extension of the usual theorem for classical stochastic integrals (see Theorem 21, p.64 of \cite{Protter}) to vector-valued $S^{0}$-stochastic integrals, by using $S^{0}$-good integrators.    

\begin{theorem}[Approximation by Riemann sums]\label{theoRiemannRepresentation}
Suppose that $\Psi$ is a quasi-complete bornological nuclear space and that $\Phi$ is a complete barrelled nuclear space whose strong dual $\Phi'$ is also nuclear. Let  $X=(X_{t}: t \geq 0)$ be a $S^{0}$-good integrator in $\Phi'$. Assume that $R$ is a process in $\mathbbm{L}(\mathcal{L}_{b}(\Phi',\Psi')) \cap \mathcal{P}_{loc}(\Phi',\Psi')$ or in $\mathbbm{D}(\mathcal{L}_{b}(\Phi',\Psi')) \cap \mathcal{P}_{loc}(\Phi',\Psi')$. Let $(\sigma_{n})$ be a sequence of random partitions tending to the identity. Then, 
\begin{equation}\label{eqIntegOfStoppedProcess}
\int_{0}^{t} \, R^{\sigma_{n}} \, dX =R(0)(X_{0})  +\sum_{k=1}^{m_{n}} \, R(\tau^{n}_{k}\wedge t) \left( X_{\tau^{n}_{k+1} \wedge t}-X_{\tau^{n}_{k} \wedge t} \right),   
\end{equation}
and 
\begin{equation}\label{eqUCPConvergRiemannRepre}
\int \, R^{\sigma_{n}} \, dX \overset{ucp}{\rightarrow} \int \, R_{-} \, dX.
\end{equation}
\end{theorem}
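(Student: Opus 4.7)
My plan is to reduce both assertions of the theorem to the corresponding statements for the real-valued $S^{0}$-stochastic integral via the defining identity \eqref{eqWeakStrongLocalBounded}, and then promote pointwise-in-$\psi$ UCP convergence to UCP convergence in the strong dual topology of $\Psi'$ using a suitable equicontinuity argument.

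First I would establish the explicit Riemann formula \eqref{eqIntegOfStoppedProcess}. Fix $\psi \in \Psi$ and observe that taking transposes commutes with sampling, that is, $(R^{\sigma_n})'\psi = (R'\psi)^{\sigma_n}$ as $\Phi$-valued processes. By \eqref{eqWeakStrongLocalBounded}, the real-valued process $\inner{\int_0^t R^{\sigma_n}(r)\,dX_r}{\psi}$ is indistinguishable from $\int_0^t (R'\psi)^{\sigma_n}(r)\,dX_r$. The integrand $(R'\psi)^{\sigma_n}$ is a finite sum of terms of the form $(R(\tau^n_k)'\psi)\mathbbm{1}_{(\tau^n_k,\tau^n_{k+1}]}$ with $\mathcal{F}_{\tau^n_k}$-measurable coefficients in $\Phi$, so a standard application of the bilinearity \ref{properBilinearity} together with the stopping-time identity \ref{eqWeakIntegStopping} yields
\begin{equation*}
\int_0^t (R'\psi)^{\sigma_n}\,dX = \inner{R(0)X_0}{\psi} + \sum_{k=1}^{m_n}\inner{R(\tau^n_k\wedge t)(X_{\tau^n_{k+1}\wedge t}-X_{\tau^n_k\wedge t})}{\psi}.
\end{equation*}
Since $\psi \in \Psi$ is arbitrary, Remark \ref{remarkEqualInSemimartingale} gives \eqref{eqIntegOfStoppedProcess}.

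For the UCP convergence \eqref{eqUCPConvergRiemannRepre}, the same reduction shows that, for every $\psi \in \Psi$,
\begin{equation*}
\inner{\int R^{\sigma_n}\,dX - \int R_-\,dX}{\psi} \;=\; \int (R'\psi)^{\sigma_n}\,dX \,-\, \int (R'\psi)_-\,dX \quad \text{in } S^0.
\end{equation*}
Since $R'\psi$ has c\`agl\`ad (respectively c\`adl\`ag) paths in $\Phi$ and is locally bounded (because $R \in \mathcal{P}_{loc}(\Phi',\Psi')$, via Corollary \ref{coroEquivStrongIntegrands}), and since $X$ is an $S^{0}$-good integrator, the classical Riemann representation for the real-valued $S^{0}$-stochastic integral established in Section 5 of \cite{FonsecaMora:Semi} forces the right-hand side to converge to $0$ in UCP. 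This yields the cylindrical (pointwise in $\psi$) UCP convergence of the $\Psi'$-valued processes.

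The main obstacle is upgrading this \emph{cylindrical} UCP convergence to UCP convergence in the strong topology of $\Psi'$, i.e.\ obtaining uniformity over $\psi$ in each bounded subset of $\Psi$. The key is that, by Corollary \ref{coroEquivStrongIntegrands} and the local boundedness of $R$ (through a localizing sequence shared by the $R^{\sigma_n}$), the family of cylindrical semimartingales $\{\,\psi \mapsto \int (R^{\sigma_n})'\psi\,dX\,\}_{n \in \N}$, along with the limit $\psi \mapsto \int (R_-)'\psi\,dX$, is equicontinuous from $\Psi$ into $S^{0}$. Combining this equicontinuity with the pointwise UCP convergence above and invoking the UCP convergence criterion for regular $\Psi'$-valued processes developed in \cite{FonsecaMora:UCPConvergence}, one transfers convergence from a generating family of $\psi$'s to uniformity on bounded sets, yielding \eqref{eqUCPConvergRiemannRepre}. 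The nuclearity of $\Phi'$ enters at this last step by guaranteeing that the regular $\Psi'$-valued stochastic integrals factor through a common countably Hilbertian topology on $\Psi$, which is what makes the equicontinuity-to-UCP passage available.
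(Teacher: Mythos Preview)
Your approach is essentially the same as the paper's: reduce to the real-valued Riemann representation via the duality identity \eqref{eqWeakStrongLocalBounded} and the observation $(R^{\sigma_n})'\psi=(R'\psi)^{\sigma_n}$, then upgrade cylindrical UCP convergence to strong $\Psi'$-valued UCP convergence by invoking the criterion from \cite{FonsecaMora:UCPConvergence}. Two minor corrections: the real-valued Riemann representation you need is Theorem~5.14 in \cite{FonsecaMora:StochInteg} (not \cite{FonsecaMora:Semi}), and the nuclearity of $\Phi'$ is a hypothesis of \emph{that} theorem---it is needed for the scalar Riemann approximation of $\int (R'\psi)^{\sigma_n}\,dX$, not for the final UCP upgrade, so your last sentence misplaces its role. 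The paper also makes explicit the verification that $R'\psi\in\mathbbm{L}(\Phi)$ (resp.\ $\mathbbm{D}(\Phi)$), which you sweep into one clause; this is routine but worth spelling out since Theorem~5.14 in \cite{FonsecaMora:StochInteg} requires it.
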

\begin{proof}
Assume $R \in \mathbbm{L}(\mathcal{L}_{b}(\Phi',\Psi')) \cap \mathcal{P}_{loc}(\Phi',\Psi')$. We will show first that \eqref{eqIntegOfStoppedProcess} holds true. Given $\psi \in \Psi$, let $R'\psi=(R'\psi(t,\omega): t \geq 0, \omega \in \Omega)$ be defined by $R'\psi(t,\omega) \defeq R(t,\omega)'\psi \in \Phi$. We must prove that $R' \psi  \in \mathbbm{L}(\Phi) \cap \mathcal{P}_{loc}(\Phi)$. 

First, given a stopping time $\tau$ by Corollary \ref{coroEquivStrongIntegrands} we have $(R'\psi)^{\tau} \in b\mathcal{P}(\Phi)$. Hence by choosing a localizing sequence we can easily verify that  $R' \psi  \in \mathcal{P}_{loc}(\Phi)$. 

We must prove $R'\psi \in \mathbbm{L}(\Phi)$. Let $\omega \in \Omega$ such that the mapping $t \mapsto R(t,\omega) \in \mathcal{L}_{b}(\Phi',\Psi')$ is left-continuous. Let $\psi \in \Psi$ and $B \subseteq \Phi'$ bounded. Then, since $p_{\psi}(g)=\abs{\inner{g}{\psi}}$ is a continuous seminorm on $\Phi'$ and $q_{B}(\varphi)=\sup_{f \in B}\abs{\inner{f}{\varphi}}$ is a continuous seminorm on $\Psi$, by the left-continuity of $t \mapsto R(t,\omega) $ we have 
\begin{eqnarray*}
 \lim_{s \nearrow t} q_{B}(R'\psi(s,\omega)-R'\psi(t,\omega))
 & = & \lim_{s \nearrow t} \sup_{f \in B}\abs{\inner{f}{R'\psi(s,\omega)-R'\psi(t,\omega)}} \\
 & = &  \lim_{s \nearrow t} \sup_{f \in B}\abs{\inner{R(s,\omega)f -R(t,\omega) f}{\psi}} \\
 & = &  \lim_{s \nearrow t} \sup_{f \in B} p_{\psi} (R(s,\omega)f -R(t,\omega) f) =  0.
\end{eqnarray*} 
Moreover, since $\Psi$ is reflexive the collection $q_{B}$, for $B \subseteq \Phi'$ bounded, generates the topology on $\Psi$. Therefore we have shown that the mapping $t \mapsto R'\psi(t,\omega) \in \Psi$ is left-continuous. Thus we conclude  $R' \psi  \in \mathbbm{L}(\Phi)$. 

Now since we have $R' \psi  \in \mathbbm{L}(\Phi) \cap \mathcal{P}_{loc}(\Phi)$, by Theorem 5.14 in \cite{FonsecaMora:StochInteg}, and then by the definition of $R'\psi$ we have
\begin{eqnarray*}
\int_{0}^{t} \, (R'\psi)^{\sigma_{n}} \, dX 
& = & \inner{X_{0}}{R'\psi(0)} +\sum_{k=1}^{m_{n}} \, \inner{X_{\tau^{n}_{k+1} \wedge t}-X_{\tau^{n}_{k} \wedge t}}{R'\psi(\tau^{n}_{k}\wedge t)} \\
& = & \inner{R(0)(X_{0})}{\psi} +\sum_{k=1}^{m_{n}} \, \inner{R(\tau^{n}_{k}\wedge t) \left( X_{\tau^{n}_{k+1} \wedge t}-X_{\tau^{n}_{k} \wedge t} \right)}{\psi} \\
& = & \inner{R(0)(X_{0})  +\sum_{k=1}^{m_{n}} \, R(\tau^{n}_{k}\wedge t) \left( X_{\tau^{n}_{k+1} \wedge t}-X_{\tau^{n}_{k} \wedge t} \right)}{\psi}. 
\end{eqnarray*}

Now, observe that by \eqref{eqDefiSampledProcessLCS} first applied to $R$ and $\sigma_{n}$, then to $R'\psi$ and $\sigma_{n}$, we have
$$ (R^{\sigma_{n}})' \psi =   R(0)' \psi \mathbbm{1}_{\{0\}}+\sum_{k=1}^{m} R(\tau_{k})' \psi  \mathbbm{1}_{(\tau_{k},\tau_{k+1}]} = (R'\psi)^{\sigma_{n}}. $$

Then by \eqref{eqWeakStrongLocalBounded} we have 
\begin{equation}\label{eqStoppedWeakStrongCompati}
\inner{ \int_{0}^{t} \, R^{\sigma_{n}} \, dX}{\psi} = \int_{0}^{t} \, (R'\psi)^{\sigma_{n}} \, dX. 
\end{equation}

Hence, for every $\psi \in \Psi$, 
$$ \inner{ \int_{0}^{t} \, R^{\sigma_{n}} \, dX}{\psi} = \inner{R(0)(X_{0})  +\sum_{k=1}^{m_{n}} \, R(\tau^{n}_{k}\wedge t) \left( X_{\tau^{n}_{k+1} \wedge t}-X_{\tau^{n}_{k} \wedge t} \right)}{\psi}. $$

Therefore the left and right hand sides in 
\eqref{eqIntegOfStoppedProcess} are indistinguishable processes by Remark \ref{remarkEqualInSemimartingale}. This proves \eqref{eqIntegOfStoppedProcess}. 

Now we prove \eqref{eqUCPConvergRiemannRepre}. Given $\psi \in \Psi$, by \eqref{eqStoppedWeakStrongCompati}, the Riemann representation formula for the real-valued stochastic integral (Theorem 5.14 in \cite{FonsecaMora:StochInteg}) and  \eqref{eqWeakStrongLocalBounded},
we have that
$$ \inner{ \int_{0}^{t} \, R^{\sigma_{n}} \, dX}{\psi} = \int_{0}^{t} \, (R'\psi)^{\sigma_{n}} \, dX \overset{ucp}{\rightarrow}
\int \, R' \psi \, dX = \inner{\int \, R \, dX}{\psi}
$$
Therefore by Proposition 3.7 in \cite{FonsecaMora:UCPConvergence}  we conclude that \eqref{eqUCPConvergRiemannRepre} holds.

The case $R \in \mathbbm{D}(\mathcal{L}_{b}(\Phi',\Psi')) \cap \mathcal{P}_{loc}(\Phi',\Psi')$ can be obtained from the former case by replacing $R$ with $R_{-} \in \mathbbm{L}(\mathcal{L}_{b}(\Phi',\Psi')) \cap \mathcal{P}_{loc}(\Phi',\Psi')$. 
\end{proof}

\subsection{The integral as a $S^{0}$-good integrator}\label{sectIntegAsAGoodIntegrator}

In this section we explore sufficient conditions for the stochastic integral to be a $S^{0}$-good integrator. 
We start with the following result which shows that for some particular classes of semimartingales the stochastic integral is always a $S^{0}$-good integrator. 

\begin{proposition}\label{propParticuSemimartIntegralGoodItegrator}
Let $\Psi$ be a complete bornological barrelled nuclear space and let $\Phi$ be a complete barrelled nuclear space. Assume that the $\Phi'$-valued process $X=(X_{t}: t \geq 0)$  satisfies any of the following conditions:
\begin{enumerate}
\item $X$ is locally a $\mathcal{H}^{p}_{S}$-semimartingale.  
\item $X$ is locally a $\mathcal{M}_{\infty}^{2}$-martingale. 
\item $X$ is locally a $\mathcal{A}$-semimartingale.
\end{enumerate}
Then, for every $R \in  \mathcal{P}_{loc}(\Phi',\Psi')$ the stochastic integral $Y=\int \, R \, dX$ is a $S^{0}$-good integrator in $\Psi'$.
\end{proposition}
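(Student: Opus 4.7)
The plan is to show $Y = \int R\,dX$ is locally a semimartingale of the same class as $X$ in $\Psi'$ (locally $\mathcal{H}^p_S$, locally $\mathcal{M}^2_\infty$, or locally $\mathcal{A}$), and then invoke Corollary \ref{coroConcreteLocalGoodIntegrators} applied in $\Psi'$ (which applies since $\Psi$ is assumed to be complete bornological barrelled nuclear). Throughout I treat all three cases simultaneously, as only the permanence proposition invoked changes.

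First I would assemble a common localizing sequence. Let $(\tau_n)$ be a sequence with $\tau_n \nearrow \infty$ such that $X^{\tau_n}$ is a $\mathcal{H}^p_S$-semimartingale (resp.\ $\mathcal{M}^2_\infty$-martingale, resp.\ $\mathcal{A}$-semimartingale), and let $(\sigma_n)$ be a localizing sequence for $R$ from Definition \ref{defiLocallyBoundedIntegrands}. Set $\rho_n = \tau_n \wedge \sigma_n$. Then $\rho_n \nearrow \infty$ $\Prob$-a.e., the process $R \mathbbm{1}_{[0,\rho_n]}$ lies in $b\mathcal{P}(\Phi',\Psi')$ (it agrees with $R^{\rho_n}$ on $[0,\rho_n]$ and vanishes elsewhere), and $X^{\rho_n}$ is still a $\mathcal{H}^p_S$-semimartingale (resp.\ $\mathcal{M}^2_\infty$-martingale, resp.\ $\mathcal{A}$-semimartingale), since all three classes are closed under stopping at the real-valued level applied to each $\inner{X}{\phi}$.

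Second I would verify the continuity hypothesis of Proposition \ref{propGoodIntegratorsNuclearSpace} for $X^{\rho_n}$. Since $X$ is locally of the prescribed type, Corollary \ref{coroConcreteLocalGoodIntegrators} gives that $X$ is already a $S^{0}$-good integrator, so $\phi \mapsto X(\phi)$ is continuous $\Phi \to S^{0}$; then Proposition \ref{propGoodIntegraContPartStopping} yields the same continuity for $X^{\rho_n}$. Consequently Proposition \ref{propGoodIntegratorsNuclearSpace} applies to the integrator $X^{\rho_n}$ and the bounded-predictable integrand $R\mathbbm{1}_{[0,\rho_n]} \in b\mathcal{P}(\Phi',\Psi')$, telling us that $\int R\mathbbm{1}_{[0,\rho_n]}\,dX^{\rho_n}$ is a $\mathcal{H}^p_S$-semimartingale (resp.\ $\mathcal{M}^2_\infty$-martingale, resp.\ $\mathcal{A}$-semimartingale) in $\Psi'$, via the respective part (1), (2) or (3) of that proposition.

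Third, using Theorem \ref{theoStochIntegLocallyBounded}(2), I would identify the stopped integral:
\begin{equation*}
Y^{\rho_n} \;=\; \Bigl(\int R\,dX\Bigr)^{\rho_n} \;=\; \int R\mathbbm{1}_{[0,\rho_n]}\,dX \;=\; \int R\,dX^{\rho_n} \;=\; \int R\mathbbm{1}_{[0,\rho_n]}\,dX^{\rho_n},
\end{equation*}
where the last identity uses that $X^{\rho_n}$ is constant after $\rho_n$, so the two integrands give the same stochastic integral against $X^{\rho_n}$ (checked at the real-valued level via \eqref{eqWeakStrongLocalBounded}). Hence $Y^{\rho_n}$ belongs to the appropriate class in $\Psi'$, which means $Y$ is locally of this class. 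Applying Corollary \ref{coroConcreteLocalGoodIntegrators} in $\Psi'$ then yields that $Y$ is a $S^{0}$-good integrator.

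I expect the only subtlety to be the verification that Proposition \ref{propGoodIntegratorsNuclearSpace} can be legitimately invoked for the stopped integrator $X^{\rho_n}$: that requires both preservation of the integrator class under stopping (immediate from the definitions of $\mathcal{H}^p_S$, $\mathcal{M}^2_\infty$, and $\mathcal{A}$) and the continuity of $\phi \mapsto \inner{X^{\rho_n}}{\phi}$ from $\Phi$ into $S^{0}$, which is handled by combining Corollary \ref{coroConcreteLocalGoodIntegrators} (applied in $\Phi'$ to get continuity for $X$) with Proposition \ref{propGoodIntegraContPartStopping}. Everything else is a direct unwinding of the definitions and localization.
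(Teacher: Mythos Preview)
Your proposal is correct and follows essentially the same route as the paper: localize both $X$ and $R$, apply Proposition~\ref{propGoodIntegratorsNuclearSpace} to the stopped integrator and bounded integrand, identify the result with $Y^{\rho_n}$ via Theorem~\ref{theoStochIntegLocallyBounded}(2), and conclude by Corollary~\ref{coroConcreteLocalGoodIntegrators} in $\Psi'$. The paper's proof compresses all of this into two sentences, so your expanded version is in fact a faithful elaboration of it.

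One small caution: in your second step you invoke Corollary~\ref{coroConcreteLocalGoodIntegrators} in $\Phi'$ to deduce that $X$ itself is a $S^{0}$-good integrator, but that corollary requires $\Phi$ to be bornological, which the proposition only assumes for $\Psi$. The paper's own proof is silent on this point as well (it implicitly needs $X$ to be a good integrator for $Y=\int R\,dX$ to be defined via Theorem~\ref{theoStochIntegLocallyBounded}), so this appears to be an unstated hypothesis rather than a flaw in your argument.
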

\begin{proof}
By Proposition \ref{propGoodIntegratorsNuclearSpace} and Theorem \ref{theoStochIntegLocallyBounded}(2) if $X$ is locally a $\mathcal{H}^{p}_{S}$-semimartingale (respectively locally a $\mathcal{M}_{\infty}^{2}$-martingale or locally a $\mathcal{A}$-semimartingale) then the stochastic integral $Y$ is  locally a $\mathcal{H}^{p}_{S}$-semimartingale (respectively locally  a $\mathcal{M}_{\infty}^{2}$-martingale or locally a $\mathcal{A}$-semimartingale). Hence, by Corollary \ref{coroConcreteLocalGoodIntegrators} we have $Y$ is a $S^{0}$-good integrator. 
\end{proof}

Let $\Psi$ and $\Phi$ as in Proposition \ref{propParticuSemimartIntegralGoodItegrator}. 
In the case that we assume only that $X$ is a $S^{0}$-good integrator, it is not clear that the stochastic integral $Y=\int \, R \, dX$ is a $S^{0}$-good integrator in $\Psi'$ for each $R \in  \mathcal{P}_{loc}(\Phi',\Psi')$. We will show however that if we additionally assume that $\Psi$ has the bounded approximation property then the stochastic integral is always a $S^{0}$-good integrator (see Theorem \ref{theoIntegralGoodIntegrator}). 

Recall that $\Psi$ has the \emph{bounded approximation property} if there exists an equicontinuous net $(A_{\gamma}: \gamma \in \Gamma) \subseteq \mathcal{L}(\Psi, \Psi)$ with $\mbox{dim}(A_{\gamma}(\Psi))< \infty$ for every $\gamma \in \Gamma$ and $\lim_{\gamma \in \Gamma} A_{\gamma}(\psi)=\psi$ for every $\psi \in \Psi$. In other words, the net $(A_{\gamma}: \gamma \in \Gamma)$ converges to the identity in the topology of simple convergence. Being $\Psi$ a Montel space and as a consequence of  the Banach-Steinhaus theorem (e.g. Theorem 11.9.4 in \cite{NariciBeckenstein}), the convergence also occurs in the topology of bounded convergence. Therefore, for any given $R \in \mathcal{L}(\Psi,b\mathcal{P}(\Phi))$ the equicontinuous sequence $(R\circ A_{\gamma}: \gamma \in \Gamma) \subseteq  \mathcal{L}(\Psi,b\mathcal{P}(\Phi))$ of finite rank operators converges to $R$ in the topology of bounded convergence. Since $\mathcal{L}(\Psi,b\mathcal{P}(\Phi))\simeq b\mathcal{P}(\Phi',\Psi')$ (Corollary \ref{coroEquivStrongIntegrands}), we obtain the following result:

\begin{lemma}\label{lemmaEquiNetSimpleBoundApproxPrope}
Let $\Psi$ be a quasi-complete bornological nuclear space with the bounded approximation property and let $\Phi$ be a complete barrelled nuclear space. For any given $R \in b\mathcal{P}(\Phi',\Psi')$ there exists an equicontinuous sequence of elementary integrands $(R_{\gamma}: \gamma \in \Gamma)$ converging to $R$ in the topology of bounded convergence.  
\end{lemma}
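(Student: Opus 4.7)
My plan is to execute the outline already sketched in the paragraph immediately preceding the lemma, filling in the two non-routine justifications. Starting from the bounded approximation property, I fix an equicontinuous net $(A_\gamma)_{\gamma \in \Gamma} \subseteq \mathcal{L}(\Psi,\Psi)$ of finite rank operators with $A_\gamma \psi \to \psi$ for every $\psi \in \Psi$. Given $R \in b\mathcal{P}(\Phi',\Psi')$, I use Corollary \ref{coroEquivStrongIntegrands} to identify $R$ with the operator $\widetilde{R} \in \mathcal{L}(\Psi, b\mathcal{P}(\Phi))$ defined by $\widetilde{R}(\psi) = R'\psi$, and set $\widetilde{R}_\gamma \defeq \widetilde{R} \circ A_\gamma$. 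Each $\widetilde{R}_\gamma$ is continuous with finite dimensional range, and the family $(\widetilde{R}_\gamma)$ is equicontinuous, being the composition of the equicontinuous family $(A_\gamma)$ with the single continuous operator $\widetilde{R}$.

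The crucial step is to upgrade the pointwise convergence $A_\gamma \to \mathrm{id}_\Psi$ to convergence in $\mathcal{L}_b(\Psi,\Psi)$. Since $\Psi$ is quasi-complete and bornological it is ultrabornological, hence barrelled (NB Thms.~13.2.12 and 11.12.2); being also nuclear it is semi-Montel, so $\Psi$ is Montel and every bounded $B \subseteq \Psi$ is relatively compact. By the Banach--Steinhaus theorem (NB Thm.~11.9.4), any equicontinuous pointwise-convergent net in $\mathcal{L}(\Psi,\Psi)$ converges uniformly on precompact subsets, so $A_\gamma \to \mathrm{id}_\Psi$ in $\mathcal{L}_b(\Psi,\Psi)$. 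Left composition $S \mapsto \widetilde{R} \circ S$ from $\mathcal{L}_b(\Psi,\Psi)$ into $\mathcal{L}_b(\Psi, b\mathcal{P}(\Phi))$ is continuous, since for any bounded $B \subseteq \Psi$ and continuous seminorm $p$ on $b\mathcal{P}(\Phi)$ the continuity of $\widetilde{R}$ provides a continuous seminorm $q$ on $\Psi$ with $p(\widetilde{R}\phi) \leq q(\phi)$, hence $\sup_{\psi \in B} p(\widetilde{R}(S\psi - T\psi)) \leq \sup_{\psi \in B} q(S\psi - T\psi)$. Consequently $\widetilde{R}_\gamma \to \widetilde{R}$ in $\mathcal{L}_b(\Psi, b\mathcal{P}(\Phi))$.

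Transporting the net back through the isomorphism of Corollary \ref{coroEquivStrongIntegrands} produces the desired net $(R_\gamma) \subseteq b\mathcal{P}(\Phi',\Psi')$, equicontinuous and convergent to $R$ in the topology of bounded convergence. That each $R_\gamma$ is elementary is visible explicitly: writing $A_\gamma(\psi) = \sum_{i=1}^{n_\gamma} \langle \psi_i^*, \psi \rangle \psi_i$ with $\psi_i^* \in \Psi'$ and $\psi_i \in \Psi$, one computes for $f \in \Phi'$, $t \geq 0$, $\omega \in \Omega$,
\[
R_\gamma(t,\omega) f = \sum_{i=1}^{n_\gamma} \langle f, \widetilde{R}(\psi_i)(t,\omega) \rangle \, \psi_i^{*},
\]
so that $R_\gamma$ factors through a finite dimensional subspace of $\Psi'$ and is a finite sum of tensor products of elements of $b\mathcal{P}(\Phi)$ with vectors in $\Psi'$.

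The only genuinely delicate point is the Banach--Steinhaus upgrade, which hinges on the Montel property of $\Psi$; the remainder is a bookkeeping exercise through the identifications of Corollary \ref{coroEquivStrongIntegrands} together with the preservation of equicontinuity under composition with a fixed continuous operator.
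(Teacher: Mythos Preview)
Your proof is correct and follows essentially the same approach as the paper: the paper's proof is the paragraph immediately preceding the lemma, and you have faithfully expanded it, supplying the Montel/Banach--Steinhaus upgrade from pointwise to bounded convergence and pushing everything through the isomorphism of Corollary~\ref{coroEquivStrongIntegrands}. The only cosmetic difference is that the paper later (in the proof of Theorem~\ref{theoIntegralGoodIntegrator}) writes the elementary integrands in the form $\sum_k h_k A_k$ with $h_k \in b\mathcal{P}$ and $A_k \in \mathcal{L}(\Phi',\Psi')$, whereas your explicit representation gives $R_\gamma$ as a finite sum $\sum_i H_i \otimes \psi_i^*$ with $H_i \in b\mathcal{P}(\Phi)$; both correspond to finite-rank operators in $\mathcal{L}(\Psi,b\mathcal{P}(\Phi))$ and serve the same purpose.
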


\begin{remark}\label{remaBoundedApproxima}
It is well-known that because $\Psi$ is barrelled, a sufficient condition for it to possess the bounded approximation property is the existence of a Schauder basis. In effect, let $(\varphi_{n}: n \in \N) \subseteq \Psi$ be a Schauder basis with coefficient functionals $(g_{n}: n \in \N) \subseteq \Psi'$; that is $\inner{g_{m}}{\varphi_{n}}=\delta_{m,n}$ and $\psi=\sum_{n=1}^{\infty} g_{n}(\psi)\varphi_{n}$ converges in $\Psi$ for each $\psi \in \Psi$. Denote by $P_{n}:\Psi \rightarrow \Psi$ the mapping $P_{n}(\psi)\defeq   \sum_{k=1}^{n} g_{k}(\psi)\varphi_{k}$, which is a continuous projection onto $\mbox{span}\{\varphi_{1}, \dots, \varphi_{n}\}$. By definition, $P_{n}(\psi) \rightarrow \psi$ as $n \rightarrow \infty$ and because $\Psi$ is barrelled, by the Banach-Steinhauss theorem the family $(P_{n}: n \in \N)$ is equicontinuous. Thus, $\Psi$ has the bounded approximation property. For example,   $\mathcal{C}^{\infty}([-1,1])$ possesses an Schauder basis (see \cite{Jarchow}, Section 14.8), hence have the bounded approximation property.
\end{remark}

We return to our study of the  $S^{0}$-good integrator property for the stochastic integral. 
Observe that under the assumptions in Theorem \ref{theoStochIntegLocallyBounded} the stochastic integral $Y= \int \, R \, dX $ is a  $\Psi'$-valued regular c\`{a}dl\`{a}g $(\mathcal{F}_{t})$-semimartingale. Since $\Psi$ is assumed to be ultrabornological (hence is barrelled), by Theorem 2.10 in \cite{FonsecaMora:Existence} and  Proposition 3.15 in \cite{FonsecaMora:Semi} the mapping 
 $\psi \mapsto Y(\psi)=\inner{\int \, R \, dX}{\psi} $ is continuous from $\Psi$ into $S^{0}$. Then, in order to show $Y$ is a $S^{0}$-good integrator it only remains to show that the mapping $H \mapsto \int \, H \, dY$ is continuous from $b\mathcal{P}(\Psi)$ into $S^{0}$. 
  
\begin{theorem}\label{theoIntegralGoodIntegrator}
Let $\Psi$ be a quasi-complete bornological nuclear space with the bounded approximation property and let $\Phi$ be a complete barrelled nuclear space. Let  $X=(X_{t}: t \geq 0)$ be a $S^{0}$-good integrator in $\Phi'$ and let $R \in  \mathcal{P}_{loc}(\Phi',\Psi')$. 
Then $Y=\int \, R \, dX$ is a $S^{0}$-good integrator in $\Psi'$. Moreover, for every $H \in \mathcal{P}_{loc}(\Psi)$ we have 
\begin{equation}\label{eqAssociaStochIntegral}
 \int \, H \, dY = \int \, R'(H) \, dX,
\end{equation}
where $R'(H)(t,\omega)=R'(t,\omega)H(t,\omega)$ for all $t \geq 0$, $\omega \in \Omega$.   
\end{theorem}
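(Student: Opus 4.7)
The plan is to reduce everything to the single associativity identity $\int H\, dY = \int R'(H)\, dX$: once this is established, the continuity of its right-hand side in $H$ from $b\mathcal{P}(\Psi)$ into $S^{0}$ (obtained by composing the maps $H\mapsto R'(H)$ and $K\mapsto \int K\, dX$, the latter being continuous because $X$ is a $S^{0}$-good integrator) transfers to the left-hand side and forces $Y$ to be a $S^{0}$-good integrator, the continuity of $\psi\mapsto \inner{Y}{\psi}$ from $\Psi$ into $S^{0}$ being already recorded in the discussion preceding the statement. I will treat first the bounded case $R \in b\mathcal{P}(\Phi',\Psi')$ and $H \in b\mathcal{P}(\Psi)$, and localize afterwards. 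As preparation, one checks that $H \mapsto R'(H)$ is well-defined and continuous from $b\mathcal{P}(\Psi)$ into $b\mathcal{P}(\Phi)$: weak boundedness of $R'(H)$ follows from the weak boundedness of $R$ applied to the bounded set $H(\R_{+}\times\Omega)\subseteq \Psi$, continuity is a consequence of Corollary \ref{coroEquivStrongIntegrands}, which presents $R'$ as a continuous operator $\tilde R\colon \Psi \to b\mathcal{P}(\Phi)$ and yields uniform estimates $\sup_{(t,\omega)}p(R'(t,\omega)\psi) \le Cq(\psi)$ that transfer to the diagonal evaluation, and weak predictability is obtained by approximating $H$ by elementary processes and passing to pointwise limits.

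The identity is then verified for elementary $H = \sum_{j} h_{j}\psi_{j}$ by direct computation: property \eqref{eqConstrStrongStocInteg} gives $\int H\, dY = \sum_{j} h_{j}\cdot \int R'\psi_{j}\, dX$, and the auxiliary associativity $h \cdot \int K\, dX = \int hK\, dX$ (straightforward for elementary $K \in b\mathcal{P}(\Phi)$ from (SI1), extending to general $K$ because $X$ is a good integrator) rearranges this sum into $\int R'(H)\, dX$. To propagate the identity to arbitrary $H \in b\mathcal{P}(\Psi)$, the bounded approximation property of $\Psi$ is used decisively: the equicontinuous net $(A_{\gamma})$ of finite-rank operators on $\Psi$ converges to the identity in the topology of bounded convergence (the Montel property of $\Psi$ is essential here), so the elementary processes $A_{\gamma}\circ H$ tend to $H$ in $b\mathcal{P}(\Psi)$. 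Both sides of the identity being continuous in $H$ (the left into $(S^{0})_{lcx}$ by property \ref{continuityStochIntegConvexif}, the right into $S^{0}$ by the preparation), agreement on the dense set of elementary processes forces agreement on all of $b\mathcal{P}(\Psi)$, and $S^{0}$-continuity then passes to the left-hand side.

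For $R \in \mathcal{P}_{loc}(\Phi',\Psi')$ with localizing sequence $(\tau_{n})$, Theorem \ref{theoStochIntegLocallyBounded}(2) identifies $Y^{\tau_{n}}$ with $\int R\mathbbm{1}_{[0,\tau_{n}]}\, dX$, and the bounded case shows each $Y^{\tau_{n}}$ is a $S^{0}$-good integrator; Theorem \ref{theoLocalGoodIntegrator} then yields the good integrator property for $Y$, and the formula for $H \in \mathcal{P}_{loc}(\Psi)$ follows by stopping at a common localizing sequence for $H$ and $R$ and invoking the bounded-case identity on the stopped processes. The hardest point is the density step in the second paragraph: absent the bounded approximation property, elementary $\Psi$-valued processes are not known to be dense in $b\mathcal{P}(\Psi)$, and the explicit elementary computation that drives the entire proof could not be leveraged to produce the associativity identity.
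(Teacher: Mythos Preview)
Your proof is correct and reaches the same conclusion, but the route differs from the paper's in one structural way. You establish continuity of $H \mapsto \int R'(H)\,dX$ \emph{directly}: using Corollary \ref{coroEquivStrongIntegrands} to extract the uniform estimate $\sup_{(t,\omega)} p(R'(t,\omega)\psi) \le q(\psi)$, you deduce continuity of $H \mapsto R'(H)$ from $b\mathcal{P}(\Psi)$ into $b\mathcal{P}(\Phi)$ and then compose with the good-integrator property of $X$. The paper instead deploys the bounded approximation property on the \emph{integrand} $R$: via Lemma \ref{lemmaEquiNetSimpleBoundApproxPrope} it produces an equicontinuous net of finite-rank integrands $R_\gamma \to R$, verifies that each $I_\gamma(H) = \int R'_\gamma(H)\,dX$ is continuous, and recovers continuity of $\hat I(H) = \int R'(H)\,dX$ as the pointwise limit of this equicontinuous net. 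Your argument is more economical on this point.

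Where you and the paper differ in the use of the bounded approximation hypothesis is instructive. You invoke it to manufacture elementary approximants $A_\gamma \circ H$ of a given $H \in b\mathcal{P}(\Psi)$, both to obtain density and to certify weak predictability of $R'(H)$. The paper, by contrast, already records density of elementary processes in $b\mathcal{P}(\Psi)$ from Corollary 4.9 of \cite{FonsecaMora:StochInteg} (stated independently of any approximation hypothesis), and reserves the bounded approximation property solely for approximating $R$. So your closing remark that the density step is ``the hardest point'' requiring the bounded approximation property is slightly off target relative to the paper's logic: there, density is available separately, and the hypothesis is used on the integrand side. Both routes are valid and share the same final step---extending the elementary identity to all of $b\mathcal{P}(\Psi)$ via density, using that the left side is continuous into $(S^{0})_{lcx}$ and the right into $S^{0}$---after which the localization argument you give is exactly the paper's.
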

\begin{proof}
First, since the property of being a $S^{0}$-good integrator is preserved under localization (Corollary \ref{coroLocalGoodIntegrator}) and by the compatibility of the stochastic integral with stopping times (Theorem \ref{theoStochIntegLocallyBounded}), it suffices to show that the theorem holds true  under the assumption that $R \in  b\mathcal{P}(\Phi',\Psi')$ and $H \in b\mathcal{P}(\Psi)$. 

We start by showing that \eqref{eqAssociaStochIntegral} holds true for every elementary $H \in b\mathcal{P}(\Psi)$ and any $R \in  b\mathcal{P}(\Phi',\Psi')$. In effect, let $H$ be of the form  
$$H(t,\omega)=\sum_{k=1}^{m} h_{k}(t,\omega) \psi_{k},$$
where for $k=1, \cdots, m$ we have $h_{k} \in b\mathcal{P}$ and $\psi_{k} \in \Psi$. 
By \eqref{eqActionWeakIntegSimpleIntegNuclear}, \eqref{eqConstrStrongStocInteg} and Proposition 4.16 in \cite{FonsecaMora:StochInteg} we have
\begin{eqnarray*}
  \int \, H \, dY & = &   \sum_{k=1}^{n} \, h_{k} \cdot \inner{Y}{\phi_{k}} = \sum_{k=1}^{n} \, h_{k} \cdot \inner{\int \, R \, dX }{\psi_{k}}  \\
& = & \sum_{k=1}^{n} \, h_{k} \cdot \int \, R' \psi_{k} \, dX 
= \sum_{k=1}^{n}  \int \, h_{k}  R' \psi_{k} \, dX = \int \, R'(H) \, dX. 
\end{eqnarray*}  
This proves \eqref{eqAssociaStochIntegral} for elementary $H$. Our objective is then to show $Y$ is a $S^{0}$-good integrator. That is, we must prove that the stochastic integral mapping $J:b\mathcal{P}(\Psi)\rightarrow S^{0}$, $J(H)=\int \, H \, dY$ is continuous. 

To show $J$ is continuous, it suffices to prove the existence of a continuous and linear operator $I:b\mathcal{P}(\Psi)\rightarrow S^{0}$ that coincides with $J$ on a dense subset of $b\mathcal{P}(\Psi)$. Let $(R_{\gamma}: \gamma \in \Gamma)$ be a net of elementary integrands converging to $R$ as in Lemma \ref{lemmaEquiNetSimpleBoundApproxPrope}. For every $\gamma \in \Gamma$ let $I_{\gamma}:b\mathcal{P}(\Psi)\rightarrow S^{0} $ given by $I_{\gamma}(H)=\int \, R_{\gamma}'(H) \, dX$. We must show $I_{\gamma}\in \mathcal{L}(b\mathcal{P}(\Psi),S^{0}) $. 

In effect, assume for the moment that $R_{\gamma} = \sum_{k=1}^{m} h_{k} A_{k}$ where $m \in \N$, $h_{k} \in b\mathcal{P}$ and $A_{k} \in \mathcal{L}(\Phi',\Psi')$ for $k=1, \cdots, m$ (for simplicity we omit the dependence on $\gamma$). Then, if $H_{\alpha} \rightarrow H$ in $b\mathcal{P}(\Psi)$ and if $p$ is a continuous seminorm on $\Phi$, we have 
\begin{eqnarray*}
\sup_{(t,\omega)} p \left( R'_{\gamma}(H_{\alpha}(t,\omega)-H(t,\omega))\right) 
& \leq  & \sup_{(t,\omega)} \sum_{k=1}^{m} \abs{h_{k}(t,\omega)} p \left( A'_{k} (H_{\alpha}(t,\omega)-H(t,\omega))\right) \\
 & \leq  &  \sum_{k=1}^{m}  \sup_{(t,\omega)}\abs{h_{k}(t,\omega)} \tilde{p}_{k} (H_{\alpha}(t,\omega)-H(t,\omega))
\rightarrow 0, 
\end{eqnarray*}
where in the above we have used the fact that each $\tilde{p}_{k}(\cdot)=p\left( A'_{k} \cdot \right)$ is a continuous seminorm on $\Psi$. Since $X$ is a $S^{0}$-good integrator we have 
$$ I_{\gamma}(H_{\alpha})=\int \, R'_{\gamma}(H_{\alpha}) \, dX 
\overset{S^{0}}{\rightarrow}  \int \, R'_{\gamma}(H) \, dX = I_{\gamma}(H). $$
Therefore $I_{\gamma} \in \mathcal{L}(b\mathcal{P}(\Psi),S^{0}) $. 


To prove the existence of an operator $I$ as described above we will need to show that $(I_{\gamma}: \gamma \in \Gamma)$  is an equicontinuous subset of $\mathcal{L}(b\mathcal{P}(\Psi),S^{0})$  and that $I_{\gamma}(H) \rightarrow J(H)$ in $S^{0}$ for each elementary $H \in b\mathcal{P}(\Psi) $. 

To prove $(I_{\gamma}: \gamma \in \Gamma)$  is equicontinuous notice that for every $\gamma \in \Gamma$ we have $I_{\gamma}=K \circ \widehat{R}'_{\gamma}$, where $\widehat{R}'_{\gamma} \in \mathcal{L}(b\mathcal{P}(\Psi),b\mathcal{P}(\Phi))$ is the mapping $H \mapsto R'(H)$  and $K: b\mathcal{P}(\Psi) \rightarrow S^{0}$ is the stochastic integral mapping of $X$, i.e. $K(H)=\int \, H \, dX$ for every $H \in b\mathcal{P}(\Psi)$. 

The family $(\widehat{R}'_{\gamma}:\gamma \in \Gamma)$ is equicontinuous. To see why this is true, observe that since the family $(R'_{\gamma}:\gamma \in \Gamma)$ is equicontinuous (Lemma \ref{lemmaEquiNetSimpleBoundApproxPrope}), given a continuous seminorm $p$ on $\Phi$ there exists a continuous seminorm $q$ on $\Psi$ such that 
$$ \sup_{(t,\omega)} p(R'_{\gamma}(t,\omega)\psi) \leq q(\psi), \quad \forall \psi \in \Psi, \, \gamma \in \Gamma. $$  
From the above we clearly have 
$$ \sup_{(t,\omega)} p(R'_{\gamma}(t,\omega)H(t,\omega)) \leq q(H(t,\omega)), \quad \forall H \in b\mathcal{P}(\Psi), \, \gamma \in \Gamma. $$  
which shows that $(\widehat{R}'_{\gamma}:\gamma \in \Gamma)$ is equicontinuous in $\mathcal{L}(b\mathcal{P}(\Psi),b\mathcal{P}(\Phi))$. 
Furthermore, since $K$ is continuous (recall $X$ is a $S^{0}$-good integrator), we conclude that the family $(I_{\gamma}: \gamma \in \Gamma)$  is equicontinuous. 

Now, given $H \in b\mathcal{P}(\Psi)$  there exists a bounded subset $B \subseteq \Psi$ such that $H(t,\omega) \in B$ $\forall t \geq 0$, $\omega \in \Omega$. Since $R_{\gamma} \rightarrow R$ under the topology of bounded convergence (Lemma \ref{lemmaEquiNetSimpleBoundApproxPrope}), for any continuous seminorm $p$ on $\Phi$ we have
\begin{multline*}
\sup_{(t,\omega)} p \left( (R'_{\gamma} (t,\omega)-R'(t,\omega) )H(t,\omega) \right) 
\leq \sup_{\psi \in B} \sup_{(t,\omega)} p \left( R'_{\gamma} (t,\omega)\psi -R'(t,\omega) \psi \right) \rightarrow 0.  
\end{multline*} 
That is $R_{\gamma}'(H) \rightarrow R'(H)$ in $b\mathcal{P}(\Phi)$. Since $X$ is a $S^{0}$-good integrator we have $I_{\gamma}(H) \rightarrow \widehat{I}(H)$ in $S^{0}$  for $\widehat{I}(H)= \int \, R'(H) \, dX$. 
Recall that for elementary $H \in b\mathcal{P}(\Psi)$ we have already proved that $\widehat{I}(H)=J(H)$ (this is \eqref{eqAssociaStochIntegral}), thus $I_{\gamma}(H) \rightarrow J(H)$. Because the $\Phi$-valued elementary processes are dense in $ b\mathcal{P}(\Psi)$, by Corollary 1 in Section 34.3 in \cite{Treves} there exists  $I\in \mathcal{L}(b\mathcal{P}(\Psi),S^{0})$ that coincides with $J$ for every $\Phi$-valued elementary processes and moreover $I_{\gamma}$ converges to $I$ for the topology of compact convergence (which is equivalent to the topology of bounded convergence because the space $\Psi$ is Montel). This proves $Y$ is a $S^{0}$-good integrator. 

Finally,  since both $Y$ and $X$ are $S^{0}$-good integrators in $\Phi'$ and the collection of all the elementary processes is dense in $ b\mathcal{P}(\Psi)$, then \eqref{eqAssociaStochIntegral} extends to every $H \in b\mathcal{P}(\Psi)$ by continuity. 
\end{proof}

It is known (see \cite{Jarchow}, Section 14.8, Example 5(d), p.319) that $\mathscr{S}(\R)$ possesses a Schauder basis, hence have the bounded approximation property (Remark \ref{remaBoundedApproxima}). Then, by Theorem \ref{theoIntegralGoodIntegrator} we obtain the following useful conclusion:

\begin{corollary}\label{coroStocIntegTemperedIsGoodIntegrator}
Let  $X=(X_{t}: t \geq 0)$ be a $S^{0}$-good integrator in $\mathscr{S}'(\R)$ and $R \in  \mathcal{P}_{loc}(\mathscr{S}'(\R),\mathscr{S}'(\R))$. Then $Y=\int \, R \, dX$ is a $S^{0}$-good integrator in $\mathscr{S}'(\R)$.
\end{corollary}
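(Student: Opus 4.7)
The plan is to obtain this corollary as an immediate application of Theorem \ref{theoIntegralGoodIntegrator} with the single choice $\Phi=\Psi=\mathscr{S}(\R)$. Accordingly, the only work required is to verify that $\mathscr{S}(\R)$ simultaneously satisfies the hypotheses imposed there on $\Phi$ (complete barrelled nuclear) and on $\Psi$ (quasi-complete bornological nuclear with the bounded approximation property).

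First I would recall that $\mathscr{S}(\R)$ is a nuclear Fr\'echet space, as stated in Section \ref{sectPreliminaries}. Being Fr\'echet it is complete, metrizable, bornological and barrelled, and in particular quasi-complete; thus it qualifies simultaneously as a ``complete barrelled nuclear space'' and as a ``quasi-complete bornological nuclear space''. Next, I would invoke the fact cited just before the statement (see \cite{Jarchow}, Section 14.8, Example 5(d)) that $\mathscr{S}(\R)$ admits a Schauder basis. By Remark \ref{remaBoundedApproxima} this implies that $\mathscr{S}(\R)$ possesses the bounded approximation property, which is the remaining hypothesis needed for Theorem \ref{theoIntegralGoodIntegrator}.

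With both sides of the hypotheses on $(\Phi,\Psi)$ verified for $\mathscr{S}(\R)$, the conclusion of Theorem \ref{theoIntegralGoodIntegrator} applies verbatim: for every $S^{0}$-good integrator $X$ in $\mathscr{S}'(\R)$ and every $R \in \mathcal{P}_{loc}(\mathscr{S}'(\R),\mathscr{S}'(\R))$, the stochastic integral $Y=\int R\,dX$ is again a $S^{0}$-good integrator in $\mathscr{S}'(\R)$, which is exactly the assertion of Corollary \ref{coroStocIntegTemperedIsGoodIntegrator}. There is no real obstacle here: the substantive work (the equicontinuous approximation by elementary integrands and the transfer to the integrator $Y$) has already been carried out in Theorem \ref{theoIntegralGoodIntegrator}, and the only point to check is that $\mathscr{S}(\R)$ lies in the intersection of the two classes of nuclear spaces considered, which it does trivially.
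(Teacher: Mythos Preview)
Your proposal is correct and follows exactly the paper's approach: the paper observes just before the corollary that $\mathscr{S}(\R)$ has a Schauder basis (citing \cite{Jarchow}, Section 14.8), hence the bounded approximation property by Remark \ref{remaBoundedApproxima}, and then invokes Theorem \ref{theoIntegralGoodIntegrator}. Your additional verification that $\mathscr{S}(\R)$, as a nuclear Fr\'echet space, satisfies the remaining hypotheses on both $\Phi$ and $\Psi$ is a harmless (and arguably helpful) elaboration of what the paper leaves implicit.
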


%


\section{It\^{o}'s formula in the space of distributions}\label{sectItoFormula}

In \cite{Ustunel:1982Ito}, A. S.  \"{U}st\"{u}nel introduced a generalization of It\^{o}'s formula to the space of (tempered) distributions. This is carried out by replacing functions of a given semimartingale by  the convolution of a distribution with the Dirac measure corresponding to the point in space at which evaluation of the semimartingale is desired.

To be precise, recall that if $T$ is a distribution and $z=(z_{t}: t \geq 0)$ is an $\R^{d}$-valued semimartingale, then for every test function $\phi$ we have (see \cite{Treves}, Proposition 27.6, p.296):
$$ \inner{T \ast \delta_{z_{t}}}{\phi} = T(\phi(\cdot +z_{t})).$$
Thus, in \cite{Ustunel:1982Ito} an It\^{o}'s formula is calculated for $T \ast \delta_{z_{t}}$.  

In this section we obtain another proof of \"{U}st\"{u}nel's version of It\^{o}'s formula from \cite{Ustunel:1982Ito} by applying the $S^{0}$-good integrator property.  

\begin{theorem}\label{theoItoFormula}
Let $z=(z_{t}: t \geq 0)$ be a $\R^{d}$-valued semimartingale. For every $T \in \mathscr{S}'(\R^{d})$ we have:
\begin{enumerate}
\item \label{itemConvDelta} $T \ast \delta_{z_{t}}$, $\nabla T \ast \delta_{z_{t}}$ and $\Delta T \ast \delta_{z_{t}}$ are all $S^{0}$-good integrators in $\mathscr{S}'(\R^{d})$. 
\item \label{itemStocIntegItoForm} The processes 
\begin{align*}
A_{t} &=  \int_{0}^{t} \,  \nabla T \ast \delta_{z_{s-}}  \, dz_{s},\\
B_{t} &=  \int_{0}^{t} \, \Delta T \ast \delta_{z_{s-}} \, d \inner{z^{c}}{z^{c}}_{s},\\
C_{t} &= \sum_{s \leq t} \left[ T \ast \delta_{z_{s}} - T \ast \delta_{z_{s-}} + \nabla T \ast \delta_{z_{s-}} \Delta z_{s} \right],
\end{align*}
are all $\mathscr{S}'(\R^{d})$-valued regular c\`{a}dl\`{a}g $(\mathcal{F}_{t})$-semimartingales. In particular,  $(C_{t}: t \geq 0) $ is a process of finite variation.
\item \label{itemItoForm} We have the following relation with equality in $S^{0}(\mathscr{S}'(\R^{d}))$:
\begin{eqnarray}
T \ast \delta_{z_{t}} 
& = & T \ast \delta_{z_{0}} -  \int_{0}^{t} \,  \nabla T \ast \delta_{z_{s-}}  \, dz_{s} + \frac{1}{2} \int_{0}^{t} \, \Delta T \ast \delta_{z_{s-}} \, d \llangle  z^{c}, z^{c} \rrangle _{s} \nonumber \\
& {} &  + \sum_{s \leq t} \left[ T \ast \delta_{z_{s}} - T \ast \delta_{z_{s-}} + \nabla T \ast \delta_{z_{s-}} \Delta z_{s} \right] \label{eqItoFormulaSpaceDistributions}.
\end{eqnarray}
\end{enumerate}
Moreover, the above conclusions remain valid if we replace the space $\mathscr{S}'(\R^{d})$ with the space $\mathscr{D}'(\R^{d})$. 
\end{theorem}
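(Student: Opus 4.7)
First I would establish \ref{itemConvDelta}. From Example \ref{examDiracGoodIntegrator}, the process $(\delta_{z_t}: t\geq 0)$ is a $S^{0}$-good integrator in $\mathscr{E}'(\R^{d})$. For any fixed $T\in\mathscr{S}'(\R^{d})$, the convolution operator $\Lambda_{T}\colon\mathscr{E}'(\R^{d})\to\mathscr{S}'(\R^{d})$, $S\mapsto T\ast S$, is linear and continuous by standard properties of convolution between tempered distributions and compactly supported distributions. Proposition \ref{propContinuousImageGoodIntegrator} applied to $X=(\delta_{z_{t}})$ and $A=\Lambda_{T}$ then shows that $(T\ast\delta_{z_{t}})=\Lambda_{T}(\delta_{z_{t}})$ is a $S^{0}$-good integrator in $\mathscr{S}'(\R^{d})$. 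The same argument with $T$ replaced by $\partial_{i}T$ and $\Delta T$ delivers the analogous conclusion for $\nabla T\ast\delta_{z_{t}}$ and $\Delta T\ast\delta_{z_{t}}$.

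Next I would address the existence of $A_{t}$ and $B_{t}$ in \ref{itemStocIntegItoForm} by writing $z=(z^{1},\dots,z^{d})$ and interpreting the notation componentwise, so that
\[ A_{t}=\sum_{i=1}^{d}\int_{0}^{t}\partial_{i}T\ast\delta_{z_{s-}}\,dz^{i}_{s}, \qquad B_{t}=\sum_{i,j=1}^{d}\int_{0}^{t}\partial_{i}\partial_{j}T\ast\delta_{z_{s-}}\,d\llangle z^{c,i},z^{c,j}\rrangle_{s}. \]
Each process $\partial_{i}T\ast\delta_{z}$ and $\partial_{i}\partial_{j}T\ast\delta_{z}$ is a $\mathscr{S}'(\R^{d})$-valued regular càdlàg process by Step 1, so its left-limit version lies in $\mathcal{P}_{loc}(\mathscr{S}'(\R^{d}))$ (the argument of Example 5.5 in \cite{FonsecaMora:StochInteg} transfers). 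Since each $z^{i}$ is a real semimartingale and each $\llangle z^{c,i},z^{c,j}\rrangle$ is a continuous finite variation process (hence trivially a $S^{0}$-good integrator), Example \ref{examVectorStochIntegRealValueSemimar} produces each summand as a $\mathscr{S}'(\R^{d})$-valued regular càdlàg semimartingale; the $B_{t}$ summands are of finite variation by Proposition \ref{propGoodIntegratorsNuclearSpace}\ref{integralIntegrableVariation}.

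The heart of the proof is a $\phi$-by-$\phi$ reduction to the classical Itô formula. Fix $\phi\in\mathscr{S}(\R^{d})$ and set $F_{\phi}(x)\defeq T(\phi(\cdot+x))=\inner{T\ast\delta_{x}}{\phi}$, which is $C^{\infty}$ on $\R^{d}$. Integration by parts yields $\partial_{i}F_{\phi}(x)=-\inner{\partial_{i}T\ast\delta_{x}}{\phi}$ (the minus sign accounting for the sign flip in the final formula) and $\partial_{i}\partial_{j}F_{\phi}(x)=\inner{\partial_{i}\partial_{j}T\ast\delta_{x}}{\phi}$. The classical Itô formula applied to the real-valued semimartingale $(F_{\phi}(z_{t}))$, combined with \eqref{eqWeakStrongCompaIntegralRealSemima} of Example \ref{examVectorStochIntegRealValueSemimar} to rewrite each real stochastic integral as the pairing with $\phi$ of its vector-valued counterpart, produces the pairing of \eqref{eqItoFormulaSpaceDistributions} with $\phi$. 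I would then define $C_{t}$ algebraically by
\[ C_{t}\defeq T\ast\delta_{z_{t}}-T\ast\delta_{z_{0}}+A_{t}-\tfrac{1}{2}B_{t}, \]
which by parts (1)-(2) is automatically a $\mathscr{S}'(\R^{d})$-valued regular càdlàg semimartingale. Pairing with $\phi$ and comparing with the classical formula identifies $\inner{C_{t}}{\phi}=\sum_{s\leq t}[F_{\phi}(z_{s})-F_{\phi}(z_{s-})-\nabla F_{\phi}(z_{s-})\Delta z_{s}]$, which is exactly the pairing with $\phi$ of the explicit jump series in the statement, so by Remark \ref{remarkEqualInSemimartingale} the identity \eqref{eqItoFormulaSpaceDistributions} holds in $S^{0}(\mathscr{S}'(\R^{d}))$.

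The main technical obstacle is upgrading the $\phi$-wise identification to the claim that $C_{t}$ equals the pointwise sum of jumps \emph{as a $\mathscr{S}'(\R^{d})$-valued process of finite variation}: for each $\phi$ the sum is absolutely summable in $\R$, but one needs uniformity in $\phi$ over bounded subsets of $\mathscr{S}(\R^{d})$. I would handle this by viewing the summand $T\ast\delta_{z_{s}}-T\ast\delta_{z_{s-}}+\nabla T\ast\delta_{z_{s-}}\Delta z_{s}$ as the image under an equicontinuous family of operators of the $\mathscr{E}'(\R^{d})$-valued jump $\delta_{z_{s}}-\delta_{z_{s-}}$, so that the uniform Taylor-remainder estimate from the classical proof of Itô's formula transfers to a bound in a continuous Hilbertian seminorm on $\mathscr{S}'(\R^{d})$. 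Finally, the $\mathscr{D}'(\R^{d})$ version is obtained by replacing $\mathscr{S}(\R^{d})$ with $\mathscr{D}(\R^{d})$ throughout: Example \ref{examDiracGoodIntegrator}, Proposition \ref{propContinuousImageGoodIntegrator}, Example \ref{examVectorStochIntegRealValueSemimar}, and the continuity of convolution all remain valid in that setting.
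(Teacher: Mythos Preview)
Your proposal is correct and follows essentially the same route as the paper for parts \ref{itemConvDelta} and \ref{itemItoForm}: Example \ref{examDiracGoodIntegrator} plus Proposition \ref{propContinuousImageGoodIntegrator} for the good-integrator property, then a $\phi$-by-$\phi$ reduction to the classical It\^{o} formula combined with \eqref{eqWeakStrongCompaIntegralRealSemima}. The only substantive difference is in the treatment of $C_{t}$. The paper proves directly, via the second-order Taylor bound
\[
q(\alpha_{t}(\phi(\cdot)))\leq \tfrac{1}{2}\Bigl(\sum_{i,j}\sup_{y\in K_{\omega,t}}q(\partial^{2}_{ij}\phi(\cdot+y))\Bigr)\sum_{s\leq t}\norm{\Delta z_{s}}^{2},
\]
that the explicit jump series converges in $\mathscr{S}'(\R^{d})$ and has weak finite variation, and then invokes the regularization theorem for semimartingales. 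You instead define $C_{t}$ algebraically as $T\ast\delta_{z_{t}}-T\ast\delta_{z_{0}}+A_{t}-\tfrac{1}{2}B_{t}$, which immediately yields the regular c\`{a}dl\`{a}g semimartingale property and the finite-variation property (weakly, from the classical formula), and only afterwards identify it with the explicit series. This is a clean reordering, but as you correctly note, the identification of your algebraic $C_{t}$ with the pointwise-in-$\mathscr{S}'(\R^{d})$ jump sum still requires exactly the Taylor-remainder bound above: without it you only know $\langle C_{t},\phi\rangle=\sum_{s\leq t}[\dots]$ on a $\phi$-dependent full-measure set, and you cannot conclude that the series itself converges in $\mathscr{S}'(\R^{d})$ for a.e.\ $\omega$. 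So both approaches meet at the same estimate; yours buys the semimartingale property for free at the cost of postponing the identification. One minor correction: your appeal to Proposition \ref{propGoodIntegratorsNuclearSpace}\ref{integralIntegrableVariation} for the $B_{t}$ summands needs a localization first, since $\llangle z^{c,i},z^{c,j}\rrangle$ is only locally of integrable variation.
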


\begin{remark}
Conclusions \ref{itemStocIntegItoForm} and \ref{itemItoForm} in Theorem \ref{theoItoFormula} were obtained by \"{U}st\"{u}nel in \cite{Ustunel:1982Ito} using a different approach of stochastic integration. Our main contribution in Theorem \ref{theoItoFormula} is  
\ref{itemConvDelta} which in particular enlarges our collection of examples of $S^{0}$-good integrators. 
\end{remark}

\begin{proof}[Proof of Theorem \ref{theoItoFormula}]
Let $z=(z_{t}: t \geq 0)$ be a $\R^{d}$-valued semimartingale and let  $T \in \mathscr{S}'(\R^{d})$. 

We prove \ref{itemConvDelta}. 
By Example \ref{examDiracGoodIntegrator} we have $\delta_{z_{t}}$ is a $S^{0}$-good integrator in $\mathscr{E}'(\R^{d})$. It is well known that the mapping $S \mapsto T \ast S$ is linear continuous from $\mathscr{E}'(\R^{d})$ into 
$\mathscr{S}'(\R^{d})$ (see Theorem 30.1 in \cite{Treves}, p.316). Then by Proposition \ref{propContinuousImageGoodIntegrator} we have $ T \ast \delta_{z_{t}}$ is a $S^{0}$-good integrator in $\mathscr{S}'(\R^{d})$.  
Likewise, since $\nabla T \in \mathscr{S}'(R^{d})$ and $\nabla T  \in \mathscr{S}'(R^{d})$, again by Example \ref{examDiracGoodIntegrator} and Proposition \ref{propContinuousImageGoodIntegrator} we conclude that  $\nabla T \ast \delta_{z_{t}}$ and $\Delta T \ast \delta_{z_{t}}$ are $S^{0}$-good integrators in $\mathscr{S}'(\R^{d})$. 

Now we prove \ref{itemStocIntegItoForm}. For given $i=1, \dots ,d$ denote by $z^{i}$ the i-th component of $z$, i.e. $z_{t}=(z^{1}_{t}, \dots, z^{d}_{t})$. Also, denote $\partial/\partial x_{i}$ by $\partial_{i}$ and $\partial^{2} /\partial x_{j} \partial x_{i}$ by $\partial^{2}_{ij}$ for $i,j=1, \dots,  d$. 

Since  $\nabla T \ast \delta_{z_{t}}$ and $\Delta T \ast \delta_{z_{t}}$ are $S^{0}$-good integrators in $\mathscr{S}'(\R^{d})$, 
by Corollary 5.4 in \cite{FonsecaMora:StochInteg} we have $(\nabla T \ast \delta_{z_{t-}}: t \geq 0)$ and 
$(\Delta T \ast \delta_{z_{t-}} : t \geq 0) $ are elements of $\mathcal{P}_{loc}(\mathscr{S}'(\R^{d}))$. Therefore, by Example \ref{examVectorStochIntegRealValueSemimar} the stochastic integrals $\int_{0}^{t} \,  \nabla T \ast \delta_{z_{r-}}  \, dz_{r}$ and $ \int_{0}^{t} \, \Delta T \ast \delta_{z_{r-}} \, d \llangle  z^{c}, z^{c} \rrangle _{r}$ are $\mathscr{S}'(\R^{d})$-valued regular  c\`{a}dl\`{a}g $(\mathcal{F}_{t})$-semimartingales. Moreover, by \eqref{eqWeakStrongCompaIntegralRealSemima} we have $\Prob$-a.e. for every $\phi \in \mathscr{S}(\R^{d})$ and $t \geq 0$ that
\begin{eqnarray}
\inner{\int_{0}^{t} \,  \nabla T \ast \delta_{z_{s-}}  \, dz_{s} }{\phi} & = & \sum_{i=1}^{n} \int_{0}^{t} \,  \inner{ \partial_{i} T \ast \delta_{z_{s-}}}{\phi}  \, dz^{i}_{s} \nonumber \\
& = & - \sum_{i=1}^{n} \int_{0}^{t} \,   T \ast \delta_{z_{s-}}(\partial_{i} \phi)  \, dz^{i}_{s}  \nonumber \\
& = & - \sum_{i=1}^{n} T \left( \int_{0}^{t} \,  \partial_{i} \phi(\cdot + z_{s-})  \, dz^{i}_{s} \right),
\label{eqWeakStochIntegItoFormula}
\end{eqnarray}
and 
\begin{eqnarray}
 \inner{\int_{0}^{t} \,  \Delta T \ast \delta_{z_{s-}}  \, dz_{s} }{\phi} & = & \sum_{i,j=1}^{n} \int_{0}^{t} \,  \inner{ \partial^{2}_{ij}  T \ast \delta_{z_{s-}}}{\phi}  \,  d \llangle  z_{i}^{c}, z_{j}^{c} \rrangle _{s} \nonumber \\
 & = & \sum_{i,j=1}^{n} \int_{0}^{t} \,   T \ast \delta_{z_{s-}}(\partial^{2}_{ij} \phi)  \, d \llangle  z_{i}^{c}, z_{j}^{c} \rrangle_{s} \nonumber \\
& = & \sum_{i,j=1}^{n} T \left( \int_{0}^{t} \,   \partial^{2}_{ij} \phi ( \cdot + z_{s-})  \, d \llangle  z_{i}^{c}, z_{j}^{c} \rrangle_{s} \right). \label{eqWeakIntegCovariItoFormula}
\end{eqnarray}

Let $\Omega_{z} \subseteq \Omega$ with $\Prob(\Omega_{z})=1$ such that $\sum_{s \leq t} \norm{\Delta z_{s}}^{2}< \infty$, $\forall t >0, \omega \in \Omega_{z}$. Observe that if $\omega \in \Omega_{z}$, there is at most countably many jumps of $z(\omega)$ on $[0,t]$. We must show that $(C_{t}: t \geq 0)$ is a $\mathscr{S}'(\R^{d})$-valued process. 

First, observe that for any $\phi \in \mathscr{S}(\R^{d})$ we have
\begin{equation}\label{eqdualityCtProofItoFormula}
\inner{C_{t}}{\phi}  =  \sum_{s \leq t} T \left( \phi( \cdot + z_{s}) - \phi( \cdot + z_{s-}) +\sum_{i=1}^{d}  \partial_{i} \phi( \cdot + z_{s-}) \Delta z^{i}_{s} \right).
\end{equation}
Let 
$$\alpha_{t}(\phi(\cdot)) \defeq  \sum_{s \leq t} \left[ \phi( \cdot + z_{s}) - \phi( \cdot + z_{s-}) +\sum_{i=1}^{d}  \partial_{i} \phi( \cdot + z_{s-}) \Delta z^{i}_{s} \right] $$
By Taylor's theorem, for every continuous  seminorm $q$ on $\mathscr{S}(\R^{d})$ we have
$$ q( \alpha_{t}(\phi(\cdot))) \leq \frac{1}{2} \left(  \sum_{i,j=1}^{d} \sup_{s \leq t} q \left( \partial^{2}_{ij} \phi(\cdot+z_{s-} +\theta \Delta z_{s} )\right) \right)\sum_{s \leq t} \norm{\Delta z_{s}}^{2},  \quad \theta \in (0,1).  $$
For $\omega \in \Omega_{z}$, the set $K_{\omega,t}=\overline{\mbox{co}} \left( \{ z_{s}(\omega): s \leq t\} + \{ z_{s-}(\omega): s \leq t\} \right)$ is compact, hence
$$ q( \alpha_{t}(\phi(\cdot))(\omega)) \leq \frac{1}{2} \left(  \sum_{i,j=1}^{d} \sup_{y \in K_{\omega,t}} q \left( \partial^{2}_{ij} \phi(\cdot+y)\right) \right)\sum_{s \leq t} \norm{\Delta z_{s}(\omega)}^{2}. $$
Since for $T \in \mathscr{S}'(\R^{d})$ there exists some continuous seminorm $p$ on $\mathscr{S}(\R^{d})$ such that $T \in \mathscr{S}'(\R^{d})_{p}$, the above estimate (replacing $q$ with $p$) together with \eqref{eqdualityCtProofItoFormula} shows that  for any  $\omega \in \Omega_{z}$ and $t \geq 0$ we have 
\begin{equation*}
\abs{\inner{C_{t}(\omega)}{\phi}} \leq N_{\omega,t,T}  \varrho(\phi), \quad \forall \, \phi \in \mathscr{S}(\R^{d}), 
\end{equation*}
where $N_{\omega,t,T}= \frac{ p'(T)}{2} \sum_{s \leq t} \norm{\Delta z_{s}(\omega)}^{2} < \infty$ and 
\begin{equation}\label{eqDefiSeminormItoFormula} 
\varrho(\phi) \defeq \sum_{i,j=1}^{d} \sup_{y \in K_{\omega,t}} p \left( \partial^{2}_{ij} \phi(\cdot+y)\right), \quad \forall \, \phi \in \mathscr{S}(\R^{d}),
\end{equation}
is a continuous seminorm on $\mathscr{S}(\R^{d})$. Then by \eqref{eqDefiSeminormItoFormula} we conclude  $C_{t}(\omega) \in \mathscr{S}'(\R^{d})$ for any  $\omega \in \Omega_{z}$ and $t \geq 0$. Redefining $C_{t}(\omega)=0$ for $\omega \notin \Omega_{z}$ and $t \geq 0$, we conclude that $(C_{t}: t \geq 0)$    is a $ \mathscr{S}'(\R^{d})$-valued regular $(\mathcal{F}_{t})$-adapted process. Moreover, $(C_{t}: t \geq 0)$ is a $ \mathscr{S}'(\R^{d})$-valued semimartingale of finite variation. 

In effect, given $\phi \in \mathscr{S}(\R^{d})$, by  \eqref{eqdualityCtProofItoFormula} and similar arguments to those used above, for $\omega \in \Omega_{z}$ and $t >0$, if we choose any partition $0=t_{0} < t_{1} < \cdots < t_{m}=t$ of $[0,t]$, we have 
\begin{flalign*}
& \sum_{i=1}^{m} \abs{\inner{C_{t_{i}}}{\phi}-\inner{C_{t_{i-1}}}{\phi}} \\
& \leq  \sum_{s \leq t}  \abs{T \left( \phi( \cdot + z_{s}) - \phi( \cdot + z_{s-}) +\sum_{i=1}^{d}  \partial_{i} \phi( \cdot + z_{s-}) \Delta z^{i}_{s} \right)}  \leq N_{\omega,t,T} \varrho (\phi) < \infty. 
\end{flalign*}
Since the last line of the above inequality is independent of the choice of the partition, we have $(\inner{C_{t}}{\phi}: t \geq 0) $ is a process of finite variation. Now, since $(C_{t}: t \geq 0)$ is a regular process and $\mathscr{S}(\R^{d})$ is barrelled, by Theorem 2.10 in \cite{FonsecaMora:Existence} the distribution of each $C_{t}$ is a Radon probability measure. Then, by the regularization theorem for semimartingales (Proposition 3.12 in \cite{FonsecaMora:Semi}) we have  $(C_{t}: t \geq 0) $ has an indistinguishable  $ \mathscr{S}'(\R^{d})$-valued $(\mathcal{F}_{t})$-adapted, regular, c\`{a}dl\`{a}g version. 

To prove \ref{itemItoForm}, let  $\phi \in \mathscr{S}(\R^{d})$ and by It\^{o}'s formula for finite dimensional semimartingales we have
\begin{eqnarray*}
\phi(\cdot + z_{t}) & = & \phi(\cdot + z_{0}) + \sum_{i=1}^{d} \int_{0}^{t} \, \partial_{i} \phi(\cdot + z_{s-}) \, dz^{i}_{s}  + \frac{1}{2} \sum_{i,j=1}^{d}  \int_{0}^{t} \, \partial^{2}_{ij} \phi(\cdot + z_{s-})  \, d \llangle  z_{i}^{c}, z_{j}^{c} \rrangle _{s} \\
& {} & + \sum_{s \leq t} \left[ \phi( \cdot + z_{s}) - \phi( \cdot + z_{s-}) +\sum_{i=1}^{d}  \partial_{i} \phi( \cdot + z_{s-}) \Delta z^{i}_{s} \right]
\end{eqnarray*}
Then, if we apply $T$ to both sides of the above equality and use 
\eqref{eqWeakStochIntegItoFormula}, \eqref{eqWeakIntegCovariItoFormula} and \eqref{eqdualityCtProofItoFormula} we obtain 
\begin{eqnarray*}
\inner{T \ast \delta_{z_{t}}}{\phi} 
& = & \left\langle T \ast \delta_{z_{0}} -  \int_{0}^{t} \,  \nabla T \ast \delta_{z_{s-}}  \, dz_{s} + \frac{1}{2} \int_{0}^{t} \, \Delta T \ast \delta_{z_{s-}} \, d \llangle  z^{c}, z^{c} \rrangle _{s}  \right. \\
& {} & \left. + \sum_{s \leq t} \left[ T \ast \delta_{z_{s}} - T \ast \delta_{z_{s-}} + \nabla T \ast \delta_{z_{s-}} \Delta z_{s} \right] \, , \phi \, \right\rangle. 
\end{eqnarray*}
Since by \ref{itemConvDelta} and \ref{itemStocIntegItoForm} all the processes in the above equality are  $\mathscr{S}'(\R^{d})$-valued regular c\`{a}dl\`{a}g $(\mathcal{F}_{t})$-semimartingales, then by Proposition 2.12 in \cite{FonsecaMora:Existence} we obtain \eqref{eqItoFormulaSpaceDistributions}.

The above proof can be modified so that all the conclusions remain valid for the space $\mathscr{D}'(\R^{d})$. In effect, let $T \in \mathscr{D}'(\R^{d})$. Since the mapping $S \mapsto T \ast S$ is linear continuous from $\mathscr{E}'(\R^{d})$ into 
$\mathscr{D}'(\R^{d})$ (see Theorem 27.6 in \cite{Treves}, p.294), as before  by Example \ref{examDiracGoodIntegrator} and Proposition \ref{propContinuousImageGoodIntegrator} we conclude that $ T \ast \delta_{z_{t}}$,  $\nabla T \ast \delta_{z_{t}}$ and $\Delta T \ast \delta_{z_{t}}$ are $S^{0}$-good integrators in $\mathscr{D}'(\R^{d})$. Likewise, by Example \ref{examVectorStochIntegRealValueSemimar} and 
Corollary 5.4 in \cite{FonsecaMora:StochInteg} we can show $\int_{0}^{t} \,  \nabla T \ast \delta_{z_{r-}}  \, dz_{r}$ and $ \int_{0}^{t} \, \Delta T \ast \delta_{z_{r-}} \, d \llangle  z^{c}, z^{c} \rrangle _{r}$ are $\mathscr{D}'(\R^{d})$-valued regular c\`{a}dl\`{a}g $(\mathcal{F}_{t})$-semimartingales and that \eqref{eqWeakStochIntegItoFormula}, \eqref{eqWeakIntegCovariItoFormula} remain valid. Moreover, the proof that $(C_{t}: t \geq 0)$    is a $ \mathscr{D}'(\R^{d})$-valued regular $(\mathcal{F}_{t})$-semimartingale of finite variation remain the same since for every continous seminorm $p$ on $ \mathscr{D}(\R^{d})$ we have  \eqref{eqDefiSeminormItoFormula} defines a continuous seminorm on $ \mathscr{D}(\R^{d})$. 

Finally, now that we have  \ref{itemConvDelta} and \ref{itemStocIntegItoForm} are valid for the space $\mathscr{D}'(\R^{d})$, the proof of \ref{itemItoForm} remains the same for $\mathscr{D}'(\R^{d})$-valued semimartingales. 
\end{proof}

\textbf{Acknowledgements} {  This work was supported by The University of Costa Rica through the grant ``821-C2-132- Procesos cil\'{i}ndricos y ecuaciones diferenciales estoc\'{a}sticas''.}

\end{document}